\newtheorem{thm}{Theorem}[section]
\newtheorem{lem}{Lemma}[section]
\newtheorem{cor}{Corollary}[section]
\newtheorem{prop}{Proposition}[section]
\newtheorem{rem}{Remark}[section]
\theoremstyle{definition}
\begin{document}
\numberwithin{equation}{section}

 \title[Decomposition theorems  for    Hardy spaces  on     products  ]{Decomposition theorems  for    Hardy spaces  on     products of  Siegel upper half spaces and  bi-parameter  Hardy spaces}
\author {Wei Wang${}^\dag$ and Qingyan Wu${}^\ddag$}
\thanks{$  \dag$ Department of Mathematics, Zhejiang University, Zhejiang 310027, China, Email: wwang@zju.edu.cn;}
\thanks{
$ \ddag$ Department of Mathematics,
         Linyi University,
         Shandong  276005, China, Email: qingyanwu@gmail.com}

\begin{abstract}
Products of  Siegel upper half spaces are  Siegel domains, whose Silov boundaries have the structure of products $\mathscr H_1\times\mathscr H_2$ of   Heisenberg groups. By the reproducing formula of  bi-parameter  heat kernel associated to sub-Laplacians, we show that a function in holomorphic Hardy space $H^1$ on such a  domain has boundary value belonging to bi-parameter Hardy space $  H^1 (\mathscr H_1\times \mathscr H_2)$. With the help of atomic decomposition of $  H^1 (\mathscr H_1\times \mathscr H_2)$ and bi-parameter
harmonic analysis, we show that the Cauchy-Szeg\H o projection is a bounded operator from $  H^1 (\mathscr H_1\times \mathscr H_2)$ to  holomorphic Hardy space $H^1$, and any  holomorphic $H^1$ function can be   decomposed as a sum of holomorphic atoms. Bi-parameter atoms on $\mathscr H_1\times\mathscr H_2$ are   more complicated  than $1$-parameter   ones, and so are holomorphic atoms.
\end{abstract}
\keywords{holomorphic Hardy space;  atomic decomposition theorems;     products of  Siegel upper half spaces;   bi-parameter  Hardy space;
   bi-parameter  heat kernel; products   of   Heisenberg groups.   }

 \maketitle

\section{Introduction }
Coifman-Rochberg-Weiss \cite{CRW} proved the atomic
decomposition theorem for holomorphic Hardy space $H^1 $ over  the unit
ball in  $ \mathbb{C}^n$.
Garnett-Latter  \cite{GL}  generalized their results to the case $H^p $ for $0 < p< 1$.
Atomic decomposition of holomorphic  $H^p$ functions on   bounded strongly pseudoconvex domains,  pseudoconvex domains of finite type in
$\mathbb{C}^2$  and     convex domains of finite type in
$\mathbb{C}^n$ were
established by Dafni \cite{Da},  Krantz-Li \cite{KL1} \cite{KL2},
Grellier-Peloso  \cite{GP}. Decomposition theorems   of holomorphic  Hardy spaces  have various interesting applications (cf. e.g. \cite{BPS} \cite{KL2} \cite{PV}).

On the other hand, although the bidisc is a simple  Siegel domain   with non-smooth boundary,
boundary behavior of holomorphic functions and  holomorphic
Hardy space  on it  were known to be much more complicated than that on the disc in the late 1970s by Malliavins \cite{MM} and Gundy-Stein \cite{GuS}. It has stimulated  the development of multi-parameter  harmonic
analysis
 since then (cf. e.g. \cite{CF80} \cite{CF85}  \cite{J} \cite{P}). Notably, the definition of a multi-parameter  atom     is more complicated than that of one parameter.  Since a    Siegel
domain  usually has a group of automorphisms including multi-parameter
dilations, it is reasonable to believe that  multi-parameter  harmonic analysis   on  Silov boundaries will  play  an important
role in understanding
boundary behavior of holomorphic functions  and Hardy spaces on such domains.
In this paper,  we consider   products of   Siegel upper half spaces and establish   decomposition theorems  for holomorphic   Hardy spaces on such domains, with the help of
atomic decompositions of  bi-parameter  Hardy spaces   on   products of    Heisenberg groups.

The product of two Siegel upper half spaces is
$  {\mathcal U}:=  {\mathcal U}_1\times   {\mathcal U}_2$, where
\begin{equation}\label{eq:U-alpha}
    {\mathcal U}_\alpha=\left\{(\widetilde  w_\alpha, \widetilde   {\mathbf{z}}_\alpha)\in \mathbb{C} \times\mathbb{C}^{n_\alpha } ;\rho_\alpha(\widetilde
    w_\alpha,  \widetilde
    { \mathbf z}_\alpha):=\operatorname{Im}\widetilde  {w}_\alpha-|\widetilde  {\mathbf z}_\alpha|^2>0 \right\}
 ,\qquad \alpha=1,2,
\end{equation}  are Siegel upper half spaces.
The Silov boundary of $  {\mathcal U}$    is the CR submanifold  defined by $\rho_1=\rho_2=0$.
It is convenient to consider its flat model $\mathscr U:=\mathbb{R}_+^2\times\mathscr H_1\times\mathscr H_2$, where $\mathscr H_\alpha$ is the Heisenberg group,
$\alpha=1,2$. There exists a   quadratic transformation $\pi$ from $
\mathscr U$ to $   {\mathcal U}$. We consider holomorphic  functions on $\mathscr U$   defined in terms of the pulling-back complex structure by $\pi$ (cf. Proposition
\ref{prop:regular-equiv}).
  {\it Holomorphic Hardy space} $H^p(\mathscr U)$ is the space of all holomorphic functions $f $ on $ \mathscr U$ such that
\begin{equation*}
   \|f\|_{H^p(\mathscr U)} :=\left(\sup_{ \mathbf{t} \in \mathbb{R}_+^2}\int_{ \mathbb{R}^2\times\mathbb{C}^{n_1 +n_2}}\left|f\left( \mathbf{s}  +\mathbf{i}
   \mathbf{ t}  , {\mathbf z}_1,  {\mathbf z}_2\right )\right|^pd\mathbf{s} d {\mathbf z}\right)^{\frac 1p}<\infty.
\end{equation*}

The Silov boundary is the product $\mathscr H_1\times\mathscr H_2$ of   Heisenberg groups
with  bi-parameter dilation group.  Recently, the theory of Hardy
spaces
    has been developed on products   of
spaces of homogeneous type \cite{CDLWY} \cite{CFLY}
 \cite{HLL} \cite{HLPW} \cite{HLW}, which include   products of    Heisenberg groups as   special cases.  We need   atomic decomposition of
bi-parameter  Hardy space $  H^1 (\mathscr H_1\times \mathscr H_2)$.
  Let  $\triangle_{\alpha}$ be the sub-Laplacian on $ \mathscr H_\alpha$ and $N$ be a positive integer.
  A function $a \in L^2(\mathscr H_1\times \mathscr H_2)$
 is called  a  {\it $(  2,N)$-atom} if it satisfies the following conditions:
\\
(1) there is an open set $\Omega$ in $\mathscr H_1\times \mathscr H_2$ with finite measure such that ${\rm supp}\, a\subset \Omega$;
\\
(2) $a$ can be further decomposed as
\begin{equation*}
   a=\sum_{R\in m(\Omega)} a_R
\end{equation*}
where $ m(\Omega)$ is the set of all maximal dyadic rectangles contained in $\Omega$, and for each
$R\in m(\Omega)$,  there exists a function $b_R$  belonging
to the domain of $\triangle_1^{ \sigma_1} \otimes \triangle_2^{ \sigma_2} $ in $L^2(\mathscr H_1\times \mathscr H_2)$ for all $\sigma_1, \sigma_2 \in\{0, 1,\ldots,N\}  $,   such
that
\\
(i) $a_R=(\triangle_1^N \otimes \triangle_2^N )b_R$;
\\
(ii)  supp $(\triangle_1^{ \sigma_1} \otimes \triangle_2^{ \sigma_2})b_R\subset  \overline{{C }}R$, where $ \overline{{C }}>1$ is a fixed constant;
\\
(iii) $\|a\|_{L^2(\mathscr H_1\times\mathscr H_2)}\leq |\Omega|^{-\frac 12}$ and
\begin{equation}\label{eq:atom}
   \sum_{R=I\times J\in m(\Omega)}\ell(I)^{4\sigma_1-4N }\ell(J)^{4 \sigma_2-4N }\| ( \triangle_1 ^{\sigma_1} \otimes   \triangle_2 ^{\sigma_2}) b_R\|^2_{L^2(\mathscr H )}\leq
   |\Omega|^{-1}.
\end{equation}

Let $\mathcal P  $  be the    Cauchy-Szeg\H o
projection from $  L^2 (\mathscr H_1\times \mathscr H_2)$ to   holomorphic Hardy space $H^2(\mathscr U )$.
A holomorphic  function $A$ on $\mathscr U $ is called a {\it holomorphic  $(2,N)$-atom} if
there exists a $(2,N)$-atom  $a$ on $\mathscr H_1\times\mathscr H_2$
  such that $A = \mathcal{P}(a)  $.
  {\it Atomic  holomorphic Hardy space} $H^1_{at, N}(\mathscr U)$ is the space of all holomorphic    functions of the form
 $\sum_{j=1}^\infty\lambda_j A_j$ with $\lambda_j\in \mathbb{C},$ $\sum_{j=1}^\infty |\lambda_j| <+\infty,$
 where each $A_j$ is a holomorphic    $(2,N)$-atom and  such a series converges to a holomorphic   function. Moreover, the norm   of $f\in H^1_{at, N}(\mathscr U)$ is the
 infimum of $ \sum_{j=1}^\infty |\lambda_j|  $ taken over all possible decomposition of $f$.

We have the following characterization of   holomorphic Hardy space $  H^1(\mathscr U )$.

\begin{thm}\label{thm:atom} For $N>\max\{n_1+1, n_2+1\}/2$,
$H^1_{at, N}(\mathscr U ){=} H^1(\mathscr U )$  and they have equivalent  norms.
\end{thm}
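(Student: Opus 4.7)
The plan is to prove the equivalence by combining three separately established ingredients from the paper: (i) the boundary-value map $f\mapsto f_b$ sending $H^1(\mathscr U)$ into $H^1(\mathscr H_1\times\mathscr H_2)$ (obtained via the reproducing formula of the bi-parameter heat kernel), (ii) the bi-parameter atomic decomposition of $H^1(\mathscr H_1\times\mathscr H_2)$ via $(2,N)$-atoms, and (iii) the boundedness of the Cauchy-Szeg\H o projection $\mathcal P\colon H^1(\mathscr H_1\times\mathscr H_2)\to H^1(\mathscr U)$. The projection $\mathcal P$ plays the role of a bridge between boundary data and holomorphic interior functions, and once (i)--(iii) are in hand the two inclusions become a formal assembly.

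For the easy inclusion $H^1_{at,N}(\mathscr U)\hookrightarrow H^1(\mathscr U)$, I note that every $(2,N)$-atom $a$ is, in particular, an atom for the bi-parameter Hardy space on the product $\mathscr H_1\times\mathscr H_2$, so $\|a\|_{H^1(\mathscr H_1\times\mathscr H_2)}\le C$ uniformly. By (iii), $\|A\|_{H^1(\mathscr U)}=\|\mathcal P(a)\|_{H^1(\mathscr U)}\le C'$ uniformly over all holomorphic $(2,N)$-atoms. Given $f=\sum\lambda_jA_j$ with $\sum|\lambda_j|<\infty$, the partial sums form a Cauchy sequence in $H^1(\mathscr U)$, so the series converges there with $\|f\|_{H^1(\mathscr U)}\le C'\sum|\lambda_j|$; taking the infimum over representations of $f$ yields $\|f\|_{H^1(\mathscr U)}\le C'\|f\|_{H^1_{at,N}(\mathscr U)}$.

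For the reverse inclusion $H^1(\mathscr U)\hookrightarrow H^1_{at,N}(\mathscr U)$, given $f\in H^1(\mathscr U)$ I apply (i) to obtain $f_b\in H^1(\mathscr H_1\times\mathscr H_2)$ with $\|f_b\|_{H^1(\mathscr H)}\le C\|f\|_{H^1(\mathscr U)}$, and then (ii) to write $f_b=\sum\lambda_ja_j$ with $\sum|\lambda_j|\le C'\|f_b\|_{H^1(\mathscr H)}$. Applying $\mathcal P$ term-by-term, whose legitimacy follows from (iii) together with the preceding paragraph, produces $\mathcal P(f_b)=\sum\lambda_j\mathcal P(a_j)=\sum\lambda_jA_j$ as an $H^1(\mathscr U)$-convergent series of holomorphic $(2,N)$-atoms. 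It remains to establish the reproducing identity $f=\mathcal P(f_b)$; this follows from the orthogonal-projection identity $\mathcal P g_b=g_b$ valid for $g\in H^2(\mathscr U)$ (applied to the regularizations $f(\,\cdot+\mathbf{i}\mathbf{t},\cdot)\in H^2(\mathscr U)$) followed by passage to the limit $\mathbf{t}\to 0$ using the heat-kernel reproducing formula already exploited in (i). Combining the three steps delivers $\|f\|_{H^1_{at,N}(\mathscr U)}\le CC'\|f\|_{H^1(\mathscr U)}$.

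The main obstacle within the proof of Theorem \ref{thm:atom} itself is the verification of the reproducing identity $f=\mathcal P(f_b)$ in the $H^1$-setting, which bypasses direct $L^2$ orthogonality and must be produced by approximating $f$ by regularizations in $H^2\cap H^1$ and then invoking the $H^1\to H^1$ continuity of $\mathcal P$ to pass to the limit; one must check, in particular, that the limiting boundary value is indeed $f_b$ in the $H^1$-norm. The deeper analytic burden of the theorem sits in the separately established (iii): pointwise control of $(\triangle_1^{\sigma_1}\otimes\triangle_2^{\sigma_2})K_{\mathrm{CS}}$ off the diagonal, a Journ\'e-type bi-parameter covering lemma taming the enlargement of an arbitrary open set $\Omega$, and the explanation of the restriction $N>\max\{n_1+1,n_2+1\}/2$ as the minimal regularity exponent making the far-field kernel estimates summable in $L^1$ over maximal dyadic rectangles, with $2N$ required to exceed $n_\alpha+1$ in each parameter so that Cauchy-Schwarz against the $L^2$-atomic bound \eqref{eq:atom} closes the argument.
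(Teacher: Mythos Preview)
Your outline is correct and matches the paper's proof in Section~5.3: the same three ingredients (boundary value in $H^1(\mathscr H_1\times\mathscr H_2)$, atomic decomposition on the boundary, boundedness of $\mathcal P$), assembled via the regularizations $f(\boldsymbol{\varepsilon},\cdot)\in L^2$ and a pointwise limit $\boldsymbol{\varepsilon}\to 0$. The paper does not establish or use $H^1$-norm convergence of the boundary values; instead it relies on convergence in $\mathcal S'$ together with continuity of the point-evaluation $F\mapsto\mathcal P(F)(\mathbf t,\mathbf g)$ on $L^2$, plus a uniform-in-$\boldsymbol{\varepsilon}$ tail estimate on the series $\sum_k\lambda_k\mathcal P(h_{\boldsymbol{\varepsilon}}*a_k)$, so your remark that ``the limiting boundary value is $f_b$ in the $H^1$-norm'' is not the route actually taken.

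One correction to your final paragraph: the hypothesis $N>\max\{n_1+1,n_2+1\}/2=\max\{Q_1,Q_2\}/4$ is \emph{not} what makes the Cauchy--Szeg\H{o} estimates close. The kernel bounds in Proposition~\ref{prop:K-estimate} involve heat-semigroup differences $K-K\circ e^{-\tau^2\triangle_\alpha}$ rather than powers $\triangle_\alpha^{\sigma}$ applied to the kernel, and they carry no $N$ at all; the proof of Theorem~\ref{prop0} then invokes the atom condition \eqref{eq:atom} only for $\sigma_\alpha\in\{N-1,N\}$, so any $N\ge 1$ suffices for ingredient (iii). The lower bound on $N$ enters solely through ingredient (ii), Theorem~\ref{thm:atom-H} (quoted from \cite{CDLWY} and \cite{CFLY}): it is the minimal cancellation order needed for $(2,N)$-atoms to span all of $H^1(\mathscr H_1\times\mathscr H_2)$, not a requirement coming from the holomorphic side.
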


Holomorphic Hardy space $  H^1 $ on the   Siegel upper half space
was studied by
 Geller \cite{Gel} by using the Beltrami-Laplace operator on   complex hyperbolic  space.   The solution
 formula of  the corresponding  Dirichlet problem reproduces  holomorphic   functions,  playing the role of the Poisson integral in the Euclidean case,  and  can be used to prove     boundary value of a
 holomorphic $H^1$
 function    belonging to the Hardy space $H^1$ on the Heisenberg group \cite{Gel}.  This Dirichlet problem was  generalized by  Graham \cite{Gra} to some modifications of  the Beltrami-Laplace operator.

On the other hand,
Folland-Stein \cite{FS} used the heat kernel to establish the theory of  Hardy spaces  $H^p$ on
 homogeneous groups. As a convenient tool, the heat kernel  has the advantage   that it is a Schwartzian function on a
 homogeneous group such that its convolution
 with a distribution is well defined.
By   identifying the Siegel upper half space $  {\mathcal U}_\alpha$ with $\mathscr U_\alpha=\mathbb{R}_+\times\mathscr H_\alpha $,  we show that a holomorphic
function on $\mathscr U_\alpha $  satisfies    the heat equation associated with the {sub-Laplacian} on the   Heisenberg group.
This phenomenon  was first
observed in \cite{CDLWW}, where    quaternionic regular functions of several variables were proved to satisfy a heat equation on the   flat model   of quaternionic
Siegel upper half space.

\begin{prop}  \label{prop:regular-sublap} A  function $f $ holomorphic on $ \mathscr U=\mathbb{R}_+^2\times\mathscr H_1\times\mathscr H_2$    satisfies the   heat
equations
\begin{equation}\label{eq:regular-sublap}
   \left (\frac \partial{\partial t_\alpha }+  \triangle_{\alpha}\right)f=0,\qquad \alpha=1,2.
\end{equation}
\end{prop}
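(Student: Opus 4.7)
The plan is to reduce the bi-parameter statement to its one-factor analogue and then verify the latter by the chain rule and the Cauchy--Riemann equations. By Proposition~\ref{prop:regular-equiv}, holomorphy on $\mathscr U$ (in the pulled-back complex structure) is the conjunction of holomorphy in $(\widetilde w_1,\widetilde{\mathbf z}_1)$ and holomorphy in $(\widetilde w_2,\widetilde{\mathbf z}_2)$, while the operator $\partial_{t_\alpha}+\triangle_\alpha$ depends only on the coordinates of the $\alpha$-th factor. It therefore suffices to prove the corresponding statement on $\mathbb R_+\times\mathscr H_\alpha$: if $f$ is holomorphic on $\mathcal U_\alpha$, then $F:=f\circ\pi$ satisfies $(\partial_{t_\alpha}+\triangle_\alpha)F=0$. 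I drop the subscript $\alpha$ in what follows.

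With $\pi(t,s,\mathbf z)=(\widetilde w,\widetilde{\mathbf z})$, $\widetilde w=s+i(t+|\mathbf z|^2)$, $\widetilde{\mathbf z}=\mathbf z$, I apply the Wirtinger chain rule to $F=f\circ\pi$ and use the Cauchy--Riemann equations $\partial_{\overline{\widetilde w}}f=0$ and $\partial_{\overline{\widetilde z_j}}f=0$. Two identities drop out. First, a scalar identity relating the $t$-derivative to a holomorphic derivative of $f$:
\begin{equation*}
\partial_t F = i\,\partial_{\widetilde w} f.
\end{equation*}
Second, a tangential CR identity: the left-invariant antiholomorphic vector fields $\overline Z_j:=\partial_{\overline{z_j}}-iz_j\partial_s$ on $\mathscr H$ annihilate $F$, i.e.\ $\overline Z_j F=0$ for every $j$; dually one computes $Z_jF=\partial_{\widetilde z_j}f+2i\overline{z_j}\partial_{\widetilde w}f$.

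The sub-Laplacian decomposes, up to the scalar normalisation fixed in the paper's definition of $\triangle$, as a combination of $\overline Z_j Z_j$ and $Z_j\overline Z_j$. Because $\overline Z_j F=0$, every $Z_j\overline Z_j$ summand is killed, so the action of $\triangle$ on $F$ reduces to $\sum_j\overline Z_j Z_j F$. Expanding $\overline Z_j(Z_jF)$ via the chain rule and invoking the Cauchy--Riemann equations once more (so that every mixed derivative of the form $\partial_{\overline{\widetilde w}}\partial_*f$ or $\partial_{\overline{\widetilde z_j}}\partial_*f$ vanishes), the second-order term $|z_j|^2\,\partial_{\widetilde w}^2 f$ and the cross term $z_j\,\partial_{\widetilde w}\partial_{\widetilde z_j}f$ cancel in pairs, leaving the clean identity $\overline Z_jZ_jF=2i\,\partial_{\widetilde w}f$. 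Summing over $j$ and matching against $\partial_tF=i\,\partial_{\widetilde w}f$ yields $\triangle F=-\partial_t F$, i.e.\ $(\partial_t+\triangle)F=0$.

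The main (and essentially only) obstacle is bookkeeping: the Wirtinger chain rule for $\pi$, the explicit form of the left-invariant vector fields $Z_j,\overline Z_j$ on $\mathscr H$, and the precise constant in the paper's definition of $\triangle$ must all be set in parallel so that the numerical constants line up in the final step. Once these conventions are aligned, the cancellations are purely algebraic, and the bi-parameter conclusion \eqref{eq:regular-sublap} then follows without extra work by applying the single-factor identity in $\alpha=1$ at fixed second-factor coordinates and, symmetrically, in $\alpha=2$.
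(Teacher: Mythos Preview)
Your proposal is correct, and the underlying mechanism is the same as the paper's: both arguments kill the $Z_j\overline Z_j$ part of the sub-Laplacian using $\overline Z_j f=0$ from Proposition~\ref{prop:regular-equiv}, and the remaining piece is matched against $\partial_{t_\alpha}f$. The execution differs, however. The paper stays entirely on the flat model $\mathscr U$: it writes
\[
\sum_{j=1}^{2n_\alpha}X_{\alpha j}^2
=4\sum_{j=1}^{n_\alpha}Z_{\alpha j}\overline Z_{\alpha j}+4n_\alpha\,\mathbf i\,\partial_{s_\alpha}
\]
via the bracket $[X_{\alpha j},X_{\alpha(n_\alpha+j)}]=-4\partial_{s_\alpha}$, so that $4n_\alpha\mathcal L_\alpha f=4\sum Z_{\alpha j}\overline Z_{\alpha j}f+8n_\alpha\mathbf i\,\partial_{\overline w_\alpha}f$, and both terms vanish directly by the intrinsic holomorphy conditions $\overline Z_{\alpha j}f=0$, $\partial_{\overline w_\alpha}f=0$. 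You instead push back to $\mathcal U_\alpha$ through $\pi_\alpha$ and compute $\overline Z_jZ_jF$ by the Wirtinger chain rule, obtaining $2\mathbf i\,\partial_{\widetilde w}f$ after the cancellations you describe. That computation is correct, but it is exactly what the commutator $[\overline Z_j,Z_j]=2\mathbf i\,\partial_s$ gives intrinsically: $\overline Z_jZ_jF=Z_j\overline Z_jF+2\mathbf i\,\partial_sF=2\mathbf i\,\partial_sF$, and then $\partial_{\overline w}F=0$ converts $\partial_sF$ to $-\mathbf i\,\partial_tF$. The paper's route is four lines and avoids the chain-rule bookkeeping; your route is more explicit and makes the link to the ambient complex structure on $\mathcal U_\alpha$ visible.
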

Bi-parameter  heat kernel reproduces a holomorphic  $H^1$ function on $\mathscr U$, if it is continuous on $\overline{\mathscr U}$ (cf. Proposition \ref{prop:heat-kernel-H1}). This reproducing formula is   more simple and convenient than the solution
 formula  used by
 Geller \cite{Gel}.

If $u$ is a continuous function on $\mathscr U $, define   {\it maximal function} $u  ^*$ on $\mathscr H_1\times \mathscr H_2$ by
 \begin{equation}\label{eq:nontangential}
    u  ^*(\mathbf{g}) := \sup\limits_{(\mathbf{t},\mathbf{ h})\in   \Gamma_{\mathbf{g }} }\left|u(\mathbf{t},\mathbf{ h}) \right|,
 \end{equation}where $\Gamma_{\mathbf{g }}$ is the  {\it non-tangential region} at point $\mathbf{g}=(\mathbf{g }_1,\mathbf{g }_2)\in \mathscr H_1 \times\mathscr H_2$ defined by
\begin{equation}\label{eq:region}
   \Gamma_{\mathbf{g }}:=\left\{(\mathbf{t},\mathbf{h})\in\mathbb{R}_+^2\times\mathscr H_1 \times\mathscr H_2; \|\mathbf{h}_1 ^{-1}  \mathbf{g}_1\|_1^2  <t_1,
   \|\mathbf{h}_2 ^{-1}  \mathbf{g}_2\|_2^2  <t_2\right\}.
\end{equation}
  {\it Bi-parameter  Hardy space $  H^p (\mathscr H_1\times \mathscr H_2)$}
  consists of
  $g\in \mathcal{S}'(\mathscr H_1\times \mathscr H_2)$ such that $ u  ^*\in L^p(\mathscr H_1\times \mathscr H_2)$,  where $u(\mathbf{t},\mathbf{ g})=
  h_\mathbf{t}*g(\mathbf{g})$  and $h_\mathbf{t}$ is the bi-parameter  heat kernel of $e^{-t_1\triangle_1}e^{-t_2\triangle_2}$ with $\mathbf{t}=(t_1,t_2)\in\mathbb{R}_+^2$ and $\mathbf{ g}\in\mathscr H_1\times \mathscr H_2$. It has norm $\|g\|_{H^p (\mathscr H_1\times \mathscr H_2)}=\|u  ^*\|_{L^p  (\mathscr H_1\times \mathscr H_2)}$.

A parabolic version of subharmonicity allows us to show
\begin{prop}  \label{prop:heat-kernel-Hp}  Suppose $f\in H^1(\mathscr U )$   and continuous on $\overline{\mathscr U }$.  Then for $0 < q\leq 1  $, we have
  \begin{equation}\label{eq:heat-rep-p}
   | f ( \mathbf{t},\mathbf{g} )|^q \leq \int_{\mathscr H_1\times \mathscr H_2 }h_\mathbf{t}(\mathbf{h}{}^{-1}  \mathbf{g})|f(\mathbf{0},\mathbf{h}
   )|^qd\mathbf{h}.
 \end{equation}
\end{prop}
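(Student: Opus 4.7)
The plan is to reduce (\ref{eq:heat-rep-p}) to an iteration of two one-parameter heat-kernel majorizations, each obtained by showing that $|f|^q$ is a parabolic subsolution of the sub-Laplacian heat equation in the corresponding slot. Since the bi-parameter heat kernel factorizes as $h_\mathbf{t}(\mathbf{h}^{-1}\mathbf{g})=h_{t_1}(\mathbf{h}_1^{-1}\mathbf{g}_1)\,h_{t_2}(\mathbf{h}_2^{-1}\mathbf{g}_2)$, it is enough to prove, for each $\alpha=1,2$, the one-slot inequality
\[
|f(\mathbf{t},\mathbf{g})|^q \;\leq\; \int_{\mathscr H_\alpha} h_{t_\alpha}\!\left(\mathbf{h}_\alpha^{-1}\mathbf{g}_\alpha\right)\bigl|f(\ldots,0,\mathbf{h}_\alpha,\ldots)\bigr|^q\, d\mathbf{h}_\alpha,
\]
where the $0$ is inserted in the $\alpha$-th temporal slot and all other variables are held fixed. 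Applying this first with $\alpha=1$ and then, under the resulting integral over $\mathscr H_1$, with $\alpha=2$ to the function $(t_2,\mathbf{g}_2)\mapsto f(0,t_2,\mathbf{h}_1,\mathbf{g}_2)$ (which is itself holomorphic on $\mathscr U_2$), and invoking Fubini, produces (\ref{eq:heat-rep-p}).

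The one-slot subsolution property $(\partial_{t_\alpha}+\triangle_\alpha)|f|^q\leq 0$ rests on the intermediate identity $(\partial_{t_\alpha}+\triangle_\alpha)\log|f|=0$ on $\{f\neq 0\}$. Writing $\triangle_\alpha$ as a negative sum of squares of real horizontal vector fields on $\mathscr H_\alpha$ paired into complex frames $Z_j^{(\alpha)},\bar Z_j^{(\alpha)}$, direct expansion yields
\[
(\partial_{t_\alpha}+\triangle_\alpha)\log|f|
\;=\;\mathrm{Re}\,\frac{(\partial_{t_\alpha}+\triangle_\alpha)f}{f}
\;+\;\mathrm{Re}\sum_j\frac{(Z_j^{(\alpha)}f)\,(\bar Z_j^{(\alpha)}f)}{f^2}.
\]
The first summand is killed by Proposition \ref{prop:regular-sublap}. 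The second vanishes because pulling back the complex structure of $\mathcal U_\alpha$ through the quadratic map $\pi$ (cf.\ Proposition \ref{prop:regular-equiv}) forces the tangential CR equations $\bar Z_j^{(\alpha)} f = 0$. Composing the parabolic-harmonic $\log|f|$ with the convex function $x\mapsto e^{qx}$ and applying the chain rule gives
\[
(\partial_{t_\alpha}+\triangle_\alpha)|f|^q
\;=\;-\,q^2\,|f|^q\sum_j\bigl((X_j^{(\alpha)}\log|f|)^2+(Y_j^{(\alpha)}\log|f|)^2\bigr)\;\leq\;0,
\]
with $X_j^{(\alpha)},Y_j^{(\alpha)}$ the real and imaginary parts of $Z_j^{(\alpha)}$. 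The zero set of $f$ is handled by the standard regularization $\log|f|\mapsto\tfrac12\log(|f|^2+\varepsilon)$ followed by $\varepsilon\downarrow 0$.

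With the subsolution inequality in hand, the one-slot heat-kernel majorization is obtained from a parabolic maximum principle applied to the difference between $|f|^q$ and the heat extension $h_{t_\alpha}*|f(0,\cdot)|^q$ in slot $\alpha$: this difference is continuous up to $\{t_\alpha=0\}$ where it vanishes by continuity of $f$ on $\overline{\mathscr U}$, satisfies the subsolution inequality in the interior, and decays at spatial infinity through the $H^1(\mathscr U)$-control on slices---hence it is nonpositive. The step I expect to be the main technical obstacle is not the formal subsolution identity but justifying this maximum principle \emph{uniformly} in the other temporal variable, so that the iteration across the two parameters is legitimized by Fubini; this is where the combination of continuity on $\overline{\mathscr U}$ and $H^1$-control on slices---rather than mere pointwise holomorphy---becomes essential.
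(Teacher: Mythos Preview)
Your proposal is correct in outline and follows the paper's overall strategy (parabolic subharmonicity of $|f|^q$ plus an iterated maximum principle), but it diverges from the paper's implementation in two places worth noting.

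For the subsolution step, you derive $(\partial_{t_\alpha}+\triangle_\alpha)\log|f|=0$ directly from the tangential CR equations $\bar Z_{\alpha j}f=0$ and then compose with $e^{qx}$.  This is cleaner than the paper's Proposition~\ref{prop:sub}, which pulls back to $\mathcal U_\alpha$ via $\pi_\alpha$ and invokes ordinary plurisubharmonicity of $|F|^p$ to handle the range $0<q<1$.  Your $\varepsilon$-regularization of the zero set also works once the CR equations are used (one checks $\mathcal L_\alpha(|f|^2+\varepsilon)^{q/2}\ge 0$ using $\bar Z_{\alpha j}f=0$); the paper, by contrast, explicitly says the Stein-style auxiliary-function route is ``not easy'' here and instead observes inside the maximum-principle argument that the touching point cannot occur on $\{f=0\}$ because the heat extension of $|f(\mathbf 0,\cdot)|^q$ is nonnegative.

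For the iteration, the paper does \emph{not} apply the one-slot inequality to the raw $f$ as you propose.  It first replaces $f$ by $f_k:=f(\cdot+\boldsymbol\varepsilon_k,\cdot)$, compares $|f_k|^q-\kappa t_1-\kappa t_2$ against the \emph{full} bi-parameter heat extension $h_{\mathbf t}*|f_k(\mathbf 0,\cdot)|^q$, and runs the maximum principle first at $t_2=0$ in $(t_1,\mathbf g_1)$ and then at fixed $(t_1,\mathbf g_1)$ in $(t_2,\mathbf g_2)$, finally passing to the limit $k\to\infty$ via weak compactness.  Your scheme instead applies slot~$2$ to the boundary slice $(t_2,\mathbf g_2)\mapsto f(0,t_2,\mathbf h_1,\mathbf g_2)$.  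Holomorphicity of that slice does follow from a locally-uniform limiting argument, but the decay at spatial infinity needed to run the maximum principle on it is not delivered by the phrase ``$H^1$-control on slices'': there is no a priori $H^1(\mathscr U_2)$ or $L^1$ bound on the slice at $t_1=0$.  The $\boldsymbol\varepsilon_k$-shift in the paper is exactly what manufactures the required decay (via an averaging estimate of the type in the proof of Lemma~\ref{lem:FS}), so if you keep your iteration you will need to insert it as well.
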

 We need the above inequality \eqref{eq:heat-rep-p} for $q<1$ to show that  a holomorphic  $H^1$ function $f$ on $\mathscr U $ satisfies
\begin{equation}\label{eq:nontangential-max}
     f^* \in L^1(\mathscr H_1\times \mathscr H_2).
 \end{equation}This is the most subtle part   as in the classical  theory of   Hardy spaces   for functions annihilated by   generalized
 Cauchy-Riemann operator on $\mathbb{R}_+^{n+1}$ (cf. \cite[Section 3 in Chapter 7]{St70}). Once \eqref{eq:nontangential-max} is proved, the method of
bi-parameter  harmonic analysis  can be applied.
We can show that there exists a boundary distribution $f^b\in \mathcal{S}'(\mathscr
 H_1\times \mathscr H_2)$ in the sense    $f(\varepsilon _1, \varepsilon_2,\cdot)\rightarrow f^b$ in $ \mathcal{S}'(\mathscr
 H_1\times \mathscr H_2
 )$  as $\varepsilon _1, \varepsilon _2\rightarrow 0$,  and
 \begin{equation*}
    f(\mathbf{t}, \mathbf{g})=h_\mathbf{t}* f^b(\mathbf{g})
 \end{equation*}
  in Theorem \ref{thm:boundary-distribution}. Therefore, $f^b\in  H^1
 (\mathscr
 H_1\times \mathscr H_2)$.  Then, we deduce that $\mathcal{P}(a)$ is a  holomorphic  $ H^1(\mathscr U )$ function for each
 boundary $(  2,N)$-atom $a$
 and  the    Cauchy-Szeg\H o projection $\mathcal{P} $  is bounded  from $H^1( \mathscr  H_1\times \mathscr H_2)$  to $H^1(\mathscr U)$. The boundary distribution $f^b$ has  an atomic decomposition
$f^b=
    \sum_{k }  \lambda_k a_k
$ with $\|f^b\| _{H ^1(\mathscr H ) }\approx  \sum_{k } |\lambda_k | $, where each $a_k$ is a      $(2,N)$-atom. At last, we get   holomorphic atomic
 decomposition
\begin{equation*}
f  =\sum   \lambda_k\mathcal P ( a_k)
.
\end{equation*}

This paper is organized as follows. In Section 2, we describe the  flat model  $\mathscr U=\mathbb{R}_+^2\times\mathscr H_1\times\mathscr H_2$ of the product of
Siegel upper half spaces explicitly, and show that a holomorphic  function   on $\mathscr U $ satisfies    the heat equation.  In Section 3, we deduce the
Cauchy-Szeg\H o kernel  on $\mathscr U $, which is the product of Cauchy-Szeg\H o
kernels on $\mathscr U_1 $ and $\mathscr U_2 $, respectively,    from   known formulae for   Cauchy-Szeg\H o kernels on general Siegel domains. Then we show that the Cauchy-Szeg\H o kernel
\begin{equation*}
S((\mathbf{t},\mathbf{g}),\mathbf{g}'),\qquad \mathbf{g} ,\mathbf{g}'\in \mathscr H_1\times\mathscr H_2
  \end{equation*}
  satisfies the condition of rough bi-parameter  Calder\'on-Zygmund kernels   on $ \mathscr H_1\times\mathscr H_2$ uniformly for $\mathbf{t} $, by which we can
  prove $\mathcal P (a)$ belongs to $H^1(\mathscr U )$ for any   $(  2, N)$-atom $a$ on the  group $\mathscr H_1\times \mathscr H_2$ in Section 4. Consequently,
  the    Cauchy-Szeg\H o
projection  $\mathcal P  $  is   bounded   from $  H^1 (\mathscr H_1\times \mathscr H_2)$ to   holomorphic Hardy space $H^1(\mathscr U )$.
In Section 5, an
$H^1(\mathscr U )$ function is proved to have a boundary distribution $f^b\in  H^1 (\mathscr H_1\times \mathscr H_2)$. It  has atomic decomposition, which under the action of
the    Cauchy-Szeg\H o
projection  $\mathcal P  $ gives us holomorphic  atomic decomposition.
We use   parabolic  maximum principle and   parabolic version of subharmonicity of $|f|^p$  to prove Proposition \ref{prop:heat-kernel-Hp} in Section 6.

\section{ the  Flat model}\subsection{ The  flat model $\mathscr U $}Let $\mathscr H_\alpha$ be the Heisenberg group $\mathbb{R}\times\mathbb{C}^{ n_\alpha}$.
We write a point of $\mathscr H_\alpha$ as    $\mathbf{g}_\alpha := (s_\alpha , \mathbf{z}_\alpha)  $ with $s_\alpha\in \mathbb{R}$ and
$
   \mathbf{z}_\alpha=({z}_{\alpha 1},\ldots,{z}_{\alpha n_\alpha})\in\mathbb{C}^{ n_\alpha}$, where  ${z}_{\alpha j}= x_{\alpha j}+\mathbf{i}x_{\alpha(n_\alpha+j)}$, $j=1,\ldots, n_\alpha$.
Its
multiplication is  given by
\begin{equation}\label{eq:multiplication}
   (s_\alpha , \mathbf{z}_\alpha) (s_\alpha ', \mathbf{z}_\alpha')= \left(s_\alpha +s_\alpha '+ 2 {\rm Im} \langle \mathbf{z}_\alpha,  \mathbf{z}_\alpha
   '\rangle,\mathbf{z}_\alpha +\mathbf{z}_\alpha'\right),
\end{equation}where $\langle \cdot ,  \cdot\rangle$ is the standard Hermitian inner product on $\mathbb{C}^{ n_\alpha}$.

The identity element of $\mathscr H_\alpha$ is the origin $\mathbf{0}_\alpha$, and the inverse element of $\mathbf{g}_\alpha =(  s_\alpha, \mathbf{z}_\alpha)$ is
$\mathbf{g}_\alpha^{-1} =( - s_\alpha, -\mathbf{z}_\alpha)$. The {\it homogeneous norm} $\|\cdot\|_\alpha$ of $\mathbf{g}_\alpha$ is defined by
$$ \|\mathbf{g}_\alpha\|_\alpha:= (| \mathbf{z} _\alpha  |^4+| s_\alpha|^2 )^{\frac 1   4}.$$
  Then,  $ \| \mathbf{h}_\alpha ^{-1}  \mathbf{g}_\alpha\|_\alpha  $ is a distance between  $\mathbf{g}_\alpha, \mathbf{h} _\alpha\in \mathscr H_\alpha $.  We define  balls  in $\mathscr H_\alpha$ by
  \begin{equation*}
     B_\alpha(\mathbf{g}_\alpha,r):=\left\{\mathbf{h}_\alpha \in \mathscr
  H_\alpha; \| \mathbf{h}_\alpha ^{-1}  \mathbf{g}_\alpha\|_\alpha <r \right\}.
  \end{equation*}
   The   Heisenberg group $\mathscr H_\alpha $ is a homogeneous group with dilations
 $\delta_r^{(\alpha)}(s_\alpha, \mathbf{z}_\alpha)=(r^2s_\alpha, r \mathbf{z}_\alpha  ),$ $  r>0.$ The Lebesgue measure $d\mathbf{g}_\alpha$  is an invariant measure   on
 $\mathscr H_\alpha $.
Then for any measurable set $E\subset\mathscr H_\alpha$,
 $|\delta_r^{(\alpha)}(E)|=r^{Q_\alpha}|E|,$
 where $Q_\alpha=2n_\alpha+2$ is the {\it homogeneous dimension} of $\mathscr H_\alpha$.
Denote by $\tau_{\mathbf{h}_\alpha}$ the {\it left translation } on $\mathscr H_\alpha$ by $\mathbf{h}_\alpha$, i.e.
$
   \tau_{\mathbf{h}_\alpha}(\mathbf{g}_\alpha)=\mathbf{h}_\alpha  \mathbf{g}_\alpha.
$

 The product
$
   \mathscr H: =\mathscr H_1\times \mathscr H_2
$
is a nilpotent Lie group of step two.
We write a point   in $\mathscr H_1\times \mathscr H_2 $ as $  \mathbf{g }=(  \mathbf{g}_1,\mathbf{g}_2) $ with   $\mathbf{g}_\alpha\in \mathscr H_\alpha$,
 $\alpha=1,2$. It has bi-parameter dilations
 $\delta_{\mathbf{r}}(  \mathbf{g}_1,\mathbf{g}_2):=(  \delta_{r_1}^{(1)}(\mathbf{g}_1),\delta_{r_2}^{(2)}(\mathbf{g}_2)) $, where $  \mathbf{r}=(r_1,r_2) \in \mathbb{R}^2_+ $, and left translation
 $\tau_{\mathbf{h}} (\mathbf{g} ):=(\tau_{\mathbf{h}_1}(\mathbf{g}_1), \tau_{\mathbf{h}_2}(\mathbf{g}_2))$ for $\mathbf{h}= (  \mathbf{h}_1,\mathbf{h}_2)$,
 $\mathbf{g}= (  \mathbf{g}_1,\mathbf{g}_2)\in\mathscr H_1\times
 \mathscr H_2$. The Lebesgue measure $d\mathbf{g}  =d\mathbf{g}_1d\mathbf{g}_2$ is also an invariant measure on $\mathscr H_1\times
 \mathscr H_2$. The convolution of two functions $u$ and $v$ on it is defined as
 \begin{equation*}
    u*v(\mathbf{g}):=\int_{\mathscr H_1\times \mathscr H_2} u(\mathbf{h}^{-1}\mathbf{g})v(\mathbf{h})d\mathbf{h} .
 \end{equation*}

 We   write a point in $\mathscr U_\alpha$ as $  ( t_\alpha ,\mathbf{g}_\alpha)$,
 $\alpha=1,2$,  with   $t_\alpha\in \mathbb{R}_+$, $\mathbf{g}_\alpha=(  s_\alpha, \mathbf{z}_\alpha)\in \mathscr
 H_\alpha$. It is  also convenient to use complex coordinate  $(w_\alpha,  \mathbf{z}_\alpha)$ for  a point in $\mathscr U_\alpha$, where
$w_\alpha=s_\alpha+\mathbf{i}t_\alpha $.
We can   identify    $  {\mathcal U}_\alpha$ with $\mathscr U_\alpha $ by    quadratic transformation $\pi_\alpha: \mathscr
U_\alpha =\mathbb{R}_+ \times\mathscr H_\alpha  \rightarrow   {\mathcal U}_\alpha   $ given by
 \begin{equation}    \label{eq:pi-alpha}
 \begin{split}
(w_\alpha,  \mathbf{z}_\alpha)&\mapsto (\widetilde  w_\alpha,  \widetilde  {\mathbf z}_\alpha)=\left (w_\alpha+\mathbf{i}|\mathbf{z}_\alpha|^2,
\mathbf{z}_\alpha\right) .
 \end{split} \end{equation}
Therefore, we can    identify    $  {\mathcal U}$ with $\mathscr U $ by    quadratic transformation
$
   \pi=\pi_1\times \pi_2:\mathscr U=\mathbb{R}_+^2\times\mathscr H_1\times\mathscr H_2\rightarrow  {\mathcal U}
$,
 \begin{equation}\label{eq:pi}
   (  { \mathbf  w} ,  { \mathbf z} )\mapsto (\widetilde  { \mathbf  w} ,\widetilde  { \mathbf z} )=(  w_1+\mathbf{i}|  {\mathbf z}_1|^2,  w_2+\mathbf{i}|
   {\mathbf
   z}_2|^2, { \mathbf z} ) .
  \end{equation}Here and in the sequel, we write   a point in $\mathscr U$ as $( {\mathbf{w}}, {\mathbf{z}})$ with ${\mathbf{w}} : =   ( {w}_1, {w}_2), {\mathbf{z}}:=( {\mathbf{z}}_1, {\mathbf{z}}_2) $  or $ (\mathbf{t } ,\mathbf{g })=( t_1,t_2 ,\mathbf{g}_1,\mathbf{g}_2)$.  For an object  on $\mathcal U$,
 we add tilde to the notation   corresponding to that on  $\mathscr U$.
  Let
 \begin{equation}\label{eq:dbar-w}
    \frac \partial{\partial \overline w_\alpha}=\frac 12\left (\frac \partial{\partial s_\alpha }+\mathbf{i} \frac \partial{\partial t_\alpha }\right) \qquad
\text{ and }
   \qquad
    \frac \partial{\partial \overline{z}_{\alpha j}}=\frac 12\left (\frac \partial{\partial x_{\alpha j}}+\mathbf{i} \frac {\partial\hskip 8mm}{\partial
    x_{\alpha(n_\alpha+j)}}\right).
 \end{equation}

Note that
${\rm Im} \langle \mathbf{z}_\alpha,  \mathbf{z}_\alpha
   '\rangle= \sum_{j=1}^{n_\alpha}(-x_{\alpha j} x'_{\alpha(n_\alpha+j)} + x_{\alpha(n_\alpha+j)}x'_{\alpha j})$.
Then
 \begin{equation*}\begin{split}
    X_{\alpha j} =\frac \partial{\partial x_{\alpha j}}+2  x_{\alpha(n_\alpha+j)}\frac \partial{\partial s_\alpha }, \qquad \qquad   X_{\alpha(n_\alpha+j)} =\frac
   {\partial\hskip 8mm}{\partial
    x_{\alpha(n_\alpha+j)}}-2  x_{\alpha j}\frac \partial{\partial s_\alpha },
 \end{split}  \end{equation*}$j=1,\ldots,n_\alpha$,  are left invariant vector fields on the group $\mathscr H_\alpha$,
and
 \begin{equation} \label{eq:brackets}
   \left [X_{\alpha j},X_{\alpha(n_\alpha+j)}\right]=-4 \frac \partial{\partial s_\alpha },
 \end{equation}
 and all other brackets vanish. The {\it sub-Laplacian} on  the Heisenberg group  $ \mathscr H_\alpha$ is
$
  \triangle_{\alpha}:= -\frac  1{4n_\alpha}\sum_{j=1}^{2n_\alpha}   X_{\alpha j} ^2
$. Denote $
   \overline{Z}_{\alpha j}=\frac 12  (X_{\alpha j}+\mathbf{i}X_{\alpha(n_\alpha+j)})
$. Then\begin{equation*}
  \overline{Z}_{\alpha j}  =\frac \partial{\partial \overline{z}_{\alpha j}}- \mathbf{i} {z}_{\alpha j}\frac \partial{\partial s_\alpha }.
    \end{equation*}

The following proposition  characterizes  the complex
structure pulled back by $\pi$. Namely, a function $f$ is {\it holomorphic} on $\mathscr U$  with respect to this complex
structure if and only if \eqref{eq:regular} is satisfied.

  \begin{prop}  \label{prop:regular-equiv} A  function $\widetilde {f}$ is  holomorphic on $  {\mathcal U}  $ if and only if $f:=\pi^*
  \widetilde {f}
  $   satisfies
  \begin{equation}\label{eq:regular}
    \frac {\partial f}{\partial \overline{w}_\alpha }=0   \qquad {\rm and }\qquad  \overline{Z}_{\alpha j} f=0,
  \end{equation}on $ \mathscr U $,  where $j=1,\ldots,n_\alpha,$ $ \alpha=1,2 $, and $(\pi^*
  \widetilde {f})(  { \mathbf  w} ,  { \mathbf z} )=\widetilde {f}(\pi (  { \mathbf  w} ,  { \mathbf z} ))$.
  \end{prop}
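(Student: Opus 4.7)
The plan is to prove the equivalence by a direct chain-rule computation, using the fact that $\pi$ is a real-analytic diffeomorphism whose complex structure on $\mathscr U$ is, by definition, the pullback of the standard one on $\mathcal U$. Holomorphicity of $\widetilde f$ on $\mathcal U$ is the classical system
\begin{equation*}
\frac{\partial \widetilde f}{\partial \overline{\widetilde w}_\alpha}=0, \qquad \frac{\partial \widetilde f}{\partial \overline{\widetilde z}_{\alpha j}}=0,\qquad \alpha=1,2,\ j=1,\dots,n_\alpha.
\end{equation*}
So the whole task is to rewrite these equations using the components of $\pi$ given in \eqref{eq:pi}, namely $\widetilde w_\alpha=w_\alpha+\mathbf{i}|\mathbf z_\alpha|^2$ and $\widetilde z_{\alpha j}=z_{\alpha j}$.

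First I would record the elementary partial derivatives of the coordinates $\widetilde w_\alpha,\overline{\widetilde w}_\alpha,\widetilde z_{\alpha j},\overline{\widetilde z}_{\alpha j}$ with respect to the source coordinates $w_\beta,\overline w_\beta,z_{\beta k},\overline z_{\beta k}$. Because the different factors $\alpha=1,2$ decouple and $\pi$ involves only $w_\alpha$ and $\mathbf z_\alpha$ in each component, the only nontrivial mixed derivatives are $\partial\widetilde w_\alpha/\partial z_{\alpha j}=\mathbf{i}\overline z_{\alpha j}$, $\partial\widetilde w_\alpha/\partial\overline z_{\alpha j}=\mathbf{i}z_{\alpha j}$, and their conjugates. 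All the rest are either $0$, $1$, or Kronecker $\delta$'s.

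Next I apply the chain rule to $f=\pi^*\widetilde f$. The computation
\begin{equation*}
\frac{\partial f}{\partial \overline w_\alpha}=\frac{\partial\widetilde f}{\partial\overline{\widetilde w}_\alpha}
\end{equation*}
is immediate because the only component of $\pi$ depending on $\overline w_\alpha$ is $\overline{\widetilde w}_\alpha$ itself. Then, using $\partial/\partial s_\alpha=\partial/\partial w_\alpha+\partial/\partial\overline w_\alpha$, I compute
\begin{equation*}
\frac{\partial f}{\partial s_\alpha}=\frac{\partial\widetilde f}{\partial\widetilde w_\alpha}+\frac{\partial\widetilde f}{\partial\overline{\widetilde w}_\alpha},\qquad
\frac{\partial f}{\partial\overline z_{\alpha j}}=\mathbf{i}z_{\alpha j}\frac{\partial\widetilde f}{\partial\widetilde w_\alpha}-\mathbf{i}z_{\alpha j}\frac{\partial\widetilde f}{\partial\overline{\widetilde w}_\alpha}+\frac{\partial\widetilde f}{\partial\overline{\widetilde z}_{\alpha j}}.
\end{equation*}
Assembling these into the definition of $\overline Z_{\alpha j}$ gives the key identity
\begin{equation*}
\overline Z_{\alpha j}f=\frac{\partial f}{\partial\overline z_{\alpha j}}-\mathbf{i}z_{\alpha j}\frac{\partial f}{\partial s_\alpha}=\frac{\partial\widetilde f}{\partial\overline{\widetilde z}_{\alpha j}}-2\mathbf{i}z_{\alpha j}\frac{\partial\widetilde f}{\partial\overline{\widetilde w}_\alpha}.
\end{equation*}

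With these two identities, the equivalence follows immediately: if $\widetilde f$ is holomorphic then both right-hand sides vanish, giving \eqref{eq:regular}; conversely, \eqref{eq:regular} forces first $\partial\widetilde f/\partial\overline{\widetilde w}_\alpha=0$ and then, substituting back, $\partial\widetilde f/\partial\overline{\widetilde z}_{\alpha j}=0$. Since there is no real obstacle here beyond bookkeeping of the chain rule, I would simply present the two computations cleanly and draw the conclusion. The only mild subtlety is noticing that the two equations in \eqref{eq:regular} are not independent of each other at the level of the computation: the sub-bundle condition $\overline Z_{\alpha j}f=0$ alone is not equivalent to $\partial\widetilde f/\partial\overline{\widetilde z}_{\alpha j}=0$ unless one also imposes the transverse Cauchy-Riemann equation $\partial f/\partial\overline w_\alpha=0$; this explains why both conditions are needed in the statement.
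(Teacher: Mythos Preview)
Your proposal is correct and follows essentially the same approach as the paper: both arguments reduce to the identities $\partial f/\partial\overline w_\alpha=\partial\widetilde f/\partial\overline{\widetilde w}_\alpha$ and $\overline Z_{\alpha j}f=\partial\widetilde f/\partial\overline{\widetilde z}_{\alpha j}-2\mathbf{i}z_{\alpha j}\,\partial\widetilde f/\partial\overline{\widetilde w}_\alpha$, from which the equivalence is immediate. The only cosmetic difference is that the paper phrases the computation as the pushforward $\pi_*\overline Z_{\alpha j}$ worked out in real coordinates before reassembling into Wirtinger derivatives, whereas you apply the chain rule directly in complex coordinates; the content is identical.
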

\begin{proof} Recall that for a vector field $X$ on $\mathscr U$,  the pushing forward vector field $
\pi_*X $ on $ {\mathcal U}  $ is defined by
\begin{equation*}
  \left.  (\pi_*X)\psi\right|_{\pi (  { \mathbf  w} ,  { \mathbf z} )}= X[\psi(\pi (  { \mathbf  w} ,  { \mathbf z} ))]
\end{equation*}
for any scalar function $\psi$ on $ {\mathcal U}$. If we write coordinates of $\mathcal{U}$ as $(\widetilde{\mathbf{w}},\widetilde{\mathbf{z}})$, where $ \widetilde{\mathbf{w}}=(  {w}_1, {w}_2)$, $ \widetilde{\mathbf{z}}=(\widetilde{\mathbf{z}}_1,\widetilde{\mathbf{z}}_2)$, and  $\widetilde{w}_\alpha=\widetilde  s_\alpha+\mathbf{i}\widetilde  t_\alpha$, $
   \widetilde{\mathbf{z}}_\alpha=(\widetilde{{z}}_{\alpha 1},\ldots,\widetilde{{z}}_{\alpha n_\alpha})  $, $\widetilde  z_{\alpha j}=\widetilde  x_{\alpha j}+\mathbf{i}\widetilde  x_{\alpha (n_\alpha+j)}$, then the transformation $\pi$ in \eqref{eq:pi} is given by
\begin{equation*}
   \widetilde  s_\alpha= s_\alpha  ,  \qquad \widetilde  t_\alpha= t_\alpha+\sum_{j= 1}^{2n_\alpha}  |x_{\alpha j}|^2
, \qquad \widetilde  x_{\alpha j}= x_{\alpha j}.
\end{equation*}
  It is
direct to check that
\begin{equation}\label{eq:pi-XY}\begin{split}\pi_*\frac {\partial}{\partial s_\alpha}& =\frac {\partial} {\partial \widetilde  s_\alpha },\qquad\qquad
    \pi_*\frac {\partial}{\partial t_\alpha} =\frac {\partial}{ \partial\widetilde   t_\alpha },
    \\ \pi _*\frac {\partial}{ \partial x_{\alpha j}}&=\frac {\partial}{\partial\widetilde   x_{\alpha j}}+ 2\widetilde  x_{\alpha j}
\frac {\partial}{\partial \widetilde  t_\alpha},\qquad j= 1, \cdots,2n_\alpha.
\end{split}\end{equation}
Consequently, we have
\begin{equation}\label{eq:pi-Z0}\begin{aligned}\pi_*\overline{Z}_{\alpha j}=&
\pi_*\left(\frac \partial{\partial \overline{z}_{\alpha j}}- \mathbf{i} {z}_{\alpha j}\frac \partial{\partial s_\alpha }\right)\\=&\frac 12 \left(
\frac \partial{\partial \widetilde  x_{\alpha j}}+\mathbf{i} \frac {\partial\hskip 8mm}{\partial \widetilde   x_{\alpha(n_\alpha+j)}}\right)+ (\widetilde   x_{\alpha j}  +
\mathbf{i}\widetilde
 x_{\alpha(n_\alpha+j)})\frac  \partial {\partial \widetilde  t_\alpha}-\mathbf{i} {\widetilde  z}_{\alpha j}\frac \partial{\partial\widetilde   s_\alpha }
  \\ =& \frac \partial
 {\partial  \overline{\widetilde  {z}}_{\alpha j}}-2\mathbf{i}  {\widetilde  z}_{\alpha j}\frac \partial
 {\partial \overline{{\widetilde  w}}_\alpha}
,
\end{aligned}\end{equation}where $\frac \partial{\partial \overline {\widetilde  w}_\alpha}=\frac 12\left (\frac \partial{\partial  \widetilde  s_\alpha
}+\mathbf{i} \frac
\partial{\partial  \widetilde    t_\alpha }\right)$. Thus,
\begin{equation*}\begin{split}
\left.
 \overline{Z}_{\alpha j}(\pi^* \widetilde {f})\right|_{(  { \mathbf  w} ,  { \mathbf z} )}& = \left. \pi_*\overline{Z}_{\alpha j}  \widetilde {f} \right|_{\pi(  {
 \mathbf  w} ,  { \mathbf z} )}= \left.\left(\frac {\partial\widetilde {f}}
 {\partial  \overline{\widetilde  {z}}_{\alpha j}}-2\mathbf{i}  {\widetilde  z}_{\alpha j}\frac {\partial\widetilde {f}}
 {\partial \overline{{\widetilde  w}}_\alpha}\right)\right|_{\pi(  { \mathbf  w} ,  { \mathbf z} )},\\
 \left.\frac {\partial(\pi^*  \widetilde f)}{\partial \overline {   w}_\alpha} \right|_{ (  {
 \mathbf  w} ,  { \mathbf z} )}&= \left. \left(\pi_*\frac
{\partial}{\partial \overline {  w}_\alpha}\right) \widetilde f\right|_{\pi(  { \mathbf  w} ,  { \mathbf z} )}  = \left.\frac
{\partial\widetilde f}{\partial \overline {\widetilde  w}_\alpha} \right|_{\pi(  { \mathbf  w} ,  { \mathbf z} )}
.
\end{split}\end{equation*} We see that
$\widetilde {f}$ is holomorphic on $  {\mathcal U}  $, i.e. $\frac {\partial\widetilde {f}}
 {\partial  \overline{\widetilde  {z}}_{\alpha j}}=0$, $\frac {\partial\widetilde {f}}
 {\partial \overline{{\widetilde  w}}_\alpha}=0$,  if and only if
\eqref{eq:regular} holds for $f=\pi^* \widetilde {f}$.
\end{proof}
\begin{rem}  $ \pi_*\overline{Z}_{\alpha j}$ is a vector field tangential to the    boundary   $ \partial  {\mathcal U}_\alpha$,  since
 $  \frac {\partial \rho_\alpha}
 {\partial  \overline{\widetilde  {z}}_{\alpha j}}-2\mathbf{i}  {\widetilde  z}_{\alpha j}\frac {\partial \rho_\alpha}
 {\partial \overline{\widetilde  {w}}_\alpha} =0,$ $j=1,\ldots,n_\alpha ,$ $\alpha=1,2$, by definition.
\end{rem}

Holomorphic Hardy space $H^p(  {\mathcal U})$ is the space of all holomorphic functions $\widetilde{f }$ on $   {\mathcal U}$ such that
\begin{equation*}
   \|\widetilde  f\|_{H^p(  {\mathcal U})} =\left(\sup_{\widetilde  t_1,\widetilde  t_2>0}\int_{ \mathbb{R}^2\times\mathbb{C}^{n_1 +n_2}}\left|\widetilde
   f\left(\widetilde  s_1
   +\mathbf{i}(\widetilde  t_1 + |\widetilde  {\mathbf z}_1|^2),\widetilde  s_2 +\mathbf{i}(\widetilde  t_2 + |\widetilde  {\mathbf z}_2|^2),\widetilde  {\mathbf
   z}_1,\widetilde  {\mathbf
   z}_2\right )\right|^pd\widetilde s_1d\widetilde s_2 d\widetilde  {\mathbf z}\right)^{\frac 1p}<\infty.
\end{equation*}
The diffeomorphism $\pi$ in (\ref{eq:pi}) induces an   isomorphism of Hardy spaces $\pi^*:
   H^p(  {\mathcal{U}} ) \longrightarrow   H^p(\mathscr U) $ given by
\begin{equation}\label{eq:isomorphism}\begin{split}\widetilde {f} &\mapsto\left (\pi^*\widetilde {f}\right)(\mathbf{w} , \mathbf{z} ):=\widetilde
{f}(w_1+\mathbf{i}|\mathbf{z}_1|^2,w_2+\mathbf{i}|\mathbf{z}_2|^2, \mathbf{z}_1,\mathbf{z}_2 ),
\end{split}\end{equation}
with $\|\cdot\|$ preserved.

\begin{prop}\label{lem:bound}
    There exists a positive constant $C$ depending only on $Q_1$, $Q_2$ and $  p >0  $
  such that
  \begin{equation}\label{eq:bound}
    |f ( \mathbf{t } , \mathbf{g})|\leq  C\|f \|_{H^p( \mathscr U ) }t_1^{-\frac {Q_1}{2p }}t_2^{-\frac {Q_2}{2p }},
  \end{equation}
  for any $f\in H^p( \mathscr U ) $  and any $(\mathbf{t}  , \mathbf{g})\in \mathscr U$.
 \end{prop}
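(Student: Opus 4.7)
The plan is to reduce the pointwise bound to a subaveraging-type inequality
$|f(\mathbf{t},\mathbf{g})|^p \leq \int h_{\mathbf{t}/2}(\mathbf{h}^{-1}\mathbf{g})\,|f(\mathbf{t}/2,\mathbf{h})|^p\,d\mathbf{h}$, combined with the supremum norm of the bi-parameter heat kernel obtained from its natural scaling.

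The first step is a translation trick that places us in a setting where the subaveraging mechanism is directly available. For fixed $\mathbf{t}=(t_1,t_2)\in\mathbb{R}^2_+$, define $f_{\mathbf{t}/2}(\mathbf{s},\mathbf{g}):=f(\mathbf{t}/2+\mathbf{s},\mathbf{g})$. This function is holomorphic on $\mathscr U$ by Proposition \ref{prop:regular-equiv}, since the defining operators $\partial/\partial\overline{w}_\alpha$ and $\overline{Z}_{\alpha j}$ commute with $\mathbf{t}$-translation; it is continuous on $\overline{\mathscr U}$ because $\mathbf{t}/2\in\mathbb{R}^2_+$ lies strictly inside the domain; and it satisfies $\|f_{\mathbf{t}/2}\|_{H^p(\mathscr U)}\leq \|f\|_{H^p(\mathscr U)}$ by translation invariance of the defining supremum.

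For $0<p\leq 1$ I apply Proposition \ref{prop:heat-kernel-Hp} to $f_{\mathbf{t}/2}$ with $q=p$ and evaluate at time $\mathbf{t}/2$, yielding
\begin{equation*}
    |f(\mathbf{t},\mathbf{g})|^p=|f_{\mathbf{t}/2}(\mathbf{t}/2,\mathbf{g})|^p\leq\int_{\mathscr H}h_{\mathbf{t}/2}(\mathbf{h}^{-1}\mathbf{g})|f(\mathbf{t}/2,\mathbf{h})|^p\,d\mathbf{h}.
\end{equation*}
For $p>1$ the same inequality follows without appealing to Proposition \ref{prop:heat-kernel-Hp}: the heat equation of Proposition \ref{prop:regular-sublap} together with standard uniqueness for heat evolutions with $L^p$ initial data identifies $f(\mathbf{t},\mathbf{g})$ with $h_{\mathbf{t}/2}*f(\mathbf{t}/2,\cdot)(\mathbf{g})$, and Jensen's inequality applied to the probability kernel $h_{\mathbf{t}/2}$ and the convex function $x\mapsto x^p$ gives the same subaveraging bound.

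Finally, the bi-parameter heat kernel factorizes as $h_{\mathbf{t}/2}(\mathbf{h})=h_{t_1/2}^{(1)}(\mathbf{h}_1)h_{t_2/2}^{(2)}(\mathbf{h}_2)$, and the scaling $h_s^{(\alpha)}(\mathbf{g}_\alpha)=s^{-Q_\alpha/2}h_1^{(\alpha)}(\delta_{s^{-1/2}}^{(\alpha)}\mathbf{g}_\alpha)$ on each Heisenberg factor forces $\|h_{\mathbf{t}/2}\|_{L^\infty(\mathscr H)}\leq Ct_1^{-Q_1/2}t_2^{-Q_2/2}$. Combining this with $\int_{\mathscr H}|f(\mathbf{t}/2,\mathbf{h})|^p\,d\mathbf{h}\leq \|f\|_{H^p(\mathscr U)}^p$ and extracting the $p$-th root yields \eqref{eq:bound}. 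The main obstacle is the subaveraging step itself in the regime $p<1$, where Jensen is unavailable and one truly needs the parabolic subharmonicity of $|f|^p$ underlying Proposition \ref{prop:heat-kernel-Hp}; the translation trick is indispensable because a generic $H^p$ function need not extend continuously to the Silov boundary.
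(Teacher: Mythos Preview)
Your argument is circular within the paper's logical structure. You invoke Proposition~\ref{prop:heat-kernel-Hp} (and, for $p>1$, the reproducing formula of Proposition~\ref{prop:heat-kernel-H1}), but both of these are proved in Section~\ref{subharmonicity} \emph{after} Proposition~\ref{lem:bound} and depend on it. Concretely, the proof of Proposition~\ref{prop:heat-kernel-H1} cites Proposition~\ref{lem:bound} explicitly to obtain $f(\boldsymbol{\varepsilon}_k,\cdot)\in L^\infty$, and the boundary decay step (the claim~\eqref{eq:claim} and the estimate leading to~\eqref{eq:estimate1}) uses the pointwise control that Proposition~\ref{lem:bound} supplies; the proof of Proposition~\ref{prop:heat-kernel-Hp} then refers back to that same argument. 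So you are assuming what you want to prove. There is also a formal mismatch: Proposition~\ref{prop:heat-kernel-Hp} is stated for $f\in H^1(\mathscr U)$, whereas for $p<1$ you only know $f_{\mathbf t/2}\in H^p(\mathscr U)$; upgrading this to $H^1$ would again require an $L^\infty$ bound on slices, i.e.\ Proposition~\ref{lem:bound} itself.

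The paper's proof avoids all of this by using the complex structure directly and is completely elementary. One first reduces to $\mathbf g=\mathbf 0$ by left translation. Then $\widetilde f=(\pi^{-1})^*f$ is genuinely holomorphic on $\mathcal U=\mathcal U_1\times\mathcal U_2$, so $|\widetilde f|^p$ is plurisubharmonic for every $p>0$, and one applies the sub-mean-value inequality over the polydisc $D_1\times D_2$ with $D_\alpha=D(\mathbf i t_\alpha,t_\alpha/2)\times D(0,\sqrt{t_\alpha/4n_\alpha})^{n_\alpha}\subset\mathcal U_\alpha$, which has volume comparable to $t_\alpha^{n_\alpha+2}$. The change of variables $(\widetilde{\mathbf w},\widetilde{\mathbf z})\mapsto(\widetilde w_1+\mathbf i|\widetilde{\mathbf z}_1|^2,\widetilde w_2+\mathbf i|\widetilde{\mathbf z}_2|^2,\widetilde{\mathbf z})$ then converts the polydisc integral into an integral over $\mathscr H_1\times\mathscr H_2$ dominated by $\|f\|_{H^p(\mathscr U)}^p$. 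This gives~\eqref{eq:bound} with no appeal to heat kernels or maximum principles, which is precisely why it can serve as the foundational estimate on which Propositions~\ref{prop:heat-kernel-H1} and~\ref{prop:heat-kernel-Hp} are later built.
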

 \begin{proof}
 Note that if $f\in H^p( \mathscr U )$, then $ \widehat{{f}}(\mathbf{t}, \mathbf{h}):=f(\mathbf{t},\mathbf{g}  \mathbf{h})$ for fixed $\mathbf{g}$ is also
 holomorphic on $  \mathscr U$, since
\begin{equation*}
   \overline Z_{\alpha j}  \widehat{{f}}( \mathbf{t} ,\mathbf{h } )=  (\overline Z_{\alpha j}  f)( \mathbf{t},\mathbf{g}  \mathbf{h}  )=0  , \quad\qquad {\rm
   and }\qquad \frac {\partial \widehat{ {f}} }{\partial \overline{w}_\alpha }( \mathbf{t} ,\mathbf{h } )=\frac {\partial f }{\partial \overline{w}_\alpha }(
   \mathbf{t},\mathbf{g}  \mathbf{h}  )=0,
\end{equation*}by   holomorphicity of $f$ on $ \mathscr U
$ and left invariance of $ \overline Z_{\alpha j} $  and $\frac {\partial   }{\partial \overline{w}_\alpha }$. Also, we have
\begin{equation*}
   \int_{\mathscr H_1\times \mathscr H_2}| \widehat{{f}}( \mathbf{t} , \mathbf{h} )| d\mathbf{h}=
\int_{\mathscr H_1\times \mathscr H_2}| {f}( \mathbf{t}, \mathbf{h}  )| d\mathbf{h},
\end{equation*}
by the invariance of the measure $d\mathbf{h}$. We get $ \widehat{{f}}\in H^1( \mathscr U )$. Hence,  it is sufficient to prove (\ref{eq:bound}) for $\mathbf{g}=\mathbf{0}$.

To apply the mean value formula, we need to transform $f$ to a holomorphic function $ \widetilde{ f}$ on $ \mathcal U  $ in the usual sense.
Recall that
 \begin{equation}\label{eq:tilde-f-f}
    \widetilde {f}(  \widetilde {{\mathbf w} } ,  \widetilde  {\mathbf z} ):=f(\widetilde  w_1-\mathbf{i}|\widetilde {\mathbf{ z}}_1|^2 ,\widetilde
    w_2-\mathbf{i}|\widetilde {\mathbf{
    z}}_2|^2 , \widetilde{\mathbf{z}})
 \end{equation}
   belongs to $ H^1(\mathcal{U} )$ with the same norm by (\ref{eq:isomorphism}).    Let $D ( z, r)$ be the disc in $ \mathbb{C} $ with radius $r$ and center $z$.
   Since the polydisc
$D_\alpha:=D (\mathbf{i}t_\alpha, t_\alpha/2)\times D (0, \sqrt{t_\alpha/4n_\alpha} )\times\cdots\times D (0, \sqrt{t_\alpha/4n_\alpha} )) \subset   {\mathcal U}_\alpha$,
we have
   \begin{equation*}
      \widetilde  f (\mathbf{i}\mathbf{t} , \mathbf{0})=\frac 1{|D_{1} || D_{2}| } \int_{D_{1}}\int_{ D_{2}} \widetilde  f (\widetilde  {\mathbf w},\widetilde  {\mathbf
      z})d\widetilde {\mathbf
      w}d\widetilde  {\mathbf z},
   \end{equation*}by the mean value formula, where $\mathbf{t}=(t_1,t_2)\in \mathbb{R}^2_+$  and
 \begin{equation*}
    D_{\alpha}  \subset\{ (\widetilde  w_\alpha,\widetilde  {\mathbf z}_\alpha) \in   {\mathcal U}_\alpha; t_\alpha/4< \operatorname{Im}
 \widetilde
 w_\alpha-|\widetilde  {\mathbf z}_\alpha|^2< 2t _\alpha\}
 \end{equation*}
   by definition. So we have
   \begin{align*}
     |\widetilde  f(\mathbf{i}\mathbf{t},\mathbf{0})| &\leq  \frac {C}{t _1^{ n_1+2}t _2^{ n_2+2}} \int_{ t _1/4< \operatorname{Im}\widetilde   w_1-|\widetilde  {\mathbf
     z}_1|^2< 2t _1
     }   \int_{  t_2/4< \operatorname{Im} \widetilde  w_2-|\widetilde  {\mathbf z}_2|^2< 2t_2 }  \left|\widetilde  f(\widetilde  {\mathbf w},\widetilde  {\mathbf
     z})\right|  d\widetilde
     {\mathbf w}d\widetilde  {\mathbf z}
      \\& = \frac {C}{t _1^{ n_1+2}t _2^{ n_2+2}}    \int_{\operatorname{Im}\widetilde   w_1\in(  {t _1}/4,2t_1 )}\int_{\operatorname{Im} \widetilde
      w_2\in(  {t
      _2}/4,2t_2 )}\int_{ \mathbb{R}^{2} \times \mathbb{C}^{n_1} \times \mathbb{C}^{n_2}} \left|\widetilde  f\left(\widetilde  w_1+\mathbf{i}|\widetilde  {\mathbf
      z}_1|^2,\widetilde   w_2+\mathbf{i}|\widetilde  {\mathbf z}_2|^2, \widetilde  {\mathbf z}\right)\right|  d\widetilde  {\mathbf w}d\widetilde  {\mathbf z}
      \\
    & = \frac {C}{t_1^{ n_1+2}t _2^{ n_2+2}}   \int_{t_1/4}^{  2t _1  }dt_1\int_{t _2/4}^{  2t_2  }dt_2\int_{ \mathscr H_1\times \mathscr H_2} |f (
    \mathbf{t},\mathbf{g})|  d \mathbf{g}
     \\&\leq \frac {4C}{t _1^{ n_1+1}t _2^{ n_2+1}} \|f\|_{H^1(\mathscr U )} ,
\end{align*}by \eqref{eq:tilde-f-f},
  where    we have used the coordinates transformation $(\widetilde  {\mathbf w},\widetilde  {\mathbf z} )\mapsto (\widetilde  w_1+\mathbf{i}|\widetilde  {\mathbf
  z}_1|^2,\widetilde
  w_2+\mathbf{i}|\widetilde  {\mathbf z}_2|^2, \widetilde  {\mathbf z} )$ in the first identity,
 which obviously preserves the volume form. The estimate follows from $ \widetilde  f  (\mathbf{i}\mathbf{t},\mathbf{0})=f(\mathbf{i}\mathbf{t },\mathbf{0})$ by definition \eqref{eq:tilde-f-f}.
  \end{proof}
  \subsection{ The  heat equations}  The   {\it heat operator} on $\mathscr H_\alpha$ is
\begin{equation*}
   \mathcal{L}_\alpha:=\frac  1{4n_\alpha}\sum_{j=1}^{2n_\alpha}   X_{\alpha j} ^2 -\frac \partial{\partial t_\alpha }.
\end{equation*}
  \begin{proof}[Proof of Proposition \ref{prop:regular-sublap}] Note that
   \begin{equation*}\begin{split}
  4\sum_{j=1}^{n_\alpha}{Z}_{\alpha j}\overline{Z}_{\alpha j} & =\sum_{j=1}^{n_\alpha}\left (X_{\alpha j}-\mathbf{i}X_{\alpha(n_\alpha+j)}\right)
 \left (X_{\alpha j}+\mathbf{i}X_{\alpha(n_\alpha+j)}\right)\\&=\sum_{j=1}^{2n_\alpha}
     X_{\alpha j} ^2 +\mathbf{i}\sum_{j=1}^{n_\alpha}\left[X_{\alpha j},X_{\alpha(n_\alpha+j)}\right] =\sum_{j=1}^{2n_\alpha}    X_{\alpha j} ^2
  -4n_\alpha\mathbf{ i}\frac \partial{\partial s_\alpha }
 \end{split} \end{equation*}by  brackets in \eqref{eq:brackets}.
 Thus
   \begin{equation*}\begin{split} 4n_\alpha\mathcal{L}_\alpha f&=\sum_{j=1}^{2n_\alpha}   X_{\alpha j} ^2  f-4n_\alpha \frac
   {\partial f}{\partial t_\alpha }
   =4\sum_{j=1}^{n_\alpha}{Z}_{\alpha j}\overline{Z}_{\alpha j}f +4n_\alpha\mathbf{ i}\frac  {\partial f}{\partial s_\alpha }-4n_\alpha \frac  {\partial f}{\partial
   t_\alpha }    \\
   & =
    4\sum_{j=1}^{n_\alpha}{Z}_{\alpha j}\overline{Z}_{\alpha j}f +8n_\alpha \mathbf{i}\frac {\partial f}{\partial \overline{w}_\alpha }=0,
 \end{split} \end{equation*}by
 the expression of  $ \frac \partial{\partial \overline w_\alpha}  $ in \eqref{eq:dbar-w} and  Proposition \ref{prop:regular-equiv}, since $f $ is holomorphic on $ \mathscr U$.
\end{proof}

Let $h_{t_\alpha}^{(\alpha)}(\mathbf{g})$ be the heat kernel of $e^{-t_\alpha\triangle_\alpha }$ on $\mathscr H_\alpha$. Then,
$
   h_{\mathbf{t } }(\mathbf{g}_1,\mathbf{g}_2)=h_{t_1}^{(1)}(\mathbf{g}_1)h_{t_2}^{(2)}(\mathbf{g}_2)
$
  is the bi-parameter   heat kernel of
$e^{-t_1\triangle_1 }e^{-t_2\triangle_2 }$, where $\mathbf{t }  = (t_1,t_2)\in \mathbb{R}_+^2$.
\begin{prop}  \label{prop:heat-kernel-H1}  Suppose $f\in H^p(\mathscr U )$ with $p\geq 1$ and $f$ is continuous on $\overline{\mathscr
 U }$. Then
  \begin{equation}\label{eq:heat-kernel-H1}
 {f} (\mathbf{t} , \mathbf{g})=\int_{\mathscr H_1\times \mathscr H_2}h_{\mathbf{t } }(\mathbf{h}{}^{-1}  \mathbf{g})f(\mathbf{0},\mathbf{h}) d \mathbf{h}  .
 \end{equation}
The formula also holds for $f\in H^p(\mathscr U_\alpha )$, $\alpha=1,2$.
\end{prop}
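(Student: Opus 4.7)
The plan is to establish the one-parameter reproducing formula on $\mathscr U_\alpha$ first and then lift it to the bi-parameter formula by translating in $\mathbf t$. Fix $\alpha$. By Proposition~\ref{prop:regular-sublap}, any holomorphic $f$ on $\mathscr U_\alpha$ solves $(\partial_{t_\alpha}+\triangle_\alpha)f=0$. Continuity on $\overline{\mathscr U_\alpha}$ combined with Fatou's lemma applied to the $H^p$-norm gives $f(0,\cdot)\in L^p(\mathscr H_\alpha)$, and Proposition~\ref{lem:bound} yields $|f(s,\cdot)|\leq C\|f\|_{H^p}s^{-Q_\alpha/(2p)}$, so for every $s>0$ the slice $f(s,\cdot)$ is bounded and in $L^p(\mathscr H_\alpha)$.

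For such bounded $L^p$ initial data the Cauchy problem for $\partial_t+\triangle_\alpha$ on $\mathscr H_\alpha$ is uniquely solvable---by a parabolic maximum principle together with the Gaussian upper bound on $h^{(\alpha)}_t$---so the two solutions $(t,\mathbf g_\alpha)\mapsto f(t+s,\mathbf g_\alpha)$ and $(t,\mathbf g_\alpha)\mapsto(h^{(\alpha)}_t*f(s,\cdot))(\mathbf g_\alpha)$, which share the initial value $f(s,\cdot)$, must coincide:
\[
 f(t+s,\mathbf g_\alpha)=\int_{\mathscr H_\alpha}h^{(\alpha)}_t(\mathbf h_\alpha^{-1}\mathbf g_\alpha)\,f(s,\mathbf h_\alpha)\,d\mathbf h_\alpha.
\]
Letting $s\to 0$, the left side tends to $f(t,\mathbf g_\alpha)$ by continuity on $\overline{\mathscr U_\alpha}$, and on the right side I would invoke Vitali's convergence theorem: $f(s,\mathbf h_\alpha)\to f(0,\mathbf h_\alpha)$ pointwise, while the uniform $L^p$-bound on $f(s,\cdot)$ together with $h^{(\alpha)}_t\in L^{p'}(\mathscr H_\alpha)$ supplies the necessary equi-integrability via H\"older's inequality. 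This establishes the one-parameter formula.

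For the bi-parameter formula, fix $\mathbf s=(s_1,s_2)\in\mathbb R_+^2$. The translated function $(\mathbf t,\mathbf g)\mapsto f(\mathbf t+\mathbf s,\mathbf g)$ is bounded on $\overline{\mathscr U}$ by Proposition~\ref{lem:bound} and still satisfies both heat equations. Applying the one-parameter uniqueness argument in $t_1$ with $(t_2,\mathbf g_2)$ fixed, and then again in $t_2$, and interchanging the two resulting integrals by Fubini using the product structure $h_{\mathbf t}=h^{(1)}_{t_1}\otimes h^{(2)}_{t_2}$ and the boundedness of $f(\mathbf s,\cdot)$, gives $f(\mathbf t+\mathbf s,\mathbf g)=h_{\mathbf t}*f(\mathbf s,\cdot)(\mathbf g)$. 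Sending $\mathbf s\to\mathbf 0$ and invoking Vitali once more (with $\|f(\mathbf s,\cdot)\|_{L^p(\mathscr H_1\times\mathscr H_2)}\leq\|f\|_{H^p}$ and $h_{\mathbf t}\in L^{p'}$) delivers the bi-parameter formula. The most delicate step throughout is the boundary limit in $\mathbf s$: because we only have pointwise continuity at the boundary and a uniform $L^p$-bound, the interchange of limit and convolution bypasses dominated convergence and must go through equi-integrability; a subsidiary technical ingredient is quoting the uniqueness theorem for the sub-Laplacian Cauchy problem in the right class, though this is standard since the polynomial-in-$1/t_\alpha$ growth provided by Proposition~\ref{lem:bound} is comfortably below the exponential Widder/Tychonov threshold.
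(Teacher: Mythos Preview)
Your outline matches the paper's proof closely: shift by $\boldsymbol{\varepsilon}>0$, use the parabolic maximum principle in each Heisenberg factor to get $f(\mathbf t+\boldsymbol{\varepsilon},\mathbf g)=h_{\mathbf t}*f(\boldsymbol{\varepsilon},\cdot)(\mathbf g)$, then send $\boldsymbol{\varepsilon}\to0$. The paper carries out the maximum-principle step explicitly (its Proposition~\ref{prop:max}), verifying the lateral boundary decay on $[0,T)\times\partial B_1(\mathbf 0_1,r)$ from integrability of $f^*$; you quote uniqueness abstractly, which is acceptable since the growth bound from Proposition~\ref{lem:bound} is indeed well within the standard uniqueness class.

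The one genuine gap is in your limit step when $p=1$. Your equi-integrability claim via H\"older reads
\[
\int_E\bigl|h_{\mathbf t}(\mathbf h^{-1}\mathbf g)\,f(\mathbf s,\mathbf h)\bigr|\,d\mathbf h\le\|f(\mathbf s,\cdot)\|_{L^p}\,\|h_{\mathbf t}\chi_E\|_{L^{p'}},
\]
and this works for $p>1$ since then $p'<\infty$ and $\|h_{\mathbf t}\chi_E\|_{L^{p'}}\to0$ as $|E|\to0$. But for $p=1$ one has $p'=\infty$, and $\|h_{\mathbf t}\chi_E\|_{L^\infty}$ need not be small for small $|E|$; your inequality degenerates to $\|h_{\mathbf t}\|_\infty\int_E|f(\mathbf s,\cdot)|$, which would require uniform integrability of $\{f(\mathbf s,\cdot)\}_{\mathbf s}$ in $L^1$---something a uniform $L^1$ bound alone does not give. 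The paper sidesteps this by Banach--Alaoglu: for $p=1$ a subsequence of $\{f(\boldsymbol\varepsilon_k,\cdot)\}$ converges weak-$*$ in the space of finite measures, continuity of $f$ on $\overline{\mathscr U}$ forces the limit to be $f(\mathbf 0,\cdot)\,d\mathbf h$, and since $h_{\mathbf t}(\cdot^{-1}\mathbf g)\in C_0$ the convolution passes to the limit. Alternatively you can repair your argument directly by using that $f(\mathbf s,\cdot)\to f(\mathbf 0,\cdot)$ uniformly on compacta (immediate from continuity on $\overline{\mathscr U}$) together with the tightness you already extracted from the Schwartz decay of $h_{\mathbf t}$.
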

The reproducing formulae for $p= 1,2$ will be used.
We will use   parabolic  maximum principle and   parabolic version of subharmonicity of $|f|^p$   to prove this proposition   and
Proposition \ref{prop:heat-kernel-Hp} in Section \ref{subharmonicity}.

 \section{   The  Cauchy-Szeg\H o kernel  and associated integral operators  }
 \subsection{  The  Cauchy-Szeg\H o kernel } Let us  deduce the Cauchy-Szeg\H o kernel  on $   \mathscr U $ from   known formulae for Cauchy-Szeg\H o kernels on general Siegel domains \cite{KS}.

Let $\Omega\subset    \mathbb{R}^{m}$ be a regular cone, i.e. it is a nonempty open convex  with vertex at $0$ and
containing no entire straight line. The dual cone  $\Omega^*$ is the set of all $\lambda\in(\mathbb{R}^{m})^*$  such that $\langle\lambda, x\rangle > 0$ for all
 $x\in \overline{\Omega} \setminus\{0\}$. Given a regular cone $\Omega\subset    \mathbb{R}^{m}$, we say that an Hermitian form
 $\Phi: \mathbb{C}^{n }\times \mathbb{C}^{n }\rightarrow \mathbb{C}^{m}$ is {\it $\Omega$-positive} if $\Phi(z ,z )\in \overline{\Omega}$ for any $z
 \in\mathbb{C}^{n }$ and $\Phi(z ,z )=0$ only if $z =0$. The  domain
\begin{equation*}
   {\mathcal D }: = \left\{ \zeta= (\zeta',\zeta'') \in \mathbb{C}^{m}\times \mathbb{C}^{n };  \operatorname{Im} \zeta' - \Phi(\zeta'',\zeta'' )\in \Omega\right\}
\end{equation*}
is called the {\it  Siegel domain determined by $\Phi$ and $\Omega$}. Its {\it Silov boundary} ${\mathcal S }$ is the CR submanifold defined by the equation
$
  \operatorname{Im} \zeta' - \Phi(\zeta'',\zeta'' )=0 ,
$ which has the structure of   a nilpotent Lie group  of step two.

Holomorphic  Hardy space
$H^2(\mathcal D )$ consists of all holomorphic functions $f $ on $\mathcal D $ such that
\begin{equation}\label{eq:H2}
   \|f\|_{H^2(\mathcal D )}^2=\sup_{y \in \Omega}\int_{\mathbb{R}^{m}\times \mathbb{C}^{n }}|f(x +\mathbf{i}y +\mathbf{i}\Phi(\zeta''  ,\zeta''  ),\zeta''  )|^2dx
   d\zeta'' <\infty.
\end{equation}
The  Cauchy-Szeg\H o projection $\mathcal P  $ from $L^2(\mathcal{S})$ to
$H^2(\mathcal D )$ has a reproducing kernel $S(\zeta, \eta)$, the {\it Cauchy-Szeg\H o kernel}, which is holomorphic in $\zeta\in \mathcal D $ and anti-holomorphic
in $\eta\in\mathcal D $. Namely, for $f\in H^2(\mathcal D )$, we have
\begin{equation}\label{eq:Szego-projection}
   f(\zeta)=\int_{{\mathcal S}} S(\zeta,\eta)f(\eta)d\beta(\eta),
\end{equation}
where $d\beta$ is the measure corresponding to $dx
   d\zeta''$ in \eqref{eq:H2}.

For $\lambda\in \Omega^*$, denote $B_\lambda(\zeta'',\eta'') :=4\langle\lambda,\Phi(\zeta'',\eta'')\rangle$,  an Hermitian form on $\mathbb{C}^{n }$, whose
associated Hermitian matrix is also denoted by  $B_\lambda$. The explicit formula for $S(\zeta, \eta)$ \cite[Theorem 5.1]{KS} is known
 as
\begin{equation}\label{eq:Szego}
   S(\zeta, \eta)=\int_{\Omega^*}e^{-2\pi\langle\lambda,\rho(\zeta,\eta)\rangle}\det B_\lambda\, d\lambda,
\end{equation}
for $\zeta=(\zeta',\zeta'')\in \mathcal{D}, \eta=(\eta',\eta'')\in  \mathcal{S}$, where
\begin{equation*}
   \rho(\zeta,\eta)=\mathbf{i}\left(\overline{\eta'}-\zeta'\right)-2\Phi(\zeta'' ,\eta'').
\end{equation*}

The product $  {\mathcal U}  $ of two Siegel upper half spaces ${\mathcal U}_1$ and $  {\mathcal U}_2$ in \eqref{eq:U-alpha} is a  Siegel domain   with the cone $\Omega=
\mathbb{R}^{2}_+\subset    \mathbb{R}^{2}$, $m=2$, $n=n_1+n_2$, and
 \begin{equation*}
    \Phi(\widetilde  {\mathbf z} ,\widetilde  {\mathbf z}')=(\Phi_1(\widetilde  {\mathbf z }_1 ,\widetilde  {\mathbf z }_1') ,\Phi_2  (\widetilde  {\mathbf z }_2
    ,\widetilde  {\mathbf z
    }_2')),\qquad \widetilde  {\mathbf z}=(\widetilde  {\mathbf z }_1 ,\widetilde  {\mathbf z }_2)\in \mathbb{C}^{n_1}\times  \mathbb{C}^{n_2}=\mathbb{C}^{n },
 \end{equation*}
  with
$
    \Phi_\alpha(\widetilde  {\mathbf z }_\alpha ,\widetilde  {\mathbf z }_\alpha'):=  \langle\widetilde  {\mathbf z }_\alpha ,\widetilde  {\mathbf z
    }_\alpha'\rangle
$, where $\langle\cdot ,\cdot\rangle$ is the standard Hermitian inner product,
   and $\rho =(\rho_1 ,\rho_2 )$  with
   \begin{equation}\label{eq:rho}
   \rho_\alpha(\zeta,\eta)=\mathbf{i}\left(\overline{\widetilde  w'_\alpha }-\widetilde  w_\alpha\right)-2\langle\widetilde  {\mathbf z}_\alpha ,\widetilde
   {\mathbf
   z}_\alpha'\rangle,
\end{equation}for   $\zeta=(\zeta',\zeta'')=(\widetilde  {\mathbf w},\widetilde  {\mathbf z})\in\mathcal{D}$, $\eta=(\eta',\eta'')=(\widetilde  {\mathbf w}',\widetilde  {\mathbf
z}')\in   {\mathcal S}$.
  Then, $ B_\lambda(\widetilde  {\mathbf z }  ,\widetilde  {\mathbf z } ')=4\lambda_1\langle\widetilde  {\mathbf z }_1 ,\widetilde  {\mathbf z
    }_1'\rangle+ 4\lambda_2\langle\widetilde  {\mathbf z }_2 ,\widetilde  {\mathbf z
    }_2'\rangle$, i.e.
\begin{equation*}
   B_\lambda=4
   \left(\begin{matrix} \lambda_1I_{n_1}&0\\
   0&\lambda_2I_{n_2}
  \end{matrix}\right),
\end{equation*}
and so
 $\det B_\lambda=4^{n_1+n_2} \lambda_1^{n_1} \lambda_2^{n_2}  $.
It follows from \eqref{eq:Szego} that
\begin{equation} \label{eq:Szego2}\begin{split}
   S(\zeta, \eta)&= \int_{\mathbb{R}^{2}_+ }e^{-2\pi\sum_{\alpha=1}^{2} \lambda_\alpha\rho_\alpha(\zeta,\eta) }4^{n_1+n_2}\lambda_1^{n_1} \lambda_2^{n_2}  d\lambda
   =\prod_{\alpha=1}^2
 \frac {c_\alpha}{  \rho_\alpha(\zeta,\eta) ^{n_\alpha+1}  },\qquad c_\alpha:=\frac {  n_\alpha! }{ 4(\frac \pi 2)^{n_\alpha+1}},
\end{split}\end{equation}
by applying
\begin{equation*}
   \int_0^{+\infty} e^{-2\pi  s\theta }s^{m}  ds= \frac {m!}{(2\pi \theta)^{m+1}}
\end{equation*}
for $\theta\in \mathbb{C}$ with ${\rm Re}\,\theta>0$.

We need to transform the  Cauchy-Szeg\H o kernel \eqref{eq:Szego2} on $  \mathcal{ U }$  to that on $   \mathscr U $.
\begin{cor} \label{cor:S} The  Cauchy-Szeg\H o kernel of the  Cauchy-Szeg\H o projection $\mathcal P  $ on $   \mathscr U $ is
\begin{equation}\label{eq:flat-Szego1}
   S((\mathbf{t},\mathbf{g}),\mathbf{g}')=\prod_{\alpha=1}^2 S_\alpha\left(t_\alpha,\mathbf{g}_\alpha'{}^{-1}\mathbf{g}_\alpha\right),
\end{equation} for $\mathbf{g}=(\mathbf{g}_1,\mathbf{g}_2), $  $\mathbf{g}'=(\mathbf{g}_1',\mathbf{g}_2') \in \mathscr H_1\times\mathscr H_2 $,
$\mathbf{t}=(t_1,t_2)\in \mathbb{ {R}}_+^2$,
where
\begin{equation}\label{eq:flat-Szego2}
   S_\alpha(t_\alpha, \mathbf{h}_\alpha)=
 \frac {c_\alpha}{ (|{\mathbf z}_\alpha |^2 + t_\alpha- \mathbf{i} s_\alpha ) ^{n_\alpha+1}    },
\end{equation}if we write $\mathbf{h}_\alpha =(s_\alpha ,\mathbf{z}_\alpha )\in \mathscr H_\alpha$, is the  Cauchy-Szeg\H o kernel of the  Cauchy-Szeg\H o projection $\mathcal P_\alpha $ on $   \mathscr U_\alpha $.
   \end{cor}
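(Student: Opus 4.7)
The plan is to transfer the known Siegel-domain formula \eqref{eq:Szego2} through the quadratic biholomorphism $\pi$. Since $\pi^{*}$ from \eqref{eq:isomorphism} is an isometric isomorphism $H^{2}(\mathcal{U})\to H^{2}(\mathscr{U})$ and the restriction of $\pi$ to the respective Silov boundaries is a diffeomorphism that preserves the Lebesgue measure (the coordinate change only modifies $\operatorname{Im}\widetilde{w}_{\alpha}$ by the fixed quantity $|\mathbf{z}_{\alpha}|^{2}$, so the Jacobian in the $(\widetilde{s}_{\alpha},\widetilde{\mathbf{z}}_{\alpha})$-variables is unity), the reproducing identity \eqref{eq:Szego-projection} for $f\in H^{2}(\mathcal{U})$ pulls back to $\pi^{*}f(\mathbf{t},\mathbf{g})=\int_{\mathscr{H}_{1}\times\mathscr{H}_{2}}S(\pi(\mathbf{t},\mathbf{g}),\pi(\mathbf{0},\mathbf{g}'))\,(\pi^{*}f)(\mathbf{0},\mathbf{g}')\,d\mathbf{g}'$. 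So the Cauchy-Szeg\H o kernel on $\mathscr{U}$ is exactly $S(\pi(\mathbf{t},\mathbf{g}),\pi(\mathbf{0},\mathbf{g}'))$, and the whole task reduces to rewriting $\rho_{\alpha}$ from \eqref{eq:rho} in the flat coordinates.

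Next, I would substitute into \eqref{eq:rho} the boundary values $\widetilde{w}_{\alpha}=s_{\alpha}+\mathbf{i}t_{\alpha}+\mathbf{i}|\mathbf{z}_{\alpha}|^{2}$, $\widetilde{w}_{\alpha}'=s_{\alpha}'+\mathbf{i}|\mathbf{z}_{\alpha}'|^{2}$, and $\widetilde{\mathbf{z}}_{\alpha}=\mathbf{z}_{\alpha}$, $\widetilde{\mathbf{z}}_{\alpha}'=\mathbf{z}_{\alpha}'$. A direct computation gives
\begin{equation*}
\rho_{\alpha}=|\mathbf{z}_{\alpha}|^{2}+|\mathbf{z}_{\alpha}'|^{2}+t_{\alpha}+\mathbf{i}(s_{\alpha}'-s_{\alpha})-2\langle\mathbf{z}_{\alpha},\mathbf{z}_{\alpha}'\rangle.
\end{equation*}
The key manipulation is to recognize this as a left-invariant expression. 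Writing $\mathbf{g}_{\alpha}'{}^{-1}\mathbf{g}_{\alpha}=(s_{\alpha}'',\mathbf{z}_{\alpha}'')$ and using the multiplication law \eqref{eq:multiplication}, one has $\mathbf{z}_{\alpha}''=\mathbf{z}_{\alpha}-\mathbf{z}_{\alpha}'$ and $s_{\alpha}''=s_{\alpha}-s_{\alpha}'+2\operatorname{Im}\langle\mathbf{z}_{\alpha},\mathbf{z}_{\alpha}'\rangle$, whence $|\mathbf{z}_{\alpha}''|^{2}+t_{\alpha}-\mathbf{i}s_{\alpha}''=|\mathbf{z}_{\alpha}|^{2}+|\mathbf{z}_{\alpha}'|^{2}+t_{\alpha}+\mathbf{i}(s_{\alpha}'-s_{\alpha})-2\langle\mathbf{z}_{\alpha},\mathbf{z}_{\alpha}'\rangle$ after splitting the Hermitian inner product into real and imaginary parts. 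This identifies $\rho_{\alpha}=|\mathbf{z}_{\alpha}''|^{2}+t_{\alpha}-\mathbf{i}s_{\alpha}''$.

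Inserting this identity into the product formula \eqref{eq:Szego2} immediately yields \eqref{eq:flat-Szego1} with $S_{\alpha}$ as in \eqref{eq:flat-Szego2}. The factorization into a product over $\alpha=1,2$ is inherited from the product structure of $\mathcal{U}=\mathcal{U}_{1}\times\mathcal{U}_{2}$, and the one-factor formula \eqref{eq:flat-Szego2} is simply the special case $n_{2}=0$, or equivalently the same derivation applied to a single Siegel upper half space.

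The proof is essentially a bookkeeping exercise; there is no serious analytic obstacle, only the algebraic verification of the boxed identity above. The one place to be careful is the sign in $\operatorname{Im}\langle\mathbf{z}_{\alpha},\mathbf{z}_{\alpha}'\rangle$ versus $\operatorname{Im}\langle\mathbf{z}_{\alpha}',\mathbf{z}_{\alpha}\rangle$ when computing the inverse in $\mathscr{H}_{\alpha}$, and the observation that $|\mathbf{z}-\mathbf{z}'|^{2}-2\mathbf{i}\operatorname{Im}\langle\mathbf{z},\mathbf{z}'\rangle=|\mathbf{z}|^{2}+|\mathbf{z}'|^{2}-2\langle\mathbf{z},\mathbf{z}'\rangle$, which is what makes the Cauchy-Szeg\H o kernel transform into a left-invariant expression on the Heisenberg group.
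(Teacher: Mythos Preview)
Your proof is correct and follows essentially the same route as the paper's: pull back the reproducing formula \eqref{eq:Szego-projection} through the measure-preserving biholomorphism $\pi$, substitute the flat coordinates \eqref{eq:pi} into $\rho_\alpha$, and recognize the resulting expression as $|\mathbf{z}_\alpha''|^{2}+t_\alpha-\mathbf{i}s_\alpha''$ with $(s_\alpha'',\mathbf{z}_\alpha'')=\mathbf{g}_\alpha'^{-1}\mathbf{g}_\alpha$ via the Heisenberg multiplication law \eqref{eq:multiplication}. The paper performs the same computation (its \eqref{eq:flat-Szego5} is exactly your identity for $\rho_\alpha$), only writing the imaginary part as $-2\operatorname{Im}\langle\mathbf{z}_\alpha',\mathbf{z}_\alpha\rangle$ instead of $+2\operatorname{Im}\langle\mathbf{z}_\alpha,\mathbf{z}_\alpha'\rangle$.
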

\begin{proof} Recall that for $f\in H^2(\mathscr U)$,  the function $\widetilde f$ defined by \eqref{eq:tilde-f-f}
 belongs to $H^2( \mathcal{U} )$. Using the    Cauchy-Szeg\H o kernel \eqref{eq:Szego2} on $\mathcal{U}$ and applying the reproducing formula
\eqref{eq:Szego-projection}
to  $\widetilde f$, we get
\begin{equation}\label{eq:flat-Szego3}\begin{split}&
  f(\widetilde  w_1-\mathbf{i}| \widetilde  {\mathbf z}_1|^2,\widetilde w_2-\mathbf{i}|\widetilde  {\mathbf z}_2|^2,\widetilde   {\mathbf z}  )
  = \int_{{\mathbb{R}^2\times \mathbb{C}^{n_1+n_2}}}f(\widetilde  w_1'-\mathbf{i}|\widetilde  {\mathbf z}_1'|^2,\widetilde  w_2'-\mathbf{i}|\widetilde  {\mathbf
  z}_2'|^2,\widetilde
  {\mathbf z} '  ) \prod_{\alpha=1}^2
 \frac {c_\alpha}{ \rho_\alpha(\zeta,\eta) ^{n_\alpha+1}       }d \operatorname{Re}\widetilde  {\mathbf w}' d\widetilde   {\mathbf z}',
\end{split} \end{equation}for $\zeta=(\widetilde  {\mathbf w},\widetilde  {\mathbf z})\in   {\mathcal U}, \eta=(\widetilde  {\mathbf w}',\widetilde  {\mathbf
z}')\in
{\mathcal S}$.
Write
\begin{equation}\label{eq:flat-Szego4}
   \widetilde  w_\alpha =   s_\alpha+\mathbf{i}t_\alpha+\mathbf{i} | {\mathbf z}_\alpha |^2 , \qquad  \widetilde  w_\alpha '=   s_\alpha' + \mathbf{i} | {\mathbf
   z}_\alpha '|^2 ,  \qquad
   \widetilde  {\mathbf z}_\alpha={\mathbf z}_\alpha ,  \qquad  \widetilde  {\mathbf z}_\alpha'={\mathbf z}_\alpha',
\end{equation} with $t_\alpha>0$.
  By definition \eqref{eq:rho}, we have
\begin{equation}\label{eq:flat-Szego5}\begin{split}
   \rho_\alpha(\zeta,\eta)& =\mathbf{i}\left(\overline{\widetilde  w'_\alpha{} }-\widetilde  w_\alpha\right)-2\Phi_\alpha(\widetilde
   {\mathbf z}_\alpha,\widetilde  {\mathbf z}_\alpha')\\&=-\mathbf{i}(s_\alpha-s_\alpha')+\operatorname{Im }\widetilde   w_\alpha +\operatorname{Im } \widetilde
   {w }_\alpha' -2\langle\widetilde  {\mathbf z}_\alpha ,\widetilde  {\mathbf
   z}_\alpha'\rangle\\&=
    -\mathbf{i}(s_\alpha-s_\alpha')+ t_\alpha+ | {\mathbf z}_\alpha |^2+ | {\mathbf z}_\alpha '|^2-2\langle {\mathbf z}_\alpha , {\mathbf z}_\alpha'\rangle\\
   &= - \mathbf{i}\left(s_\alpha-s_\alpha'-2\operatorname{Im}\langle{{\mathbf z}}_\alpha',    {\mathbf z}_\alpha\rangle\right)+ t_\alpha+ | {\mathbf z}_\alpha -
   {\mathbf z}_\alpha'|^2 .
\end{split}\end{equation}
Substituting \eqref{eq:flat-Szego4}-\eqref{eq:flat-Szego5} to \eqref{eq:flat-Szego3} to get the reproducing formula
\begin{equation*}
  f(  \mathbf{w}  ,  {\mathbf z}  )=\int_{{\mathbb{R}^2\times \mathbb{C}^{n_1+n_2}}}f(  \mathbf{s} '   , {\mathbf z} '  ) \prod_{\alpha=1}^2
 \frac {c_\alpha}{ (|{\mathbf z}_\alpha-  {\mathbf z}_\alpha'|^2 + t_\alpha- \mathbf{i}(s_\alpha-s_\alpha'-2\operatorname{Im}\langle{{\mathbf z}}_\alpha',
 {\mathbf
 z}_\alpha\rangle) ^{n_\alpha+1}    }d   {\mathbf s}' d   {\mathbf z}'.
\end{equation*}
  The result follows from the multiplication law \eqref{eq:multiplication} of  the Heisenberg group $\mathscr H_\alpha$.
  \end{proof}

\subsection{Estimates for integral operators  associated to the    Cauchy-Szeg\H o  kernel}

 For an integral operator $T$ with kernel $K(\mathbf{g}_1,\mathbf{g}_2,\mathbf{g}_1' ,\mathbf{g}_2' )$ on $\mathscr H_1\times\mathscr H_2 $, i.e.
 \begin{equation*}
    Tf( \mathbf{g}_1,\mathbf{g}_2)=\int_{\mathscr H_1\times\mathscr H_2}K(\mathbf{g}_1,\mathbf{g}_2,\mathbf{g}_1' ,\mathbf{g}_2' )f(
    \mathbf{g}_1',\mathbf{g}_2')d\mathbf{g}_1'd\mathbf{g}_2',
 \end{equation*}
 and  for fixed $\mathbf{g}_1,\mathbf{g}'_1$, we denote by $K^{(1)}(\mathbf{g}_1,\mathbf{g}'_1)$ the integral operator acting on functions on $\mathscr H_2$ with
 the
   kernel
 \begin{equation*}
    K^{(1)}(\mathbf{g}_1,\mathbf{g}'_1) ( \mathbf{g}_2, \mathbf{g}_2' ):=K(\mathbf{g}_1,\mathbf{g}_2,\mathbf{g}_1' ,\mathbf{g}_2' ) .
 \end{equation*}
 The integral operator $K^{(2)}(\mathbf{g}_2,\mathbf{g}'_2)$  is defined similarly.

The composition of the operator $T$ with   $e^{-\tau_1\triangle_1}$ is the operator $T\circ e^{-\tau_1\triangle_1}$ with kernel
 \begin{equation*}
    K_{\tau_1,0}(\mathbf{g},\mathbf{g}'):=\int_{\mathscr H_1}K(\mathbf{g}_1,\mathbf{g}_2,\mathbf{h}_1
    ,\mathbf{g}_2' )h_{\tau_1}^{(1)}(\mathbf{h}_1^{-1}\mathbf{g}_1' )d\mathbf{h}_1
 \end{equation*}by $h_{\tau_1}^{(1)}(\mathbf{h}_1 ^{-1}  )=h_{\tau_1}^{(1)}(\mathbf{h}_1   )$  for $\mathbf{h}_1\in \mathscr H_1$ (cf. \cite{Gaveau}). Similarly,
$ K_{0,\tau_2}(\mathbf{g},\mathbf{g}')$ and $ K_{\tau_1,\tau_2}(\mathbf{g},\mathbf{g}')$ are integral kernels of $T\circ e^{-\tau_2\triangle_2}$ and $T\circ e^{-\tau_1\triangle_1}\circ e^{-\tau_2\triangle_2}$, respectively.

For fixed $\mathbf{t}\in \mathbb{R}^2_+$, denote
 \begin{equation*}K(\mathbf{g},\mathbf{g}'; \mathbf{t}):= S((\mathbf{t},\mathbf{g}),\mathbf{g}').
     \end{equation*}
  We show that $K(\mathbf{g},\mathbf{g}'; \mathbf{t})$ satisfies the condition of rough bi-parameter  Calder\'on-Zygmund kernels   uniformly for $\mathbf{t} $,   which enables us to prove    that the  Cauchy-Szeg\H o projection maps a $(  2, N)$-atom on $ \mathscr H_1\times \mathscr H_2 $  to an $H^1(\mathscr U)$ function in the next section.  See \cite{DM} \cite{DLY} for rough    Calder\'on-Zygmund operators. The structure of rough bi-parameter  Calder\'on-Zygmund kernels  is especially suitable for estimating the action of corresponding operators on $(  2,N)$-atoms on   product spaces.

 \begin{prop}\label{prop:K-estimate}For any $\gamma_1,\tau_1,\gamma_2,\tau_2>0$, there exists an absolute constant $C>0$ such that
  \begin{equation}\label{eq:K-estimate} \begin{split}  &
     \int_{\|\mathbf{g}_1'{}^{-1}\mathbf{g}_1\|_1>\gamma_1\tau_1}\left\|  K^{(1)}(\mathbf{g}_1,\mathbf{g}'_1;
     \mathbf{t})-K_{\tau_1^2,0}^{(1)}(\mathbf{g}_1,\mathbf{g}'_1; \mathbf{t})  \right\|_{L^2(\mathscr H_2)\rightarrow L^2(\mathscr H_2)}d\mathbf{g}_1\leq   C
     \gamma_1^{-2},\\ &
     \int_{\|\mathbf{g}_2'{}^{-1}\mathbf{g}_2\|_2>\gamma_2\tau_2}\left\|  K^{(2)}(\mathbf{g}_2,\mathbf{g}'_2;
     \mathbf{t})-K_{0,\tau_2^2}^{(2)}(\mathbf{g}_2,\mathbf{g}'_2; \mathbf{t})  \right\|_{L^2(\mathscr H_1)\rightarrow L^2(\mathscr H_1)}d\mathbf{g}_2\leq   C
     \gamma_2^{-2},\\
     &\int_{\substack {\|\mathbf{g}_1'{}^{-1}\mathbf{g}_1\|_1>\gamma_1\tau_1\\
     \|\mathbf{g}_2'{}^{-1}\mathbf{g}_2\|_2>\gamma_2\tau_2}}\left | K(\mathbf{g},\mathbf{g}'; \mathbf{t})-
     K_{\tau_1^2,0}(\mathbf{g},\mathbf{g}'; \mathbf{t})-K_{0,\tau_2^2}(\mathbf{g},\mathbf{g}'; \mathbf{t})+K_{\tau_1^2,\tau_2^2}(\mathbf{g},\mathbf{g}';
     \mathbf{t})\right|d\mathbf{g}  \leq     C \gamma_1^{-2} \gamma_2^{-2}   .
 \end{split} \end{equation}
\end{prop}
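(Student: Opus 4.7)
The key observation is the tensor-product structure of the kernel
$K(\mathbf{g},\mathbf{g}';\mathbf{t}) = S_1(t_1,\mathbf{g}_1'^{-1}\mathbf{g}_1)\,S_2(t_2,\mathbf{g}_2'^{-1}\mathbf{g}_2)$
from Corollary \ref{cor:S}, combined with the fact that $e^{-\tau_\alpha^2\triangle_\alpha}$ acts only on the $\alpha$-th factor. Setting
\begin{equation*}
M_\alpha(t_\alpha,\tau_\alpha;\mathbf{h}_\alpha) := S_\alpha(t_\alpha,\mathbf{h}_\alpha) - \bigl(S_\alpha(t_\alpha,\cdot)*h_{\tau_\alpha^2}^{(\alpha)}\bigr)(\mathbf{h}_\alpha),
\end{equation*}
the three differences of kernels appearing in \eqref{eq:K-estimate} factor as $M_1\cdot S_2$, $S_1 \cdot M_2$, and $M_1 \cdot M_2$ respectively (each factor evaluated at $\mathbf{g}_\alpha'^{-1}\mathbf{g}_\alpha$). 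Everything therefore reduces to a one-variable core estimate
\begin{equation*}
\int_{\|\mathbf{h}\|_\alpha > \gamma\tau} |M_\alpha(t,\tau;\mathbf{h})|\,d\mathbf{h} \leq C\gamma^{-2}\qquad\text{uniformly in } t>0,
\end{equation*}
together with uniform $L^2$-boundedness of convolution with $S_\alpha(t_\alpha,\cdot)$.

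\textbf{Step 1 ($L^2$-boundedness).} I would observe that $(S_\alpha(t,\cdot)*f)(\mathbf{g}_\alpha) = (\mathcal{P}_\alpha f)(t,\mathbf{g}_\alpha)$ is the holomorphic extension of the Cauchy-Szeg\H o projection of $f$ evaluated at height $t$. Since $\mathcal{P}_\alpha$ is an $L^2$-orthogonal projection and, for any $F\in H^2(\mathscr U_\alpha)$, $\|F(t,\cdot)\|_{L^2(\mathscr H_\alpha)}\leq \|F\|_{H^2(\mathscr U_\alpha)}$, convolution with $S_\alpha(t,\cdot)$ has $L^2$-operator norm $\leq 1$ uniformly in $t$; the variant with the extra heat factor $h_{\tau^2}^{(\alpha)}$ is then immediate since $e^{-\tau^2\triangle_\alpha}$ is also a contraction.

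\textbf{Step 2 (core estimate).} Using $\int h_{\tau^2}^{(\alpha)}=1$, I would write
\begin{equation*}
M_\alpha(t,\tau;\mathbf{h}) = \int_{\mathscr H_\alpha}\left[ S_\alpha(t,\mathbf{h}) - S_\alpha(t,\mathbf{k}^{-1}\mathbf{h})\right] h_{\tau^2}^{(\alpha)}(\mathbf{k})\,d\mathbf{k}.
\end{equation*}
Symmetry of the sub-Laplacian gives $h_{\tau^2}^{(\alpha)}(\mathbf{k}^{-1})=h_{\tau^2}^{(\alpha)}(\mathbf{k})$, so the first-order term of the Taylor expansion of $S_\alpha(t,\mathbf{k}^{-1}\mathbf{h})$ at $\mathbf{k}=\mathbf{0}$ integrates to zero by odd symmetry in $\mathbf{k}$. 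A second-order expansion, combined with $\int \|\mathbf{k}\|_\alpha^2\, h_{\tau^2}^{(\alpha)}(\mathbf{k})\,d\mathbf{k}\leq C\tau^2$ (by scaling) and pointwise estimates on the second derivatives of $S_\alpha$ obtained by direct differentiation of \eqref{eq:flat-Szego2}, yields
\begin{equation*}
|M_\alpha(t,\tau;\mathbf{h})| \leq C\tau^2\bigl(t+\|\mathbf{h}\|_\alpha^2\bigr)^{-(n_\alpha+2)}.
\end{equation*}
Integrating over $\|\mathbf{h}\|_\alpha > \gamma\tau$, and splitting into the cases $t\leq(\gamma\tau)^2$ and $t>(\gamma\tau)^2$, a direct computation using homogeneous dimension $Q_\alpha=2n_\alpha+2$ gives the claimed bound $C\gamma^{-2}$.

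\textbf{Step 3 (assembly).} For the first inequality of \eqref{eq:K-estimate}, the operator $K^{(1)}-K^{(1)}_{\tau_1^2,0}$ on $L^2(\mathscr H_2)$ is the scalar $M_1(t_1,\tau_1;\mathbf{g}_1'^{-1}\mathbf{g}_1)$ times convolution with $S_2(t_2,\cdot)$; by Step 1 its $L^2$-operator norm equals $|M_1|$ times a constant $\leq 1$, so integrating in $\mathbf{g}_1$ over $\|\mathbf{g}_1'^{-1}\mathbf{g}_1\|_1>\gamma_1\tau_1$ reduces to the core estimate of Step 2. The second inequality is symmetric. For the third, the integrand equals $|M_1|\cdot|M_2|$ on a product region, so Fubini and two applications of the core estimate give $C\gamma_1^{-2}\gamma_2^{-2}$.

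\textbf{Main obstacle.} The delicate point is Step 2: on the Heisenberg group $\mathbf{k}\mapsto \mathbf{k}^{-1}\mathbf{h}$ couples the vertical coordinate to $\mathbf{h}$ through the commutator term $2\operatorname{Im}\langle\mathbf{z}_k,\mathbf{z}_h\rangle$, so a direct second derivative in $\mathbf{k}$ produces terms carrying a factor $|\mathbf{z}_h|\leq\|\mathbf{h}\|_\alpha$. These must be absorbed by the sharper decay of the higher $s$-derivatives of $S_\alpha$ (each $\partial_s$ gains a factor $(t+\|\mathbf{h}\|_\alpha^2)^{-1}$ rather than $(t+\|\mathbf{h}\|_\alpha^2)^{-1/2}$), and careful bookkeeping is needed to obtain the clean bound with exponent $n_\alpha+2$; this is precisely what makes the final gain $\gamma^{-2}$ (rather than $\gamma^{-1}$) come out.
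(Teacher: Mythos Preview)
Your overall architecture---tensor factorization, $L^2$-boundedness of $\mathcal P_\alpha$, reduction to a one-variable core estimate for $M_\alpha$, and Fubini assembly---matches the paper exactly. The difference lies entirely in how the core estimate for $M_\alpha$ is obtained, and here the paper's route is much shorter than yours.

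The paper does \emph{not} Taylor-expand the convolution $S_\alpha(t,\cdot)*h_{\tau^2}^{(\alpha)}$. Instead it observes that $S_\alpha(t_\alpha,\cdot)$ is itself a holomorphic $H^2(\mathscr U_\alpha)$ function, hence satisfies the heat equation (Proposition~\ref{prop:regular-sublap}); by the reproducing formula of Proposition~\ref{prop:heat-kernel-H1} this gives the exact identity
\[
\bigl(S_\alpha(t_\alpha,\cdot)*h_{\tau_\alpha^2}^{(\alpha)}\bigr)(\mathbf h_\alpha)=S_\alpha(t_\alpha+\tau_\alpha^2,\mathbf h_\alpha).
\]
Consequently $M_\alpha(t,\tau;\mathbf h)=S_\alpha(t,\mathbf h)-S_\alpha(t+\tau^2,\mathbf h)$ is an explicit difference of rational functions, and a two-line algebraic manipulation of the formula \eqref{eq:flat-Szego2} (factor $A^{n_\alpha+1}-B^{n_\alpha+1}=(A-B)\sum A^aB^{n_\alpha-a}$ with $A-B=\tau^2$) yields directly $|M_\alpha|\le c_\alpha'\tau^2/\|\mathbf h\|_\alpha^{Q_\alpha+2}$, with no need to control second derivatives or worry about the non-commutative cross terms you flag as the main obstacle. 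Your Taylor-expansion approach is the generic Calder\'on--Zygmund strategy and would eventually work (your analysis of the commutator term is correct in outline), but it is considerably more laborious; the paper's semigroup identity replaces all of that by elementary algebra and is the insight you are missing.
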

 \begin{proof} Note that the Cauchy-Szeg\H o  kernel $S_\alpha\left(t_\alpha , \mathbf{g}_\alpha\right) $ in \eqref{eq:flat-Szego2} belongs to    holomorphic Hardy space $  H^2(\mathscr
 U_\alpha )$. Consequently,  $S_\alpha\left(t_\alpha+\cdot , \cdot\right) $ for fixed $t_\alpha>0$ has  smooth boundary value $S_\alpha\left( t_\alpha ,
 \cdot\right) $, and  also belongs to   holomorphic Hardy space $  H^2(\mathscr U_\alpha )$ by definition. Thus by Proposition \ref{prop:heat-kernel-H1} for $\mathscr U_\alpha
 $, we have
 \begin{equation}\label{eq:S-t+t}
    S_\alpha\left(t_\alpha+ s_\alpha, \mathbf{g}_\alpha\right)=\int_{\mathscr H_\alpha }h_{ {s_\alpha } }^{(\alpha)}(\mathbf{h}_\alpha ^{-1}
    \mathbf{g}_\alpha)S_\alpha\left( t_\alpha , \mathbf{h}_\alpha \right) d \mathbf{h}_\alpha
 \end{equation}
Noting that $S_\alpha\left(t_\alpha ,\mathbf{h} _\alpha^{-1} \right)=\overline{S_\alpha\left(t_\alpha ,\mathbf{h} _\alpha  \right)}$ by its expression in  Corollary \ref{cor:S},  we
get
  \begin{equation}\label{eq:K*h}\begin{split}
     K_{\tau_1^2,0}(\mathbf{g},\mathbf{g}'; \mathbf{t})&=\int_{\mathscr H_1 }h_{\tau_1^2}^{(1)}(\mathbf{h}_1  ^{-1}\mathbf{g}_1' )S_1\left(t_1
     ,\mathbf{h}_1^{-1}\mathbf{g} _1\right)d\mathbf{h}_1 \cdot S_2\left(t_2,\mathbf{g}_2'{}^{-1}\mathbf{g}_2\right)
     \\&=\overline{\int_{\mathscr H_1 }h_{\tau_1^2}^{(1)}(\mathbf{h}_1 ^{-1}\mathbf{g}_1' )S_1\left(t_1 ,\mathbf{g}
     _1^{-1}\mathbf{h}_1\right)d\mathbf{h}_1 }\cdot S_2\left(t_2,\mathbf{g}_2'{}^{-1}\mathbf{g}_2\right)\\&=\overline{ S_1\left(t_1+\tau_1^2
     ,\mathbf{g}_1^{-1}\mathbf{g}_1'\right)  }  S_2\left(t_2,\mathbf{g}_2'{}^{-1}\mathbf{g}_2\right)
     \\&=
     S_1\left(t_1+\tau_1^2,\mathbf{g}_1'{}^{-1}\mathbf{g}_1\right)S_2\left(t_2,\mathbf{g}_2'{}^{-1}\mathbf{g}_2\right) .
  \end{split} \end{equation}by   using \eqref{eq:S-t+t} and reality of $h ^{(1)}$.
Then
   \begin{equation}\label{eq:K-K}\begin{split}
    K(\mathbf{g},\mathbf{g}'; \mathbf{t})-
     K_{\tau_1^2,0}(\mathbf{g},\mathbf{g}'; \mathbf{t})&=\left[S_1\left(t_1
     ,\mathbf{g}_1'{}^{-1}\mathbf{g}_1\right)-S_1\left(t_1+\tau_1^2,\mathbf{g}_1'{}^{-1}\mathbf{g}_1\right)\right]S_2\left(t_2,\mathbf{g}_2'{}^{-1}\mathbf{g}_2\right).
 \end{split} \end{equation}

Now if  we write
$
   \mathbf{g}_\alpha'{}^{-1}\mathbf{g}_\alpha =(s_\alpha ,\mathbf{z}_\alpha )\in \mathscr H_\alpha,
$
then
  \begin{equation}\label{eq:S-S}\begin{split}
   & \left|S_\alpha\left(t_\alpha
   ,\mathbf{g_\alpha'}{}^{-1}\mathbf{g}_\alpha\right)-S_\alpha\left(t_\alpha+\tau_\alpha^2,\mathbf{g}_\alpha'{}^{-1}\mathbf{g}_\alpha\right)\right|\\=&c_\alpha
 \left|\frac {(|{\mathbf z}_\alpha |^2 + t_\alpha+\tau_\alpha^2- \mathbf{i} s_\alpha ) ^{n_\alpha+1}- (|{\mathbf z}_\alpha |^2 + t_\alpha- \mathbf{i} s_\alpha )
 ^{n_\alpha+1}  }{ (|{\mathbf z}_\alpha |^2 + t_\alpha- \mathbf{i} s_\alpha ) ^{n_\alpha+1} (|{\mathbf z}_\alpha |^2 + t_\alpha+\tau_\alpha^2- \mathbf{i} s_\alpha
 ) ^{n_\alpha+1}  }\right|\\=&c_\alpha
 \left|\frac  {\tau_\alpha^2 \sum_{a=0}^{n_\alpha} (|{\mathbf z}_\alpha |^2 + t_\alpha+\tau_\alpha^2- \mathbf{i} s_\alpha ) ^{a} (|{\mathbf z}_\alpha |^2 +
 t_\alpha- \mathbf{i} s_\alpha ) ^{n_\alpha-a} }{(|{\mathbf z}_\alpha |^2 + t_\alpha- \mathbf{i} s_\alpha ) ^{n_\alpha+1}  (|{\mathbf z}_\alpha |^2 +
 t_\alpha+\tau_\alpha^2- \mathbf{i} s_\alpha ) ^{n_\alpha+1} }\right|
 \\ \leq &c_\alpha'
  \frac {\tau_\alpha^2   }{ ||{\mathbf z}_\alpha |^2  - \mathbf{i} s_\alpha | ^{n_\alpha+2}  } =
 c_\alpha'
 \frac { \tau_\alpha^2   }{  \| \mathbf{g}_\alpha'{}^{-1}\mathbf{g}_\alpha  \|_\alpha^{Q_\alpha+2 } }
 \end{split} \end{equation}
 by Corollary \ref{cor:S}, where $c_\alpha':=c_\alpha (n_\alpha+1)$.
Therefore, for $f \in L^2(\mathscr H_2)$,
   \begin{equation*}\begin{split}
    & \left\| \left[K^{(1)}(\mathbf{g}_1,\right.\right. \mathbf{g}'_1; \mathbf{t})-\left.\left.K_{\tau_1^2,0}^{(1)}(\mathbf{g}_1,\mathbf{g}'_1; \mathbf{t})\right]f \right\|_{L^2(\mathscr H_2)  }\\
=  &\left(\int_{\mathscr H_2}\left|\int_{\mathscr H_2}[K(\mathbf{g},\mathbf{g}'; \mathbf{t})-
     K_{\tau_1^2,0}(\mathbf{g},\mathbf{g}'; \mathbf{t})]f( \mathbf{g}_2')d\mathbf{g}_2'\right|^2d\mathbf{g}_2\right)^{\frac 12}\\
=&\left|S_1\left(t_1 ,\mathbf{g}_1'{}^{-1}\mathbf{g}_1\right)-S_1\left(t_1+\tau_1^2,\mathbf{g}_1'{}^{-1}\mathbf{g}_1\right)\right| \left(\int_{\mathscr
H_2}\left|\int_{\mathscr H_2}S_2\left(t_2,\mathbf{g}_2'{}^{-1}\mathbf{g}_2\right)f( \mathbf{g}_2')d\mathbf{g}_2'\right|^2d\mathbf{g}_2\right)^{\frac 12}\\
= &\left|S_1\left(t_1 ,\mathbf{g}_1'{}^{-1}\mathbf{g}_1\right)-S_1\left(t_1+\tau_1^2,\mathbf{g}_1'{}^{-1}\mathbf{g}_1\right)\right|
    \|(\mathcal{P}_2f)(t_2,\cdot )\|_{L^2(\mathscr H_2) }\\
 \leq &   c_1'
 \frac { \tau_1^2   }{  \| \mathbf{g}_1'{}^{-1}\mathbf{g}_1  \|_1^{Q_1+2 }  } \|f\|_{L^2(\mathscr H_2) }
 \end{split} \end{equation*}
 by  \eqref{eq:K-K}-\eqref{eq:S-S}, where for any $t_2>0$,  $f\mapsto (\mathcal{P}_2f)(t_2,\cdot )$ is bounded on $  L^2(\mathscr H_2)$ for the  Cauchy-Szeg\H o projection
 $\mathcal{P}_2$ on $\mathscr U_2$ with the norm $ \leq 1$.
 Therefore,
 \begin{equation*}\begin{split}
    & \int_{\|\mathbf{g}_1'{}^{-1}\mathbf{g}_1\|_1>\gamma_1\tau_1}\left\|  K^{(1)}(\mathbf{g}_1,\mathbf{g}'_1;
     \mathbf{t})-K_{\tau_1^2,0}^{(1)}(\mathbf{g}_1,\mathbf{g}'_1; \mathbf{t})  \right\|_ {L^2(\mathscr H_2)\rightarrow L^2(\mathscr H_2)} d\mathbf{g}_1 \\
\leq&
     \int_{\| \mathbf{g}_1'{}^{-1}\mathbf{g}_1 \|_1>\gamma_1\tau_1}
 \frac { c_1'\tau_1^2   }{ \| \mathbf{g}_1'{}^{-1}\mathbf{g}_1  \|_1^{Q_1 +2}  }d\mathbf{g}_1
=   \frac { c_1' }{ \gamma_1^2  }  \int_{\| \mathbf{h}_1   \|_1>1}
 \frac {1 }{  \| \mathbf{h}_1   \|_1^{Q_1 +2}  }d\mathbf{h}_1
  = C \gamma_1^{-2} ,
 \end{split} \end{equation*}by rescaling and using  the invariance of the measure $d\mathbf{g}_1$. The first estimate in \eqref{eq:K-estimate} is proved. It is similar
 to show the second estimate in \eqref{eq:K-estimate}.

As in \eqref{eq:K*h}, we have
 \begin{equation}\label{eq:4K0}\begin{split}
   K_{0,\tau_2^2}(\mathbf{g},\mathbf{g}'; \mathbf{t})&=S_1\left(t_1 ,\mathbf{g}_1'{}^{-1}\mathbf{g}_1\right)
     S_2\left(t_2+\tau_2^2,\mathbf{g}_2'{}^{-1}\mathbf{g}_2\right) , \\K_{\tau_1^2,\tau_2^2}(\mathbf{g},\mathbf{g}'; \mathbf{t})
     &=S_1\left(t_1+\tau_1^2,\mathbf{g}_1'{}^{-1}\mathbf{g}_1\right)
     S_2\left(t_2+\tau_2^2,\mathbf{g}_2'{}^{-1}\mathbf{g}_2\right).
 \end{split} \end{equation}
Thus,
 \begin{equation*} \begin{split}
   &\left | K(\mathbf{g},\mathbf{g}'; \mathbf{t})-
     K_{\tau_1^2,0}(\mathbf{g},\mathbf{g}'; \mathbf{t})-K_{0,\tau_2^2}(\mathbf{g},\mathbf{g}'; \mathbf{t})+K_{\tau_1^2,\tau_2^2}(\mathbf{g},\mathbf{g}';
     \mathbf{t})\right|\\
     =&\left|\left[S_1\left(t_1 ,\mathbf{g}_1'{}^{-1}\mathbf{g}_1\right)-S_1\left(t_1+\tau_1^2,\mathbf{g}_1'{}^{-1}\mathbf{g}_1\right)\right]S_2
     \left(t_2,\mathbf{g}_2'{}^{-1}\mathbf{g}_2\right)\right.\\
     &\quad-\left.\left[S_1\left(t_1 ,\mathbf{g}_1'{}^{-1}\mathbf{g}_1\right)-S_1\left(t_1+\tau_1^2,\mathbf{g}_1'{}^{-1}\mathbf{g}_1\right)\right]
     S_2\left(t_2+\tau_2^2,\mathbf{g}_2'{}^{-1}\mathbf{g}_2\right)\right|\\
     =&\left| S_1\left(t_1 ,\mathbf{g}_1'{}^{-1}\mathbf{g}_1\right)-S_1\left(t_1+\tau_1^2,\mathbf{g}_1'{}^{-1}\mathbf{g}_1\right)\right|\left|S_2
     \left(t_2,\mathbf{g}_2'{}^{-1}\mathbf{g}_2\right)-
     S_2\left(t_2+\tau_2^2,\mathbf{g}_2'{}^{-1}\mathbf{g}_2\right) \right|\\\leq &\prod_{\alpha=1}^2
   \frac { c_\alpha'   \tau_\alpha^2   }{  \| \mathbf{g}_\alpha'{}^{-1}\mathbf{g}_\alpha  \|_\alpha^{Q_\alpha+2} }
 \end{split} \end{equation*}
by estimate \eqref{eq:S-S}. Then,
\begin{equation*} \begin{split}
   &\int_{\substack {\|\mathbf{g}_1'{}^{-1}\mathbf{g}_1\|_1>\gamma_1\tau_1\\ \|\mathbf{g}_2'{}^{-1}\mathbf{g}_2\|_2>\gamma_2\tau_2}}\left | K(\mathbf{g},\mathbf{g}';
   \mathbf{t})-
     K_{\tau_1^2,0}(\mathbf{g},\mathbf{g}'; \mathbf{t})-K_{0,\tau_2^2}(\mathbf{g},\mathbf{g}'; \mathbf{t})+K_{\tau_1^2,\tau_2^2}(\mathbf{g},\mathbf{g}';
     \mathbf{t})\right|d\mathbf{g}_1d\mathbf{g}_2\\
     \leq &
  c_1' c_2'
\int_{\substack {\|\mathbf{g}_1'{}^{-1}\mathbf{g}_1\|_1>\gamma_1\tau_1\\ \|\mathbf{g}_2'{}^{-1}\mathbf{g}_2\|_2>\gamma_2\tau_2}} \frac { \tau_1^2 \tau_2^2  }{  \|
\mathbf{g}_1'{}^{-1}\mathbf{g}_1 \|_1^{Q_1+2}  \| \mathbf{g}_2'{}^{-1}\mathbf{g}_2  \|_2^{Q_2+2}}d\mathbf{g}_1d\mathbf{g}_2\\
     \leq &
 \frac {  c_1' c_2'  }{ \gamma_1^2 \gamma_2^2  }
\int_{\substack {\| \mathbf{h}_1\|_1>1\\ \| \mathbf{h}_2\|_2>1}} \frac { 1 }{  \| \mathbf{h}_1 \|_1^{Q_1+2}  \|  \mathbf{h}_2  \|_2^{Q_2+2}}d\mathbf{h}_1d\mathbf{h}_2
=  \frac { C }{ \gamma_1^2 \gamma_2^2  }
 \end{split} \end{equation*}by rescaling and using  the invariance of the measure $d\mathbf{g}_1d\mathbf{g}_2$ again.
\end{proof}

\section{ The boundedness of the   Cauchy-Szeg\H o  projection  from $H^1(\mathscr H_1\times \mathscr H_2)$  to $H^1(\mathscr U)$   }
\subsection{ Journ\'e's covering Lemma   in the setting of spaces
of homogeneous type}
We need  an analogue on
spaces of homogeneous type of the grid of Euclidean dyadic cubes  by Christ.

\begin{lem} \label{lem:Christ} \cite{ch} Let $(X, d,\mu)$ be a space of homogeneous type. Then there exist a collection $\{I_\alpha^ k \subset X ; \alpha \in \mathcal{ I}_k ,k \in \mathbb{
Z}\}$ of open subsets of $X$, where $\mathcal{ I}_k$ is some countable index set, and
constants $C_1 , C_2 > 0$, such that
\\
(i) $\mu (X\setminus \bigcup_{\alpha} I_\alpha^ k) = 0$ for each fixed $k$, and $I_\alpha^ k\bigcap I_\beta^ k = \emptyset$ if $\alpha\neq\beta $;
\\
(ii) for all $\alpha, \beta, k,l$ with $l \geq k$, either $I_\alpha^ k\bigcap I_\beta^ l = \emptyset$ or $I_\alpha^ k \supset I_\beta^ l  $;
\\
(iii) for each $(k, \alpha)$ and each $l < k$ there is a unique $\beta$ such that $I_\alpha^ k \subset I_\beta^ l  $;
\\
(iv) $l(I_\alpha^ k ):=\operatorname{diam}(I_\alpha^ k)\leq C_12^{-k}$; and
\\
(v) each $I_\alpha^ k    $
  contains some ball $B(z_\alpha^ k,C_22^{-k} ) $, where $z_\alpha^ k\in X$.\end{lem}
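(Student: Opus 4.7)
The plan is to build the cubes $I_\alpha^k$ from Voronoi cells about maximal separated nets, then enforce the tree structure (ii)--(iii) by a parent assignment. Fix a small constant $\delta \in (0, 1)$ (to be taken small in terms of the quasi-metric constant of $d$; one then identifies $\delta$ with $2^{-1}$ at the end by absorbing powers into $C_1, C_2$). For each $k \in \mathbb{Z}$ choose, by Zorn's lemma on the separable space $X$, a maximal $\delta^k$-separated set $\{z_\alpha^k : \alpha \in \mathcal{I}_k\}$. Maximality forces $X = \bigcup_\alpha B(z_\alpha^k, \delta^k)$, while separation produces the pairwise disjoint balls $B(z_\alpha^k, \delta^k/2)$ that are already the candidates for property (v).

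Next, fix a parent map $p_k : \mathcal{I}_{k+1} \to \mathcal{I}_k$ by sending each $\beta$ to some $\alpha$ with $d(z_\beta^{k+1}, z_\alpha^k) < \delta^k$ (such an $\alpha$ exists by maximality of the coarser net). For each $x \in X$, select a coherent ancestry $\{\alpha_k(x)\}_{k \in \mathbb{Z}}$ with $\alpha_k(x) = p_k(\alpha_{k+1}(x))$ at every level and $x \in B(z_{\alpha_k(x)}^k, \delta^k)$ at every level; this is done by a diagonal procedure, picking nearest-point candidates at finer and finer scales and passing to a consistent subsequential limit along the finite-branching tree of parents. Set
\begin{equation*}
   I_\alpha^k := \{x \in X : \alpha_k(x) = \alpha\}.
\end{equation*}
With this definition properties (i)--(iii) are immediate: the $I_\alpha^k$ partition $X$ for each fixed $k$, while for $l \geq k$ two sets $I_\alpha^k, I_\beta^l$ either are disjoint or nest according to whether the $(l-k)$-fold iterated parent of $\beta$ is $\alpha$.

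The main obstacle is the quantitative control (iv)--(v), because iterating the parent map could, a priori, let $I_\alpha^k$ spread arbitrarily far from $z_\alpha^k$. To rule this out, I would estimate along the ancestry chain of an arbitrary $x \in I_\alpha^k$ by the (quasi-)triangle inequality with constant $K$:
\begin{equation*}
   d(x, z_\alpha^k) \leq \sum_{j \geq k} K^{j-k+1}\delta^j \leq C_1 \delta^k,
\end{equation*}
which converges provided $K\delta < 1$; this yields $\operatorname{diam}(I_\alpha^k) \leq C_1 \delta^k$, hence (iv). For the reverse inclusion required in (v), any $x \in B(z_\alpha^k, C_2\delta^k)$ with $C_2$ sufficiently small (smaller than $1/2$ divided by a power of $K$) is strictly closer to $z_\alpha^k$ than to any other $z_\beta^k$ by the $\delta^k$-separation of the net, so the coherence requirement on the ancestry forces $\alpha_k(x) = \alpha$, and hence $x \in I_\alpha^k$. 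Rescaling $\delta^k$ to $2^{-k}$ at the cost of enlarging $C_1$ and shrinking $C_2$ delivers the statement.
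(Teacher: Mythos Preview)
The paper does not prove this lemma; it is quoted from Christ \cite{ch} and used as a black box, so there is no in-paper argument to compare against. Your outline follows Christ's own route---maximal $\delta^k$-nets, a parent map between consecutive scales, cubes as ancestry fibres---but two steps are genuinely incomplete.

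The argument for (v) does not go through as written. You assert that $x \in B(z_\alpha^k, C_2\delta^k)$ forces $\alpha_k(x)=\alpha$ because $x$ is strictly closer to $z_\alpha^k$ than to any other center; but your coherence condition only demands $x \in B(z_{\alpha_k(x)}^k,\delta^k)$ together with compatibility under $p_k$, and since the covering balls $B(\cdot,\delta^k)$ overlap, being \emph{closest} to $z_\alpha^k$ does not single it out among the admissible choices produced by your diagonal procedure. Even if one upgrades the ancestry selection to nearest-point at every level, the parent of the level-$(k{+}1)$ ancestor need not be $\alpha$: the quasi-triangle inequality only yields $d(z_{p_k(\beta)}^k, z_\alpha^k)\le K\delta^k(1+K\delta+KC_2)$, which exceeds the separation $\delta^k$ as soon as $K>1$. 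Christ resolves this with a specific nearest-parent rule plus a fixed tie-breaking order, followed by an inductive containment lemma, and that is where the real content lies. Separately, your $I_\alpha^k$ are fibres of a choice function and partition $X$ exactly; they have no reason to be \emph{open}, whereas the statement asks for open cubes covering $X$ up to a null set. In Christ this is obtained from a small-boundary estimate of the form $\mu\{x: d(x, X\setminus I_\alpha^k)<t\,\delta^k\}\lesssim t^\eta\,\mu(I_\alpha^k)$, which your sketch does not address.
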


We can choose the absolute constant $\overline{C}$ in the definition of a $(  2,N)$-atom sufficiently large  so that we can take $C_1=\overline{C}$  and $C_2= \overline{C}^{-1}$ in (iv)-(v).
The point $z_\alpha^ k $ is called the {\it center} of the set $I_\alpha^ k    $. We also call $I_\alpha^ k    $   a {\it dyadic
cube} with diameter roughly $\overline{C}2^{-k}   $, centered at $  z_\alpha^ k   $.   We refer to the set $\lambda I_\alpha^ k    $ as the cube with the same center as
$I_\alpha^ k    $
and diameter $\lambda\operatorname{diam}(I_\alpha^ k) $.
 Let $\{I_\alpha^ k ;\alpha\in  \mathcal{ I}_k ,k \in \mathbb{
Z}  \} $ and $\{ J_\beta^ l; \beta\in \mathcal{J}_l ,l \in \mathbb{
Z} \}  $
  be dyadic cubes on the   Heisenberg groups $ \mathscr H_1$ and $ \mathscr H_2$, respectively, given by  Lemma \ref{lem:Christ}.   The open set $I_\alpha^ k \times
  J_\beta^ l   $ for $  \alpha\in  \mathcal{ I}_k , \beta\in  \mathcal{ J}_l$ ($k ,l \in \mathbb{ Z}$) is called
a {\it dyadic rectangle} in $ \mathscr H_1\times\mathscr H_2$.

For an open set $\Omega$ in $\mathscr H_1\times \mathscr H_2$ with finite measure and
  each rectangle $R = I \times J$, let $ I ^*$ be the largest dyadic cube in $\mathscr H_1$ containing $I$
such that $I ^*\times  J \subset \tilde{\Omega} $, where
$\tilde{\Omega}:= \{\mathbf{g} \in \mathscr H_1\times \mathscr H_2: M_S(\chi_{\Omega}
)(\mathbf{g}) > 1/2\}$ and $M_S$ denotes
the strong maximal function. Next, let $J^*$ be the largest dyadic cube in $\mathscr H_2$ containing $ J $ such
that $I ^*\times  J^* \subset \tilde{\tilde{\Omega} }$, where $\widetilde{\widetilde{\Omega}}:= \{\mathbf{g} \in \mathscr H_1\times \mathscr H_2;
M_S({\chi_{\widetilde\Omega}} )(\mathbf{g}) > 1/2\}$.
Now let
\begin{equation*}
   R^* =
{\breve{C}}I ^*\times  {\breve{C}}J^*,\qquad \text{ where }\quad {\breve{C}}=2\overline{C}^3.
\end{equation*}
  An application of the strong maximal function
theorem shows that
\begin{equation*}
  \left|\bigcup_{R\subset \Omega} R^*\right|\leq C\left|\tilde{\tilde{\Omega} }\right|\leq C|\tilde{\Omega} |\leq C| \Omega  |.
\end{equation*}

\begin{lem}\label{lem:Journe} \cite{HLL} Denote by $ m_\alpha(\Omega)$ the family of dyadic
rectangles $ R\subset\Omega$ which are maximal in the $\mathbf{g}_\alpha$-direction, for $\alpha = 1, 2$. Let $\Omega$ be  an open subset of $ \mathscr H_1\times \mathscr
H_2$ with finite measure and $\kappa>0$. Then
\begin{equation}\label{eq:Journe1}\begin{split}
& \sum_{R= I \times J\in m_1(\Omega)} |R| \left(\frac {\ell (J)}{\ell (J^*)}\right)^\kappa \leq    C     |\Omega|  ,
\\&
 \sum_{R= I \times J\in m_2(\Omega)} |R|  \left(\frac {\ell (I)}{\ell (I^*)}\right)^\kappa \leq    C     |\Omega|  ,
\end{split} \end{equation}for some constant $C$ independent of $\Omega$.
\end{lem}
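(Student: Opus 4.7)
The plan is to prove the first inequality in \eqref{eq:Journe1} (the second follows by the symmetric argument with the roles of $\mathscr H_1$ and $\mathscr H_2$ exchanged). I would first decompose $m_1(\Omega)$ dyadically according to the ratio $\ell(J)/\ell(J^*)\leq 1$: for each integer $n\geq 0$ set
\[
\mathcal A_n := \left\{R = I\times J\in m_1(\Omega):\, 2^{-(n+1)} < \frac{\ell(J)}{\ell(J^*)}\leq 2^{-n}\right\},
\]
so that
\[
\sum_{R\in m_1(\Omega)} |R|\left(\frac{\ell(J)}{\ell(J^*)}\right)^{\kappa} \;\leq\; \sum_{n\geq 0} 2^{-n\kappa}\sum_{R\in \mathcal A_n} |R|.
\]
It then suffices to establish a scale estimate of the form $\sum_{R\in \mathcal A_n}|R| \leq C_\kappa\cdot 2^{n\alpha}|\Omega|$ for some $\alpha<\kappa$, after which the geometric series in $n$ sums to $C_\kappa|\Omega|$.

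The second step is to extract geometric content from the maximality of $J^*$ and then pack via the strong maximal function $M_S$. For $R\in\mathcal A_n$ let $\widehat{J^*}$ denote the dyadic parent of $J^*$ in $\mathscr H_2$; by construction $I^*\times J^*\subset\widetilde{\widetilde\Omega}$ but $I^*\times \widehat{J^*}\not\subset\widetilde{\widetilde\Omega}$, so there exists $\mathbf g_R\in I^*\times \widehat{J^*}$ at which $M_S(\chi_{\widetilde\Omega})(\mathbf g_R)\leq 1/2$. Averaging $\chi_{\widetilde\Omega}$ over the rectangle $I^*\times \widehat{J^*}$ (which contains $\mathbf g_R$) forces
\[
|(I^*\times \widehat{J^*})\setminus \widetilde\Omega| \;\geq\; \tfrac12\,|I^*\times \widehat{J^*}|,
\]
and since $I^*\times J\subset \widetilde\Omega$ this ``escape mass'' is supported in $I^*\times(\widehat{J^*}\setminus J)$. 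Iterating the same averaging with the $n$-th dyadic ancestor $J^{(n)}$ of $J$ in place of $\widehat{J^*}$ yields a quantitative lower bound for the portion of $I^*\times J^{(n)}$ lying in $\widetilde{\widetilde\Omega}\setminus \widetilde\Omega$. Now pack: for fixed $J$ the cubes $I$ with $I\times J\in m_1(\Omega)$ are disjoint in $\mathscr H_1$ by maximality, and the ancestors $J^{(n)}$ at scale $n$ are themselves disjoint in $\mathscr H_2$. Grouping rectangles of $\mathcal A_n$ by their common $J^{(n)}$ and summing converts the per-rectangle inclusion into a bound on $\sum_{R\in\mathcal A_n}|R|$ controlled by $|\widetilde{\widetilde\Omega}|$; the $L^2$-boundedness of the bi-parameter strong maximal function $M_S$ on $\mathscr H_1\times \mathscr H_2$ then gives $|\widetilde\Omega|,|\widetilde{\widetilde\Omega}|\leq C|\Omega|$.

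The main technical obstacle is producing the exponent $\alpha$ strictly below the given $\kappa>0$, since a naive iteration of the $L^2$-bound of $M_S$ loses a factor per scale that need not beat a small $\kappa$. The remedy I would use is either the $L^p$-boundedness of $M_S$ for $p$ sufficiently close to $1$, or the $L\log L$ endpoint of the Jessen--Marcinkiewicz--Zygmund theorem adapted to the product of Heisenberg groups; either route yields a constant $C_\kappa$ depending on $\kappa$ but keeps $\alpha$ strictly below $\kappa$. Once this sharp per-scale control is in place, the bookkeeping in step~2 goes through as outlined, and the symmetric argument (with $I$, $J$, $I^*$, $I^{(n)}$ playing the roles of $J$, $I$, $J^*$, $J^{(n)}$) yields the second inequality in \eqref{eq:Journe1}.
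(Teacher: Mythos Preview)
The paper does not give a proof of this lemma; it is quoted from Han--Li--Lu \cite{HLL}. So there is no in-paper argument to compare your sketch against.

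Your outline follows the standard Journ\'e--Pipher route and matches in spirit what \cite{HLL} does: decompose $m_1(\Omega)$ into the layers $\mathcal A_n$ according to $\ell(J)/\ell(J^*)\sim 2^{-n}$, bound each layer, and sum the geometric series. The structural ingredients you name---disjointness of the $I$'s for fixed $J$ coming from $\mathbf g_1$-maximality, the half-density constraint on $I^*\times\widehat{J^*}$ coming from the maximality of $J^*$ in $\widetilde{\widetilde\Omega}$, and the bounds $|\widetilde\Omega|,|\widetilde{\widetilde\Omega}|\lesssim|\Omega|$ from the strong maximal theorem---are exactly the right ones.

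The weakness is your last paragraph. In the standard argument the per-scale estimate is $\sum_{R\in\mathcal A_n}|R|\leq C|\Omega|$ with $C$ \emph{independent of $n$}; hence $\sum_{n\geq 0} 2^{-n\kappa}\,C|\Omega|$ converges for every $\kappa>0$ with no recourse to $M_S$ on $L^p$ for $p$ near $1$ or to an $L\log L$ endpoint. There is no iteration of the maximal inequality over scales: only the two fixed enlargements $\widetilde\Omega$ and $\widetilde{\widetilde\Omega}$ ever appear, and $n$ enters purely through the combinatorics of which dyadic ancestor $J^*$ turns out to be. The step you left vague (``iterating the same averaging\ldots grouping rectangles of $\mathcal A_n$ by their common $J^{(n)}$ and summing'') is precisely the substantive packing argument, and when carried out correctly it yields the uniform-in-$n$ bound directly; consult \cite{P} or \cite{HLL} for the clean execution rather than introducing an unnecessary obstacle.
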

\subsection{
  The  action of the   Cauchy-Szeg\H o
projection on  $(  2, N)$-atoms}
\begin{thm}\label{prop0}     For any   $(  2, N)$-atom $a$ on the  group $\mathscr H_1\times \mathscr H_2$,
$\mathcal P (a)$ belongs to $H^1(\mathscr U )$ with
\begin{equation*}
   \|\mathcal P (a)\|_{H^1(\mathscr U )}\leq C_{ Q_1,Q_2,N}
\end{equation*}
  for some constant $C_{ Q_1,Q_2,N}$  depending only on $ Q_1,Q_2, N$.
\end{thm}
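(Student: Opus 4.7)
The plan is to bound $\sup_{\mathbf{t}\in\mathbb{R}_+^2}\|\mathcal{P}(a)(\mathbf{t},\cdot)\|_{L^1(\mathscr H_1\times\mathscr H_2)}$ by a constant depending only on $Q_1,Q_2,N$. Fix $\mathbf{t}$, write $T$ for the operator with kernel $K(\cdot,\cdot;\mathbf{t})$ so that $Ta(\mathbf{g})=\mathcal{P}(a)(\mathbf{t},\mathbf{g})$, and split $\mathscr H_1\times\mathscr H_2=\Omega^\sharp\cup(\Omega^\sharp)^c$ with $\Omega^\sharp:=\bigcup_{R\in m(\Omega)}R^*$. The strong maximal function theorem gives $|\Omega^\sharp|\lesssim|\Omega|$, so on the near part, since $\mathcal{P}$ is an orthogonal projection ($\|T\|_{L^2\to L^2}\leq 1$), Cauchy--Schwarz together with $\|a\|_{L^2}\leq|\Omega|^{-1/2}$ produces $\int_{\Omega^\sharp}|Ta|\,d\mathbf{g}\leq|\Omega^\sharp|^{1/2}\|Ta\|_{L^2}\leq C$.

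For the far part $(\Omega^\sharp)^c$, write $Ta=\sum_{R\in m(\Omega)}Ta_R$ with $a_R=\triangle_1^N\triangle_2^N b_R$. For each rectangle $R=I\times J$ with scales $\tau_1:=\ell(I)$, $\tau_2:=\ell(J)$, the plan is to exploit the algebraic identity
\begin{equation*}
T = \mathcal{D}_R + Te^{-\tau_1^2\triangle_1} + Te^{-\tau_2^2\triangle_2} - Te^{-\tau_1^2\triangle_1}e^{-\tau_2^2\triangle_2},
\end{equation*}
where $\mathcal{D}_R:=T-Te^{-\tau_1^2\triangle_1}-Te^{-\tau_2^2\triangle_2}+Te^{-\tau_1^2\triangle_1}e^{-\tau_2^2\triangle_2}$. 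The kernel of $\mathcal{D}_R$ is precisely the object controlled by the third inequality in Proposition~\ref{prop:K-estimate}, yielding decay $\gamma_1^{-2}\gamma_2^{-2}$ over regions where $\|\mathbf{g}_1'{}^{-1}\mathbf{g}_1\|_1>\gamma_1\tau_1$ and $\|\mathbf{g}_2'{}^{-1}\mathbf{g}_2\|_2>\gamma_2\tau_2$. For the remaining heat-smoothed terms, I would exploit $a_R=\triangle_1^N\triangle_2^N b_R$ together with the functional calculus bound $\|\triangle_\alpha^N e^{-\tau_\alpha^2\triangle_\alpha}\|_{L^2\to L^2}\lesssim\tau_\alpha^{-2N}$: one obtains $\|e^{-\tau_\alpha^2\triangle_\alpha}a_R\|_{L^2}\lesssim\tau_\alpha^{-2N}\|\triangle_\beta^N b_R\|_{L^2}$ for $\beta\neq\alpha$, and atom condition~(iii) (with $\sigma_\alpha=0$, $\sigma_\beta=N$) supplies $\ell^2$-summability in $R$.

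The main technical obstacle is the bi-parameter summation over $R\in m(\Omega)$. A na\"{i}ve sum of the $\gamma^{-2}$ contributions diverges, because the enlargements $I^*$, $J^*$ appearing in $R^*$ can be arbitrarily much larger than $I$, $J$ for long, thin rectangles. This is resolved by Journ\'e's covering Lemma~\ref{lem:Journe}: decomposing the distance from $\mathbf{g}\notin R^*$ to $R$ dyadically in each direction, one obtains factors $(\ell(I)/\ell(I^*))^\kappa$ and $(\ell(J)/\ell(J^*))^\kappa$ against which the inequalities \eqref{eq:Journe1} yield bounds by $|\Omega|$. The hypothesis $N>\max\{n_1+1,n_2+1\}/2$ (equivalently $N>Q_\alpha/4$) from Theorem~\ref{thm:atom} ensures that an admissible $\kappa>0$ exists matching the available Calder\'on--Zygmund decay, so the bi-parameter summation converges. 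Combined with the near-part bound, this yields $\|\mathcal{P}(a)(\mathbf{t},\cdot)\|_{L^1}\leq C_{Q_1,Q_2,N}$ uniformly in $\mathbf{t}$, as required.
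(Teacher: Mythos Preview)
Your near-part argument is fine and matches the paper. The far-part strategy, however, has a structural gap.

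The complement $(R^*)^c$ for $R^* = \breve{C}I^* \times \breve{C}J^*$ is the \emph{union} $\bigl((\breve{C}I^*)^c \times \mathscr{H}_2\bigr) \cup \bigl(\mathscr{H}_1 \times (\breve{C}J^*)^c\bigr)$, not the intersection. The third inequality of Proposition~\ref{prop:K-estimate} controls the kernel of your $\mathcal{D}_R$ only on the \emph{both-far} region $\{\|\mathbf{g}_1'{}^{-1}\mathbf{g}_1\|_1 > \gamma_1\tau_1\} \cap \{\|\mathbf{g}_2'{}^{-1}\mathbf{g}_2\|_2 > \gamma_2\tau_2\}$; on a mixed strip such as $(\breve{C}I^*)^c \times \breve{C}J^*$ the second difference has no better decay than $K$ itself, so the $\mathcal{D}_R$ step does not cover all of $(\Omega^\sharp)^c$. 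For the heat-smoothed remainders $Te^{-\tau_\alpha^2\triangle_\alpha}a_R$, the functional-calculus bound you quote yields only an $L^2$ estimate; since $(\Omega^\sharp)^c$ has infinite measure you cannot pass to $L^1$ by Cauchy--Schwarz, and $e^{-\tau_\alpha^2\triangle_\alpha}a_R$ is no longer supported in $\overline{C}R$, so the kernel integrals cannot be localized either. The atom condition with $\sigma_\alpha=0$ gives $\ell^2$-summability of the $L^2$ norms, but that is not what is needed here.

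The paper fills this gap by splitting $(R^*)^c$ geometrically into the mixed strips $(\breve{C}I^*)^c \times \breve{C}J^*$, $\breve{C}I^* \times (\breve{C}J^*)^c$, and the both-far corner. On a mixed strip it uses the \emph{one-parameter} operator-norm estimate (the first two inequalities of Proposition~\ref{prop:K-estimate}) together with Cauchy--Schwarz in the near variable over the bounded set $\breve{C}J^*$. Moreover, instead of the bare spectral bound, it writes $\mathrm{Id}_\alpha = \mathrm{Id}_{\ell_\alpha,1} + \mathrm{Id}_{\ell_\alpha,2}$ with $\mathrm{Id}_{\ell_\alpha,2}(a_R) = \ell_\alpha^{-2}(\mathrm{Id}_\alpha - e^{-\ell_\alpha^2\triangle_\alpha})(\triangle_\alpha^{N-1}\otimes\triangle_\beta^N)b_R$: every term remains a \emph{difference} $\mathrm{Id} - e^{-s^2\triangle_\alpha}$ applied to a function still supported in $\overline{C}R$, so Proposition~\ref{prop:K-estimate} continues to apply. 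Journ\'e's lemma then closes the sums exactly as you describe; but it is this finer decomposition --- not the hypothesis $N>Q_\alpha/4$, which is used only for Theorem~\ref{thm:atom-H} --- that makes them converge.
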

\begin{proof}
Let $a= \sum_{R \in m (\Omega)} a_{R}$.
Note that for fixed $\mathbf{t}\in \mathbb{R}_+^2$,
\begin{equation*}
   \|\mathcal{ P}a (\mathbf{t},\cdot)\|_{L^1(\mathscr H_1\times \mathscr H_2)}= \| \mathcal{P}a (\mathbf{t},\cdot)\|_{L^1(\cup R^*)}+ \| \mathcal{P}a
   (t,\cdot)\|_{L^1((\cup R^*)^c)}.
\end{equation*}
Since $\mathcal{P}$ is bounded from $L^2(\mathscr H_1\times \mathscr H_2)$  to $ H^2(\mathscr U )$,  by H\"older's inequality, we have
\begin{equation}\begin{split}
  \|\mathcal{ P}a (\mathbf{t},\cdot)\|_{L^1(\cup R^*)}&=  \int_{\cup R^*}|\mathcal{ P}a (\mathbf{t},\mathbf{g})|d\mathbf{g}
  \leq C  |\cup R^*|^{\frac 12}\left (\int_{\cup R^*}|\mathcal{ P}a (\mathbf{t},\mathbf{g})|^2d\mathbf{g}\right) ^{\frac 12}
\\ & \leq C |\Omega| ^{\frac 12}\|a \|_{L^2(\mathscr H_1\times \mathscr H_2)}\leq C |\Omega| ^{\frac 12}  |\Omega| ^{-\frac 12}=C  .
\end{split} \end{equation}
Thus, to prove the theorem,  it is sufficient   to verify the   uniform  boundedness of $ \| \mathcal{P}a
   (t,\cdot)\|_{L^1((\cup R^*)^c)}$. Write
\begin{equation}\label{eq:I1-I2}\begin{split}
  \| \mathcal{P}a (\mathbf{t},\cdot)\|_{L^1((\cup R^*)^c)}&=  \int_{ (\cup R^*)^c }| \mathcal{P}a (\mathbf{t},\mathbf{g})|d\mathbf{g}
 \leq \sum_{R\in m(\Omega)}  \int_{ (  R^*)^c }| \mathcal{P}a_R (\mathbf{t},\mathbf{g})|d\mathbf{g}  \\
&\leq \sum_{R\in m(\Omega)}  \int_{ ({\breve{C}}I^*)^c \times \mathscr H_2}| \mathcal{P}a_R (\mathbf{t},\mathbf{g})|d\mathbf{g} +  \sum_{R\in m(\Omega)}  \int_{ \mathscr
H_1\times
({\breve{C}}J^*)^c}|\mathcal{ P}a_R (\mathbf{t},\mathbf{g})|d\mathbf{g}\\
&=: \sum_{R \in m (\Omega)} I^{R}_{1}+\sum_{R \in m (\Omega)} I^{R}_{ 2 }.
\end{split} \end{equation}

For $R= I \times J$, denote  $\ell_1=l(I)$, $\ell_2=l(J)$, and
\begin{equation*}
   a_{R,1}:=\left(\triangle_1^N \otimes \triangle_2^{N -1}\right)b_R,\qquad  a_{R,2}:=\left(\triangle_1^{N-1} \otimes \triangle_2^N \right)b_R.
\end{equation*}
Then, we have
\begin{equation*}
   a_{R }=\left(\triangle_1\otimes {\rm Id}_2\right)   a_{R,2}=\left({\rm Id}_1\otimes\triangle_2\right)a_{R,1},
\end{equation*}where ${\rm Id}_\alpha$ is the identity operator on $L^2(\mathscr H_\alpha)$, $\alpha=1,2$.

Now for the term $ I^{R}_{ 1 }$, we can write
\begin{equation*}\begin{split}
    I^{R}_{1}&= \int_{ ({\breve{C}}I^*)^c \times {\breve{C}}J^*}|\mathcal{ P}a_R (\mathbf{t},\mathbf{g})|d\mathbf{g}+ \int_{ ({\breve{C}}I^*)^c \times ({\breve{C}}J^*)^c}| \mathcal{P}a_R
    (\mathbf{t},\mathbf{g})|d\mathbf{g}
    =:I^{R}_{11}+I^{R}_{12}.
\end{split}\end{equation*}
We decompose the identity operator on $L^2(\mathscr H_1)$ as follows:
\begin{equation}\begin{split}\label{id1}
     {\rm Id}_1&=\left(\frac 2{\ell_1^2}\int_{0}^{\ell_1} sds\right){\rm Id}_1\\
     &= \frac 2{\ell_1^2}\int_{0}^{\ell_1}s ({\rm Id}_1-e^{-s^2\triangle_1 })ds+\frac 2{\ell_1^2}\int_{0}^{\ell_1}s  e^{-s^2\triangle_1 } ds
     =: {\rm Id}_{\ell_1,1}+{\rm Id}_{\ell_1,2}.
\end{split}\end{equation}
Then for $I^{R}_{11}$, we have
\begin{equation*}\begin{split}
  I^{R}_{11} &= \int_{ ({\breve{C}}I^*)^c \times {\breve{C}}J^*}| \mathcal{P}\circ {\rm Id}_1 (a_R)  (\mathbf{t},\mathbf{g})|d\mathbf{g} \\
    &\leq \int_{ ({\breve{C}}I^*)^c \times {\breve{C}}J^*}| \mathcal{P}\circ {\rm Id}_{\ell_1,1} (a_R)  (\mathbf{t},\mathbf{g})|d\mathbf{g}+\int_{ ({\breve{C}}I^*)^c \times
    {\breve{C}}J^*}|\mathcal{ P}\circ {\rm
    Id}_{\ell_1,2} (a_R)  (\mathbf{t},\mathbf{g})|d\mathbf{g}\\
    &=:I^{R}_{111}+I^{R}_{112}.
\end{split}\end{equation*}
For $I^{R}_{111}$, we have
\begin{equation*}\begin{split}
  I^{R}_{111} &\leq \frac 2{\ell_1^2}\int_{0}^{\ell_1} \int_{ ({\breve{C}}I^*)^c \times {\breve{C}}J^*}\left| \mathcal{P} \circ ({\rm Id}_1-e^{-s^2\triangle_1 })(a_R)
  (\mathbf{t},\mathbf{g})\right|d\mathbf{g}sds.
\end{split}\end{equation*}

Now we will use the following notation: for a given function $f$ on $\mathscr H_1\times \mathscr H_2$ and fixed $\mathbf{g}_1\in \mathscr H_1$, let
$f_{\mathbf{g}_1}$ be a function on  $  \mathscr H_2$ given by
$
   f_{\mathbf{g}_1}(\mathbf{g}_2):= f (\mathbf{g}_1,\mathbf{g}_2).
$
Then,
 \begin{equation*}\begin{split}
      \mathcal{ P }\circ ({\rm Id}_1-e^{-s^2\triangle_1 })(a_R)  (\mathbf{t},\mathbf{g}_1,\mathbf{g}_2)
 &= \left[\left(\mathcal{P}  -\mathcal{P}\circ e^{-s^2\triangle_1 }\right)(a_R)\right]_{\mathbf{g}_1}(\mathbf{t} ,\mathbf{g}_2)\\
 &=\int_{{\overline{C}} I}\left [K^{(1)}(\mathbf{g}_1,\mathbf{g}'_1; \mathbf{t})-K_{ s^2,0}^{(1)}(\mathbf{g}_1,\mathbf{g}'_1; \mathbf{t})  \right](a_R) _{\mathbf{g}_1'}  (\mathbf{g}_2)d\mathbf{g}_1'
\end{split}   \end{equation*}
by definition of operators $K^{(1)}(\mathbf{g}_1,\mathbf{g}'_1; \mathbf{t})$ and $K_{s^2,0}^{(1)}(\mathbf{g}_1,\mathbf{g}'_1; \mathbf{t})$ for fixed $\mathbf{t}\in
\mathbb{R}_+^2$. Therefore, by H\"older's and Minkowski's inequalities, we get
  \begin{equation*}\begin{split}
 I^{R}_{111} &
\leq\frac 2{\ell_1^2}\int_{0}^{\ell_1}sds \int_{ ({\breve{C}}I^*)^c \times {\breve{C}}J^*}\left|\left[\left(\mathcal{P}-\mathcal{P}\circ e^{-s^2\triangle_1 }\right)(a_R)\right]_{\mathbf{g}_1}  (\mathbf{t } ,\mathbf{g}_2)\right|    d\mathbf{g}_1  d\mathbf{g}_2\\
  & \leq \frac {2 }  {\ell_1^2}\int_{0}^{\ell_1}sds \int_{ ({\breve{C}}I^*)^c }d\mathbf{g}_1\left(\int_{   {\breve{C}}J^*} \left| \left[\left(\mathcal{P}
  -\mathcal{P}\circ e^{-s^2\triangle_1 }\right)(a_R)\right]_{\mathbf{g}_1}  (\mathbf{t } ,\mathbf{g}_2)\right|^2 d\mathbf{g}_2\right)^{\frac 12}| {\breve{C}}J^*| ^{\frac 12} \\
  &\leq  \frac {C| J | ^{\frac 12}} {\ell_1^2}\int_{0}^{\ell_1} sds  \int_{ ({\breve{C} } I^*)^c }d\mathbf{g}_1 \int_{{\overline{C}} I}\left(\int_{   {\breve{C} } J^*}\left|\left[K^{(1)}(\mathbf{g}_1,\mathbf{g}'_1; \mathbf{t})-K_{s^2,0}^{(1)}(\mathbf{g}_1,\mathbf{g}'_1; \mathbf{t})  \right]
  (a_R) _{\mathbf{g}_1'}  ( \mathbf{g}_2 )  \right|^2d\mathbf{g} _2\right)^{\frac 12} d\mathbf{g}_1' \\
  &\leq  \frac {C| J | ^{\frac 12}} {\ell_1^2}\int_{0}^{\ell_1}sds  \int_{ ({\breve{C}}I^*)^c } d\mathbf{g}_1\int_{{\overline{C}} I} \left \|K^{(1)}(\mathbf{g}_1,\mathbf{g}'_1; \mathbf{t})-K_{s^2,0}^{(1)}(\mathbf{g}_1,\mathbf{g}'_1; \mathbf{t})  \right\|_{L^2 \rightarrow  L^2 }\|(a_R) _{\mathbf{g}_1'} \|_{ L^2(\mathscr H_2)   }d\mathbf{g}_1'.
\end{split}   \end{equation*}

Let $I =  I_\alpha^k$. Then,  $I^*=\lambda I_\alpha^k$ for some $\lambda\geq 1$.
By the definition of dyadic
cubes, we have ${\breve{C} } I^*\supset B(z_\alpha^ k,2\overline{C}^2\lambda 2^{-k} )$ and so $\mathbf{g}_1\in ({\breve{C} } I^*)^c \subset B(z_\alpha^ k,2\overline{C}^2\lambda 2^{-k} )^c$, while
$\mathbf{g}' _1\in \overline{C}I_\alpha^k \subset B(z_\alpha^ k, \overline{C}^2  2^{-k} ) $. Therefore,
\begin{equation*}
   \| {\mathbf g'_1} ^{-1}\mathbf{g}_1\|_1> \overline{C}  l(I^*)= \overline{C} \frac {l(I^*)}{l(I )}\ell_1\geq \overline{C} \frac {l(I^*)}{l(I )}s.
\end{equation*}
Denote $\gamma_1(R):= \overline{C}\frac {l(I^*)}{l(I )}$. Then we find that
\begin{equation*}\begin{split}
I^{R}_{111} & \leq \frac {C | J | ^{\frac 12}}{\ell_1^2}\int_{0}^{\ell_1}sds \int_{{\overline{C}} I} d\mathbf{g}_1' \int_{
\| {\mathbf g'_1} ^{-1}\mathbf{g}_1\|>\gamma_1(R) s } \left \|K^{(1)}(\mathbf{g}_1,\mathbf{g}'_1; \mathbf{t})-K_{s^2,0}^{(1)}(\mathbf{g}_1,\mathbf{g}'_1; \mathbf{t})
\right\|_{L^2 \rightarrow  L^2 }
 \|(a_R) _{\mathbf{g}_1'} \|_{ L^2   } d\mathbf{g}_1\\&
\leq \frac{ C| J | ^{\frac 12}}{\ell_1^2}\int_{0}^{\ell_1} sds\int_{{\overline{C}} I}    \gamma_1(R)^{-2} \|(a_R) _{\mathbf{g}_1'}  \|_{L^2(\mathscr H_2)
}d\mathbf{g}_1'\\&
\leq  C \gamma_1(R)^{-2}  | J | ^{\frac 12}\left(\int_{{\overline{C}} I}     \|(a_R) _{\mathbf{g}_1'}  \|^2_{L^2(\mathscr H_2)  }d\mathbf{g}_1'\right)^{\frac 12} | I | ^{\frac
12}
\\&
\leq  C \gamma_1(R)^{-2}  | R | ^{\frac 12}     \| a_R   \| _{L^2(\mathscr H_1 \times\mathscr H_2)}  ,
\end{split}   \end{equation*}
by using Proposition \ref{prop:K-estimate}. Apply Journ\'e's covering Lemma
 \ref{lem:Journe}   to get
 \begin{equation*}\begin{split}
   \sum_{R \in m (\Omega)} I^{R}_{111}&\leq C \left(\sum_{R \in m_2(\Omega)}\gamma_1(R)^{-4}  | R | \right)^{\frac 12}   \left( \sum_{R \in m (\Omega)}
  \left \| (\triangle_1^N \otimes \triangle_2^N  )b_R  \right \|^2 _{L^2(\mathscr H_1 \times\mathscr H_2)}\right)^{\frac 12}\\
   &\leq C |\Omega|^{\frac 12}|\Omega|^{-\frac 12}=C,
 \end{split}\end{equation*}by the condition of a $(  2, N)$-atom for $\sigma_1=N,\sigma_2=N $ in \eqref{eq:atom}.

To estimate $I^{R}_{112}$, note that
\begin{equation}\label{eq:Id-l2}\begin{split}
    {\rm Id}_{\ell_1,2}(a_R) & =  \left( \frac 2{\ell_1^2}\int_{0}^{\ell_1} s e^{-s^2\triangle_1 } ds\right)\left(\triangle_1\otimes {\rm Id}_2\right)
    a_{R,2}
     =  \left( \frac 2{\ell_1^2}\int_{0}^{\ell_1} s\triangle_1 e^{-s^2\triangle_1 } ds\right)    a_{R,2}\\
      &=\frac 1{\ell_1^2}  \left( {\rm Id}_1- e^{-\ell_1^2\triangle_1 }  \right)    a_{R,2}.
\end{split}\end{equation}
So we have
\begin{equation*}\begin{split}
  I^{R}_{112} &\leq  \frac 1{\ell_1^2} \int_{ ({\breve{C}}I^*)^c \times {\breve{C}}J^*}\left| \mathcal{ P}\circ\left( {\rm Id}_1- e^{-\ell_1^2\triangle_1 }  \right)    ( a_{R,2})
  (\mathbf{t},\mathbf{g})\right|d\mathbf{g}.
\end{split}\end{equation*}
Similarly as we have done for $I^{R}_{111} $, we get
\begin{equation*}\begin{split}
I^{R}_{112}
& \leq \frac C{\ell_1^2}  | J | ^{\frac 12}\int_{ ({\breve{C}}I^*)^c } d\mathbf{g}_1\int_{{\overline{C}} I} \left \|K^{(1)}(\mathbf{g}_1,\mathbf{g}'_1;
\mathbf{t})-K_{\ell_1^2,0}^{(1)}(\mathbf{g}_1,\mathbf{g}'_1; \mathbf{t})  \right\|_{L^2(\mathscr H_2)\rightarrow  L^2(\mathscr H_2)}\|(a_{R,2}) _{\mathbf{g}_1'}\|_{L^2(\mathscr H_2)  }d\mathbf{g}_1'\\
&\leq \frac C{\ell_1^2}      \gamma_1(R)^{-2} | J | ^{\frac 12}  \int_{{\overline{C}} I}  \|(a_{R,2}) _{\mathbf{g}_1'}\|_{L^2(\mathscr H_2)}d\mathbf{g}_1'\\
&\leq  C \frac 1{\ell_1^2}\gamma_1(R)^{-2}  | J | ^{\frac 12} \left(\int_{{\overline{C}} I}     \|(a_{R,2}) _{\mathbf{g}_1'}  \|^2_{L^2(\mathscr H_2)}d\mathbf{g}_1'\right)^{\frac 12} | I | ^{\frac 12}\\
&\leq \frac C {l(I)^2}\gamma_1(R)^{-2}  | R | ^{\frac 12}     \| a_{R,2}   \| _{L^2(\mathscr H_1 \times\mathscr H_2 )} .
\end{split}   \end{equation*}
Apply Journ\'e's covering Lemma
 \ref{lem:Journe}   to get
 \begin{equation*}\begin{split}
   \sum_{R\in m (\Omega)} I^{R}_{112}&\leq C \left(\sum_{R \in m_2(\Omega)}\gamma_1(R)^{-4}  | R | \right)^{\frac 12}   \left( \sum_{R= I \times J \in m (\Omega)}
   l(I)^{-4}\| ( \triangle_1^{N-1} \otimes \triangle_2^N )b_R   \|^2 _{L^2(\mathscr H )}\right)^{\frac 12}\\
   &\leq C |\Omega|^{\frac 12}|\Omega|^{-\frac 12}=C,
\end{split} \end{equation*}by the condition of a $(  2, N)$-atom for $\sigma_1=N-1,\sigma_2=N$ in \eqref{eq:atom}. Consequently,  $\sum_{R \in m (\Omega)} I^{R}_{11 }$ is bounded.

 Now let us estimate $I^{R}_{12}$. We decompose the identity operator on $L^2(\mathscr H_2)$ as follows:
\begin{equation*}\begin{split}
     {\rm Id}_2&=  \frac 2{\ell_2^2}\int_{0}^{\ell_2}s ({\rm Id}_2-e^{-s^2\triangle_2 })ds+\frac 2{\ell_2^2}\int_{0}^{\ell_2}s  e^{-s^2\triangle_2 }
     ds
    =: {\rm Id}_{\ell_2,1}+{\rm Id}_{\ell_2,2}.
\end{split}\end{equation*}
As in \eqref{eq:Id-l2}\begin{equation} \label{eq:id-2-2} \begin{split}
    {\rm Id}_{\ell_2,2}(a_R) & =  \left( \frac 2{\ell_2^2}\int_{0}^{\ell_2}s  e^{-s^2\triangle_2 } ds\right)\left({\rm Id}_1\otimes\triangle_2\right)
    a_{R,1} =  \left( \frac 2{\ell_2^2}\int_{0}^{\ell_2}s \triangle_2 e^{-s^2\triangle_2 } ds\right)    a_{R,1}\\
      & =\frac 1{ \ell_2^2}  \left( {\rm Id}_2- e^{-\ell_2^2\triangle_2 }  \right)    a_{R,1}.
\end{split}\end{equation}
If we write
\begin{equation*}\begin{split}
  \mathcal{ P}  &= \mathcal{ P}\circ {\rm Id}_{1}\circ {\rm Id}_{2}
    = \mathcal{P}\circ({\rm Id}_{\ell_1,1}+{\rm Id}_{\ell_1,2})\circ({\rm Id}_{\ell_2,1}+{\rm Id}_{\ell_2,2})
   \\&= \mathcal{P}\circ {\rm Id}_{\ell_1,1}\circ {\rm Id}_{\ell_2,1}+ \mathcal{P}\circ {\rm Id}_{\ell_1,1} \circ {\rm Id}_{\ell_2,2}+ \mathcal{P}\circ {\rm Id}_{\ell_1,2}\circ {\rm
   Id}_{\ell_2,1}+ \mathcal{P}\circ {\rm Id}_{\ell_1,2} \circ {\rm Id}_{\ell_2,2}\\
   &=:\mathcal{P}_1+\mathcal{P}_2+\mathcal{P}_3+\mathcal{P}_4,
\end{split}\end{equation*}then we have
\begin{equation*}\begin{split}
   I^{R}_{12} &\leq  \sum_{j=1}^4\int_{ ({\breve{C}}I^*)^c \times ({\breve{C}}J^*)^c}|\mathcal{ P}_j(a_R) (\mathbf{t},\mathbf{g})|d\mathbf{g}=:\sum_{j=1}^4 I^{R}_{12j}.
\end{split}\end{equation*}

Note that
\begin{equation*}\begin{split}
   I^{R}_{121}
   &= \frac 4{\ell_1^2 \ell_2^2}\int_{0}^{\ell_1} s_1 ds_1\int_{0}^{\ell_2} s_2ds_2 \int_{ ({\breve{C}}I^*)^c \times ({\breve{C}}J^*)^c}|\mathcal{ P}({\rm Id}_1-e^{-s_1^2\triangle_1}) ({\rm Id}_2-e^{-s_2^2\triangle_2 })(a_R) (\mathbf{t},\mathbf{g})|d\mathbf{g}\\
   &\leq \frac 4{\ell_1^2 \ell_2^2}\int_{0}^{\ell_1} s_1 ds_1\int_{0}^{\ell_2} s_2ds_2\int_{ ({\breve{C}}I^*)^c \times ({\breve{C}}J^*)^c}\left| (\mathcal{P}-\mathcal{P}e^{-s_1^2\triangle_1 }-\mathcal{P}e^{-s_2^2\triangle_2 }+\mathcal{P}e^{-s_1^2\triangle_1 } e^{-s_2^2\triangle_2 }) (a_R)\right|  d\mathbf{g} \\
   &= \frac 4{\ell_1^2\ell_2^2}\int_{0}^{\ell_1} s_1 ds_1  \int_{0}^{\ell_2}s_2ds_2\int_{ ({\breve{C}}I^*)^c \times ({\breve{C}}J^*)^c}\left |\int_{{\overline{C}}R}\left( K(\mathbf{g},\mathbf{g}'; \mathbf{t})-K_{s_1^2,0}(\mathbf{g},\mathbf{g}'; \mathbf{t})-K_{0,s_2^2}(\mathbf{g},\mathbf{g}'; \mathbf{t})\right.\right.\\
   &\hskip90mm\left.\left.+K_{s_1^2,s_2^2}(\mathbf{g},\mathbf{g}'; \mathbf{t}) \right) (a_R)(\mathbf{g}')d\mathbf{g}'\right| d\mathbf{g} \\
   &\leq\frac 4{\ell_1^2\ell_2^2}\int_{0}^{\ell_1} s_1 ds_1\int_{0}^{\ell_2}s_2ds_2\int_{{\overline{C}}R}d\mathbf{g}' \int_{ ({\breve{C}}I^*)^c \times ({\breve{C}}J^*)^c}\left | K(\mathbf{g},\mathbf{g}'; \mathbf{t})-
     K_{s_1^2,0}(\mathbf{g},\mathbf{g}'; \mathbf{t})-K_{0 ,s_2^2}(\mathbf{g},\mathbf{g}'; \mathbf{t})\right. \\
     &\hskip
    99mm\left. +K_{s_1^2,s_2^2}(\mathbf{g},\mathbf{g}'; \mathbf{t})  \right|  |  a_R (\mathbf{g}')|d\mathbf{g}  \\
   &\leq C  \gamma_1(R)^{-2}\gamma_2(R)^{-2}\frac 1{\ell_1^2\ell_2^2} \int_{0}^{\ell_1}s_1ds_1\int_{0}^{\ell_2}s_2ds_2\int_{{\overline{C}}R}  |
    a_R (\mathbf{g}')| d\mathbf{g}'\\
    & \leq C  \gamma_1(R)^{-2}\gamma_2(R)^{-2} | R | ^{\frac 12}     \| a_{R }   \| _{L^2(\mathscr H_1 \times\mathscr H_2 )}
\end{split}\end{equation*}
by using Proposition \ref{prop:K-estimate} again, where $\gamma_2(R):= \overline{C}\frac {l(J^*)}{l(J )}$.
For $ I^{R}_{122}$,
  by \eqref{id1} and \eqref{eq:id-2-2}, we find that
  \begin{equation*}\begin{split}
   I^{R}_{122} &=\int_{ ({\breve{C}}I^*)^c \times ({\breve{C}}J^*)^c}| \mathcal{P}\circ {\rm Id}_{\ell_1,1}\circ {\rm Id}_{\ell_2,2}(a_R) (\mathbf{t},\mathbf{g})|d\mathbf{g} \\
   &\leq \frac 2{\ell_1^2\ell_2^2} \int_{0}^{\ell_1}  s_1 ds_1  \int_{ ({\breve{C}}I^*)^c \times ({\breve{C}}J^*)^c}| \mathcal{P}({\rm Id}_1-e^{-s_1^2\triangle_1 })
   ({\rm Id}_2-e^{-\ell_2^2\triangle_2 })( a_{R,1}) (\mathbf{t},\mathbf{g})|d\mathbf{g} \\ &\leq\frac 2{\ell_1^2 \ell_2^2}\int_{0}^{\ell_1} s_1 ds_1\int_{{\overline{C}}R} d\mathbf{g}' \int_{ ({\breve{C}}I^*)^c \times ({\breve{C}}J^*)^c}\left |  K(\mathbf{g},\mathbf{g}'; \mathbf{t})-
     K_{s_1^2,0}(\mathbf{g},\mathbf{g}'; \mathbf{t})-K_{0, \ell_2 ^2}(\mathbf{g},\mathbf{g}'; \mathbf{t})\right. \\&\hskip 90mm\left. +K_{s_1^2,\ell_2^2}(\mathbf{g},\mathbf{g}'; \mathbf{t} )  \right|  | ( a_{R,1})(\mathbf{g}')|d\mathbf{g} \\
   &\leq C  \gamma_1(R)^{-2}\gamma_2(R)^{-2}\frac 1{\ell_1^2\ell_2^2 } \int_{0}^{\ell_1} s_1 ds_1\int_{{\overline{C}}R}  | ( a_{R,1})(\mathbf{g}')| d\mathbf{g}' \\&
 \leq C  \gamma_1(R)^{-2}\gamma_2(R)^{-2} l(J)^{-2}| R | ^{\frac 12}     \|  a_{R,1}  \| _{L^2(\mathscr H_1 \times\mathscr H_2 )}.
\end{split}\end{equation*}

Similarly, we have
 \begin{equation*}
    I^{R}_{123}
 \leq C  \gamma_1(R)^{-2}\gamma_2(R)^{-2} l(I)^{-2}| R | ^{\frac 12}     \|  a_{R,2}  \| _{L^2(\mathscr H_1 \times\mathscr H_2  )}.
 \end{equation*}
By $\triangle_1$ commuting $\triangle_2$,
 \begin{equation*}\begin{split}
   I^{R}_{124} &=\int_{ ({\breve{C}}I^*)^c \times ({\breve{C}}J^*)^c }  | \mathcal{P}\circ {\rm Id}_{\ell_1,2}\circ {\rm Id}_{\ell_2,2}(a_R) (\mathbf{t},\mathbf{g})|d\mathbf{g} \\
   &\leq \frac 1{\ell_1^2\ell_2^2}   \int_{ ({\breve{C}}I^*)^c \times ({\breve{C}}J^*)^c }
   \left| \mathcal{P}({\rm Id}_1-e^{-\ell_1^2\triangle_1 })  ({\rm Id}_2-e^{-\ell_2^2\triangle_2 })  (\triangle_1^{N-1} \otimes \triangle_2^{N -1})b_{R }\right |
   (\mathbf{t},\mathbf{g})d\mathbf{g}\\
&\leq\frac 1{\ell_1^2 \ell_2^2} \int_{{\overline{C}}R}d\mathbf{g}' \int_{ ({\breve{C}}I^*)^c \times ({\breve{C}}J^*)^c}\left | K(\mathbf{g},\mathbf{g}'; \mathbf{t})-K_{\ell_1^2,0}(\mathbf{g},\mathbf{g}'; \mathbf{t})-K_{0,\ell_2 ^2}(\mathbf{g},\mathbf{g}'; \mathbf{t})  + K_{\ell_1^2,\ell_2^2}(\mathbf{g},\mathbf{g}'; \mathbf{t})  \right|  \\ &\hskip 99mm  \left  |(\triangle_1^{N-1} \otimes \triangle_2^{N -1}) b_{R } (\mathbf{g}')\right|d\mathbf{g}.
  \end{split}\end{equation*}
  Thus
  \begin{equation*}\begin{split} I^{R}_{124} &\leq C  \gamma_1(R)^{-2}\gamma_2(R)^{-2}\frac 1{\ell_1^2\ell_2^2 }  \int_{{\overline{C}}R} \left |(\triangle_1^{N-1} \otimes \triangle_2^{N -1}) b_{R } (\mathbf{g}')\right| d\mathbf{g}'    \\
   & \leq C  \gamma_1(R)^{-2}\gamma_2(R)^{-2} l(I)^{-2}l(J)^{-2}| R | ^{\frac 12}     \| (\triangle_1^{N-1} \otimes \triangle_2^{N -1})b_{R }  \| _{L^2(\mathscr H_1 \times\mathscr H_2  )}.
\end{split}\end{equation*}

By applying  Journ\'e's covering Lemma
 \ref{lem:Journe},   H\"older's inequality and the condition of a $(
  2, N)$-atom in \eqref{eq:atom}, we get
 \begin{equation*}\begin{split}
    \sum_{R= I \times J\in m (\Omega)} I^{R}_{124} \leq &C \left(\sum_{R \in m_2(\Omega)}\gamma_1(R)^{-8}  | R | \right)^{\frac 14}  \left(\sum_{R \in m_1(\Omega)}\gamma_2(R)^{-8}  | R | \right)^{\frac 14}\\& \cdot \left( \sum_{R= I \times J \in m (\Omega)}
   l(I)^{-4}l(J)^{-4}\| (\triangle_1^{N-1} \otimes \triangle_2^{N -1})b_R   \|^2 _{L^2(\mathscr H_1 \times\mathscr H_2  )}\right)^{\frac 12}\\
\leq & C |\Omega|^{\frac 14}|\Omega|^{\frac 14}|\Omega|^{-\frac 12}=C.
 \end{split}\end{equation*}

 Similar bounds for  $\sum_{R \in m (\Omega)} I^{R}_{12j} $, $j=1,2,3$, hold.
Thus
\begin{equation*}\begin{split}
 \sum_{R\in m (\Omega)}  I^{R}_{12} &\leq   \sum_{R \in m (\Omega)} \sum_{j=1}^4 I^{R}_{12j} \leq C,
\end{split}\end{equation*}and so $ \sum_{R \in m (\Omega)}  I^{R}_{1 }$ is uniformly bounded.
The estimate for $\sum_{R \in m (\Omega)} I_2^R$ in \eqref{eq:I1-I2} follows by exchanging variables $ \mathbf{g}_1$ and $ \mathbf{g}_2$. The proposition is proved.
\end{proof}
\subsection{Atomic Hardy space }
We say that $f =\sum_{j=1}^\infty\lambda_j a_j$   is a  {\it $(
  2, N)$-atomic
representation} of $f$ if each $a_ j$ is a $(
  2, N)$-atom, $\sum_{j=1}^\infty |\lambda_j| <+\infty,$  and the sum
converges in $L^2(\mathscr H_1\times\mathscr H_2 )$. Set
\begin{equation*}
   \mathbb{H}^1_{  {at},N} (\mathscr H_1\times\mathscr H_2 )=\{f; f \text{ has a } (
  2, N)\text{-atomic
representation}\}
\end{equation*}
with the norm to be the
 infimum of $ \sum_{j=1}^\infty |\lambda_j|  $ taken over all possible representation of $f$.
Then  {\it atomic bi-parameter Hardy space} $H^1_{ {at},N} (\mathscr H_1\times\mathscr H_2 )$  is  defined as the completion of
$\mathbb{H}^1_{  {at},N} (\mathscr H_1\times\mathscr H_2 )$ under this norm.
By \cite[Theorem 2.9]{CDLWY} \cite[Proposition 3.5, 5.2, 5.3]{CFLY}, bi-parameter Hardy spaces   on
  stratified Lie groups   characterized by atomic
decompositions, or area functions, or maximal functions are all equivalent.
\begin{thm}\label{thm:atom-H} Suppose that $N>\max\{Q_1,Q_2\}/4$. Then,
$H^1_{ \text{at},N} (\mathscr H_1\times\mathscr H_2 ){=} H^1 (\mathscr H_1\times\mathscr H_2 )$  and they have equivalent  norms.
\end{thm}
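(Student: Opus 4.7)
The plan is to establish the two inclusions $\mathbb H^1_{\text{at},N}(\mathscr H_1\times\mathscr H_2)\subseteq H^1(\mathscr H_1\times\mathscr H_2)$ and $H^1(\mathscr H_1\times\mathscr H_2)\subseteq H^1_{\text{at},N}(\mathscr H_1\times\mathscr H_2)$ with two-sided norm estimates, and then close off using density. Because the definition of $H^1(\mathscr H_1\times\mathscr H_2)$ here is in terms of the heat-kernel maximal function $f^*$ (rather than via area functions or discrete Littlewood--Paley square functions), the statement is in the scope of the bi-parameter theory of \cite{CDLWY,CFLY}, so the strategy is to verify that the hypotheses of those equivalence theorems are met in our setting and then quote them; the condition $N>\max\{Q_1,Q_2\}/4$ is exactly the regularity needed to make the atomic and heat-maximal characterizations compatible.

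For the direction $\mathbb H^1_{\text{at},N}\subseteq H^1$, I would show the uniform bound $\|a\|_{H^1(\mathscr H_1\times\mathscr H_2)}\le C$ for each $(2,N)$-atom $a=\sum_{R\in m(\Omega)}a_R$. Writing $a^*(\mathbf g)$ as the heat non-tangential maximal function, split $\mathscr H_1\times\mathscr H_2=\bigl(\bigcup_R R^*\bigr)\cup\bigl(\bigcup_R R^*\bigr)^c$. On $\bigcup_R R^*$, combine Cauchy--Schwarz with $|\bigcup_R R^*|\le C|\Omega|$ from the strong maximal theorem and the $L^2$-boundedness of $a\mapsto a^*$ (which is immediate since the heat semigroup is a bi-parameter self-adjoint contraction). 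On the complement, use the factorization $a_R=(\triangle_1^N\otimes\triangle_2^N)b_R$ to absorb powers of $\triangle_1,\triangle_2$ onto the heat kernel: $(\triangle_\alpha^N h^{(\alpha)}_{t_\alpha})$ enjoys pointwise Gaussian decay with factor $t_\alpha^{-N}$, so away from $\overline CR$ one gains arbitrary negative powers of $\|\mathbf h_\alpha^{-1}\mathbf g_\alpha\|_\alpha/\ell(R_\alpha)$. The argument mirrors the proof of Theorem~\ref{prop0}: decompose the identity by $\mathrm{Id}_\alpha=\mathrm{Id}_{\ell_\alpha,1}+\mathrm{Id}_{\ell_\alpha,2}$, apply the Cauchy--Schwarz bound dimension by dimension, and then sum over $R$ with Journé's covering lemma (Lemma \ref{lem:Journe}), using the $(2,N)$-atom bound \eqref{eq:atom} to control $\sum_R\ell(I)^{4\sigma_1-4N}\ell(J)^{4\sigma_2-4N}\|(\triangle_1^{\sigma_1}\otimes\triangle_2^{\sigma_2})b_R\|_{L^2}^2$.

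For the reverse direction $H^1\subseteq \mathbb H^1_{\text{at},N}$, I would appeal to the heat-kernel discrete Calder\'on reproducing formula on $\mathscr H_1\times\mathscr H_2$: writing $f=c\sum_{\mathbf k\in\mathbb Z^2}(\psi_{\mathbf k}^{(1)}\otimes\psi_{\mathbf k}^{(2)})*f$ with $\psi_{\mathbf k}$ built from $(\triangle_\alpha^N e^{-t_\alpha\triangle_\alpha})$ at dyadic scales, form the product area function $S(f)$. Set $\Omega_j:=\{\mathbf g;S(f)(\mathbf g)>2^j\}$ and $\widetilde\Omega_j:=\{M_S\chi_{\Omega_j}>1/2\}$. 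Grouping the reproducing sum over maximal dyadic rectangles in $\widetilde\Omega_j\setminus\widetilde\Omega_{j+1}$ produces functions $a_j$ of the form $a_j=(\triangle_1^N\otimes\triangle_2^N)b_j$ with the support, regularity, and $L^2$-bounds required of $(2,N)$-atoms up to a constant, and $\sum_j 2^j|\widetilde\Omega_j|\lesssim\|S(f)\|_{L^1}\asymp\|f\|_{H^1}$ by the maximal characterization $\|S(f)\|_{L^1}\approx\|f^*\|_{L^1}$ proved in \cite{CFLY}. The final step is to compose with the equivalence among maximal, area, and atomic characterizations from \cite[Thm.~2.9]{CDLWY} and \cite[Prop.~3.5, 5.2, 5.3]{CFLY}.

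The main obstacle is purely technical rather than conceptual: verifying that the stratified-Lie-group framework of \cite{CDLWY,CFLY} specializes cleanly to $\mathscr H_1\times\mathscr H_2$ with the heat-kernel-based atoms used here, in particular that the parameter range $N>\max\{Q_1,Q_2\}/4$ suffices for the Calder\'on tent estimates underlying the area-function equivalence. All remaining bookkeeping consists of applying Journé's lemma to pass from $\Omega$ to $\widetilde{\widetilde\Omega}$ and summing via \eqref{eq:Journe1}, for which the exponent $\kappa$ can be chosen as large as needed thanks to our regularity assumption on $N$.
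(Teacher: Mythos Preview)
The paper does not prove this theorem at all: it is simply stated as a consequence of the cited references. The sentence immediately preceding the statement says that the equivalence of the atomic, area-function, and maximal-function characterizations of bi-parameter Hardy spaces on stratified Lie groups is established in \cite[Theorem~2.9]{CDLWY} and \cite[Propositions~3.5, 5.2, 5.3]{CFLY}, and Theorem~\ref{thm:atom-H} is then recorded as the specialization of those results to $\mathscr H_1\times\mathscr H_2$. No argument is given in the paper beyond this citation.

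Your proposal is therefore not wrong, but it is doing far more than the paper does. What you have sketched---the uniform $H^1$ bound on $(2,N)$-atoms via the $R^*/\!\left(R^*\right)^c$ splitting and Journ\'e's lemma, and the reverse inclusion via a discrete Calder\'on reproducing formula and level-set stopping on the area function---is essentially an outline of how the results in \cite{CDLWY,CFLY} are proved. It is a reasonable and broadly correct sketch, and you even land on the same citations at the end. But the paper's own ``proof'' consists solely of invoking those references; there is nothing further to compare against. If your goal is to match the paper, the one-line citation suffices; if your goal is to supply an independent argument, your outline is on the right track but would need the technical verifications you flag (compatibility of the $(2,N)$-atom definition here with the atoms in \cite{CDLWY}, and the role of $N>\max\{Q_1,Q_2\}/4$ in the reproducing formula) filled in from those sources.
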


It follows from   definition that  an element $f $ of $ H^1_{  {at},N} (\mathscr H_1\times\mathscr H_2 )$ can be written as
  \begin{equation}\label{eq:atomic-decomp}
     f =\sum_{j=1}^\infty\lambda_j a_j,
  \end{equation}which
converges as distributions,  where each $a_ j$ is a $(
  2, N)$-atom and $\sum_{j=1}^\infty |\lambda_j| <+\infty$.
  The norm   of $f\in H^1_{at, N}(\mathscr H_1\times\mathscr H_2 )$ is the
 infimum of $ \sum_{j=1}^\infty |\lambda_j|  $ taken over all possible decomposition of $f$.

\begin{prop}\label{prop:P} $\mathcal P$ can be extended to a bounded operator from $  H ^1(\mathscr H_1\times\mathscr H_2) $ to $H^1(\mathscr U)$.
  \end{prop}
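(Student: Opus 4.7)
My plan is to deduce this directly from Theorem \ref{thm:atom-H} (the atomic decomposition of $H^1(\mathscr H_1\times\mathscr H_2)$) and Theorem \ref{prop0} (the uniform $H^1(\mathscr U)$-bound on the $\mathcal P$-image of a single boundary atom). I would fix $N>\max\{Q_1,Q_2\}/4$ throughout so that Theorem \ref{thm:atom-H} applies; no further hypotheses are needed beyond what is already in place.

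Given $f\in H^1(\mathscr H_1\times\mathscr H_2)$, Theorem \ref{thm:atom-H} furnishes $(2,N)$-atoms $a_j$ and scalars $\lambda_j$ with $f=\sum_j\lambda_j a_j$ (as a distribution) and $\sum_j|\lambda_j|\le C\,\|f\|_{H^1(\mathscr H_1\times\mathscr H_2)}$. I would then set $\mathcal P(f):=\sum_j\lambda_j\,\mathcal P(a_j)$ and verify convergence in $H^1(\mathscr U)$. By Theorem \ref{prop0}, $\|\mathcal P(a_j)\|_{H^1(\mathscr U)}\le C_{Q_1,Q_2,N}$ for every $j$, so
\[
\sum_j|\lambda_j|\,\|\mathcal P(a_j)\|_{H^1(\mathscr U)}\le C_{Q_1,Q_2,N}\sum_j|\lambda_j|\le C\,\|f\|_{H^1(\mathscr H_1\times\mathscr H_2)}<\infty.
\]
The partial sums $S_M=\sum_{j\le M}\lambda_j\,\mathcal P(a_j)$ therefore form a Cauchy sequence in $H^1(\mathscr U)$; Proposition \ref{lem:bound} upgrades this to local uniform Cauchyness on $\mathscr U$, so the limit is a holomorphic function on $\mathscr U$ that lies in $H^1(\mathscr U)$ and satisfies the desired bound $\|\mathcal P(f)\|_{H^1(\mathscr U)}\le C\,\|f\|_{H^1(\mathscr H_1\times\mathscr H_2)}$.

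The one delicate point, and the step I expect to be the main obstacle, is well-definedness: two different atomic decompositions of the same $f$ must yield the same limit. I would dispose of this by density. Every $(2,N)$-atom lies in $L^2(\mathscr H_1\times\mathscr H_2)$, so each finite partial sum $f_M=\sum_{j\le M}\lambda_j a_j$ belongs to $L^2\cap H^1$, where $\mathcal P$ has its original unambiguous meaning as the $L^2$-orthogonal projection onto $H^2(\mathscr U)$; on this dense subspace the two possible meanings of $\mathcal P$ coincide, and continuity of the atomic extension (established above) then forces a unique extension to all of $H^1(\mathscr H_1\times\mathscr H_2)$. As an alternative, one can observe that the explicit Cauchy--Szeg\"o kernel formula \eqref{eq:flat-Szego1} defines $\mathcal P(f)$ directly as a pairing of the kernel $S((\mathbf t,\mathbf g),\cdot)$ with the distribution $f$, and this pairing produces the same holomorphic function on $\mathscr U$ regardless of how $f$ is decomposed atomically. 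Either way, the non-routine work has already been done in Theorem \ref{prop0}, and what remains is essentially formal.
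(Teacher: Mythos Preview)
Your proposal is correct and follows essentially the same approach as the paper: both argue that the uniform bound $\|\mathcal P(a)\|_{H^1(\mathscr U)}\le C$ from Theorem \ref{prop0}, combined with the atomic characterization of $H^1(\mathscr H_1\times\mathscr H_2)$ from Theorem \ref{thm:atom-H}, yields a bounded extension of $\mathcal P$ by density of finite atomic sums. Your version is simply more explicit about convergence, holomorphy of the limit, and well-definedness, whereas the paper compresses all of this into the single sentence ``finite sums of $(2,N)$-atoms are dense in $H^1(\mathscr H_1\times\mathscr H_2)$.''
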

  \begin{proof}
  For any finite sum $ S= \sum_{k=1}^{ M } \lambda_k  a_k $
 of $(2,N)$-atoms, by using Theorem \ref {prop0}, we have
 \begin{equation}\label{eq:atomic-bd}
   \| \mathcal P(S)\|_{H^1(\mathscr U)} \leq  \sum_{k=1}^{ M }  \|  \mathcal{P}(a_k )\| _{H^1(\mathscr U)}  \left|  \lambda_k  \right| \leq  C  \sum_{k=1}^{ M }     \left|
   \lambda_k  \right|.
 \end{equation}
  Since   finite sums
 of $(2,N)$-atoms are dense in $  H ^1(\mathscr H_1\times\mathscr H_2 ) $, we get the result.
 \end{proof}

\section{   Holomorphic atomic decomposition } \subsection{Maximal function of an $H ^1(\mathscr U) $ function }
The following estimate is a bi-parameter generalization of  \cite[lemma 8.5]{FS}.
 \begin{lem}\label{lem:FS} Suppose that $u$   satisfies   heat
equations $(\partial_{t_\alpha}+ \triangle_\alpha)u=0$, $\alpha=1,2$,  on $ \mathscr U$,  and $u^*\in L^p(\mathscr H_1\times\mathscr H_2)$.
  Then, there exists a constant $C>0$ only depending on $p,Q_1,Q_2$ such that
    \begin{equation*}
       |u(\mathbf{t}, \mathbf{g})|\leq C\|u^*\|_{L^p(\mathscr H_1\times\mathscr H_2)}\, t_1^{-\frac {Q_1}{2p }}t_2^{-\frac {Q_2}{2p }} ,
    \end{equation*}
    for any $(\mathbf{t}, \mathbf{g})\in \mathscr U$.
 \end{lem}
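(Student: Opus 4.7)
The plan is a direct averaging argument against the non-tangential maximal function, in complete parallel with Folland--Stein's Lemma 8.5. The heat-equation hypothesis will enter only through the smoothness/continuity of $u$ that it guarantees, so that the pointwise values and the supremum defining $u^*$ make sense; the argument does not invoke any reproducing formula.

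First I would observe that for a fixed $(\mathbf{t},\mathbf{g})\in\mathscr U$, every $\mathbf{h}=(\mathbf{h}_1,\mathbf{h}_2)\in\mathscr H_1\times\mathscr H_2$ satisfying $\|\mathbf{h}_\alpha^{-1}\mathbf{g}_\alpha\|_\alpha^2<t_\alpha$ for $\alpha=1,2$ automatically has the property that $(\mathbf{t},\mathbf{g})\in\Gamma_{\mathbf{h}}$, by the definition \eqref{eq:region} of the non-tangential region. Hence the definition \eqref{eq:nontangential} of $u^*$ immediately yields
\begin{equation*}
|u(\mathbf{t},\mathbf{g})|\leq u^*(\mathbf{h})
\end{equation*}
for every such $\mathbf{h}$.

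Next, the admissible set of $\mathbf{h}$ is precisely the product ball $B_1(\mathbf{g}_1,\sqrt{t_1})\times B_2(\mathbf{g}_2,\sqrt{t_2})$ in $\mathscr H_1\times\mathscr H_2$. Since the dilations $\delta_r^{(\alpha)}$ scale Haar measure by $r^{Q_\alpha}$ (Section 2), this product ball has measure comparable to $t_1^{Q_1/2}t_2^{Q_2/2}$. Raising the pointwise bound to the $p$-th power and integrating $\mathbf{h}$ over the product ball, I get
\begin{equation*}
|u(\mathbf{t},\mathbf{g})|^{p}\cdot c\,t_1^{Q_1/2}t_2^{Q_2/2}\leq \int_{B_1\times B_2}u^*(\mathbf{h})^p\,d\mathbf{h}\leq \|u^*\|_{L^p(\mathscr H_1\times\mathscr H_2)}^{p},
\end{equation*}
and taking $p$-th roots yields the claimed estimate with a constant depending only on $p$, $Q_1$ and $Q_2$.

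I expect no substantive obstacle: the whole argument rests on the single definitional fact that the apex of the bi-parameter non-tangential cones $\Gamma_{\mathbf{h}}$ sweeps out the product ball $B_1(\mathbf{g}_1,\sqrt{t_1})\times B_2(\mathbf{g}_2,\sqrt{t_2})$ around $\mathbf{g}$ as $\mathbf{h}$ varies, combined with the correct bi-parameter scaling of Haar measure. This pointwise decay at interior points will later be combined with Proposition \ref{prop:heat-kernel-Hp} to establish \eqref{eq:nontangential-max} for holomorphic $H^1$ functions.
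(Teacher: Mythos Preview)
Your proof is correct and follows essentially the same averaging argument as the paper: both observe that $|u(\mathbf{t},\mathbf{g})|\leq u^*(\mathbf{h})$ for every $\mathbf{h}$ in the product ball $B_1(\mathbf{g}_1,\sqrt{t_1})\times B_2(\mathbf{g}_2,\sqrt{t_2})$, then average the $p$-th power over this ball and use that its measure is comparable to $t_1^{Q_1/2}t_2^{Q_2/2}$. Your remark that the heat-equation hypothesis is not actually used in the estimate itself is also accurate.
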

  \begin{proof}
 Since $|u(\mathbf{t}, \mathbf{g})|\leq u^*(\mathbf{h})$ whenever $ \|\mathbf{g}_1 ^{-1} \mathbf{h}_1\|_1^2  <t_1,  \|\mathbf{g}_2 ^{-1}  \mathbf{h}_2\|_2^2  <t_2$, we have
 \begin{align*}
    |u(\mathbf{t}, \mathbf{g})| ^p
    &\leq \frac 1{|B_1(\mathbf{g}_1,\sqrt t_1 )||B_2(\mathbf{g}_2,\sqrt t_2 )|}\int_{B_1(\mathbf{g}_1,\sqrt t_1 )\times B_2(\mathbf{g}_2,\sqrt t_2 )}|u^*(\mathbf{h})|^p  dV(\mathbf{h})\\
    &\leq C\|u^*\|_{L^p(\mathscr H_1\times\mathscr H_2)}^p\,t_1^{-\frac {Q_1}{2 }}t_2^{-\frac {Q_2}{2 }},
 \end{align*}
 for some constant $C$ only depending on $p,Q_1,Q_2$.
 \end{proof}
  \begin{prop}\label{prop:H1-H1} If $f\in H^1(\mathscr U)$, then  $\|f^*\|_{L^1(\mathscr H_1 \times\mathscr H_2)} \lesssim \|f\|_{H^1(\mathscr U )} $.
  \end{prop}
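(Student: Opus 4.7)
The plan is to dominate $f^*$ pointwise by (a power of) the strong maximal function applied to boundary data of $f$, exploiting the bi-parameter subharmonicity inequality already established in Proposition \ref{prop:heat-kernel-Hp} for an exponent $q<1$. The main obstacle will be justifying the pointwise majorisation of the non-tangential heat maximal function by the strong maximal function in the present bi-parameter homogeneous-group setting; everything else is either approximation or Fatou.

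First, I would reduce to the case where $f$ is continuous on $\overline{\mathscr U}$. For $\boldsymbol\varepsilon=(\varepsilon_1,\varepsilon_2)\in\mathbb{R}_+^2$, set $f_{\boldsymbol\varepsilon}(\mathbf t,\mathbf g):=f(\mathbf t+\boldsymbol\varepsilon,\mathbf g)$; this is holomorphic on $\mathscr U$, continuous on $\overline{\mathscr U}$, and satisfies $\|f_{\boldsymbol\varepsilon}\|_{H^1(\mathscr U)}\le\|f\|_{H^1(\mathscr U)}$. Also, for any $(\mathbf t,\mathbf h)\in\Gamma_{\mathbf g}$, one has $(\mathbf t-\boldsymbol\varepsilon,\mathbf h)\in\Gamma_{\mathbf g}$ once $\varepsilon_\alpha<t_\alpha-\|\mathbf h_\alpha^{-1}\mathbf g_\alpha\|_\alpha^2$, so $\liminf_{\boldsymbol\varepsilon\to 0}f_{\boldsymbol\varepsilon}^{*}(\mathbf g)\ge f^{*}(\mathbf g)$, and Fatou will hand us the desired bound as soon as it is proved for $f_{\boldsymbol\varepsilon}$ uniformly in $\boldsymbol\varepsilon$.

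Next, fix $q\in(0,1)$ (any will do, say $q=\tfrac12$). Apply Proposition \ref{prop:heat-kernel-Hp} to $f_{\boldsymbol\varepsilon}$ to get
\begin{equation*}
|f_{\boldsymbol\varepsilon}(\mathbf t,\mathbf h)|^q\le\int_{\mathscr H_1\times\mathscr H_2}h_{\mathbf t}(\mathbf k^{-1}\mathbf h)\,|f(\boldsymbol\varepsilon,\mathbf k)|^q\,d\mathbf k=\bigl(h_{\mathbf t}*|f(\boldsymbol\varepsilon,\cdot)|^q\bigr)(\mathbf h).
\end{equation*}
The heat kernel factors as $h_{\mathbf t}=h_{t_1}^{(1)}\otimes h_{t_2}^{(2)}$, and each $h_{t_\alpha}^{(\alpha)}$ enjoys the standard Gaussian upper bound $h_{t_\alpha}^{(\alpha)}(\mathbf k_\alpha)\le Ct_\alpha^{-Q_\alpha/2}\exp(-c\|\mathbf k_\alpha\|_\alpha^2/t_\alpha)$ on $\mathscr H_\alpha$. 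The hard step is then the following one-parameter fact, proved by splitting the convolution into dyadic annuli about $\mathbf g_\alpha$ and using the Gaussian bound: for any nonnegative $u$ on $\mathscr H_\alpha$,
\begin{equation*}
\sup_{\|\mathbf h_\alpha^{-1}\mathbf g_\alpha\|_\alpha^2<t_\alpha}\bigl(h_{t_\alpha}^{(\alpha)}*u\bigr)(\mathbf h_\alpha)\le C\,\mathcal M_\alpha(u)(\mathbf g_\alpha),
\end{equation*}
where $\mathcal M_\alpha$ is the Hardy--Littlewood maximal operator on $\mathscr H_\alpha$. Iterating this in the two factors (first in $\mathbf h_1$ for fixed $\mathbf h_2$, then in $\mathbf h_2$) yields
\begin{equation*}
\sup_{(\mathbf t,\mathbf h)\in\Gamma_{\mathbf g}}\bigl(h_{\mathbf t}*|f(\boldsymbol\varepsilon,\cdot)|^q\bigr)(\mathbf h)\le C\,\mathcal M_S\!\bigl(|f(\boldsymbol\varepsilon,\cdot)|^q\bigr)(\mathbf g),
\end{equation*}
with $\mathcal M_S=\mathcal M_1\mathcal M_2$ the strong maximal function on $\mathscr H_1\times\mathscr H_2$. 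Combined with the previous inequality, this gives
\begin{equation*}
f_{\boldsymbol\varepsilon}^{*}(\mathbf g)\le C\bigl[\mathcal M_S(|f(\boldsymbol\varepsilon,\cdot)|^q)(\mathbf g)\bigr]^{1/q}.
\end{equation*}

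Finally, since $1/q>1$, the strong maximal function $\mathcal M_S$ is bounded on $L^{1/q}(\mathscr H_1\times\mathscr H_2)$; therefore
\begin{equation*}
\|f_{\boldsymbol\varepsilon}^{*}\|_{L^1}\le C\bigl\|\mathcal M_S(|f(\boldsymbol\varepsilon,\cdot)|^q)\bigr\|_{L^{1/q}}^{1/q}\le C\bigl\||f(\boldsymbol\varepsilon,\cdot)|^q\bigr\|_{L^{1/q}}^{1/q}=C\|f(\boldsymbol\varepsilon,\cdot)\|_{L^1}\le C\|f\|_{H^1(\mathscr U)}.
\end{equation*}
Letting $\boldsymbol\varepsilon\to 0$ and invoking Fatou's lemma together with $\liminf f_{\boldsymbol\varepsilon}^{*}\ge f^{*}$ concludes the proof. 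The only genuinely technical ingredient is the pointwise majorisation of the heat-extension non-tangential maximal function by the Hardy--Littlewood maximal function on each Heisenberg group, which is a routine dyadic-annulus computation using the Gaussian bound for $h_t^{(\alpha)}$.
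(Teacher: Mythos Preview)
Your proposal is correct and follows essentially the same approach as the paper: shift by $\boldsymbol\varepsilon$ to get continuity up to the boundary, apply Proposition~\ref{prop:heat-kernel-Hp} with an exponent $q<1$, dominate the non-tangential heat maximal function by the iterated Hardy--Littlewood maximal function $M_2M_1$, use $L^{1/q}$-boundedness, and pass to the limit via Fatou. The only cosmetic difference is that you spell out the Gaussian upper bound and the dyadic-annulus justification for the pointwise majorisation $\sup_{(\mathbf t,\mathbf h)\in\Gamma_{\mathbf g}}(h_{\mathbf t}*u)(\mathbf h)\lesssim M_2M_1(u)(\mathbf g)$, whereas the paper states this step without further comment.
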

  \begin{proof} Note that $f$ is smooth on $\mathscr U$, since $ (\pi^{-1})^*f$ is holomorphic on $  \mathcal{ U}$ and $\pi$ is a diffeomorphism.   For fixed $\boldsymbol {\varepsilon }\in \mathbb{R}^2_+$, $f(\boldsymbol
  {\varepsilon } +\cdot,\cdot)\in H^1(\mathscr U )\cap C(\overline{\mathscr U})$ by definition. Apply     Proposition \ref{prop:heat-kernel-Hp} to $f(\boldsymbol
  {\varepsilon } +\cdot,\cdot) $ to get
 \begin{equation*}
 |f(\mathbf{{t}} +{\boldsymbol {\varepsilon }} ,\mathbf{g})|^q\leq\int_{\mathscr H_1 \times\mathscr H_2 }h_\mathbf{t}(\mathbf{g'}{}^{-1} \mathbf{g})
 |f(\boldsymbol {\varepsilon } ,\mathbf{g'})|^qd\mathbf{g'}
 \end{equation*} if we choose $0<q<1$.
 Then,  $r=\frac{1}{q}>1$,  $|f({\boldsymbol {\varepsilon }},\cdot)|^q\in L^r(\mathscr H_1 \times\mathscr H_2)$ and
 \begin{equation*} \begin{split}
 \sup\limits_{(\mathbf{t}  ,\mathbf{h})\in  \Gamma_{\mathbf{g }}} |f(\mathbf{{t}} +{\boldsymbol {\varepsilon }},\mathbf{h})|^q&\leq  \sup\limits_{(\mathbf{t}
 ,\mathbf{h})\in  \Gamma_{\mathbf{g }}} \int_{\mathscr H_1 \times\mathscr H_2  }h_\mathbf{t}(\mathbf{g'}{}^{-1} \mathbf{h}  )
 |f(\boldsymbol {\varepsilon },\mathbf{g'})|^qd\mathbf{g'} \lesssim
 {  M_2M_1\left({|f(\boldsymbol {\varepsilon },\cdot)|^q}\right)(\mathbf{g})},
 \end{split} \end{equation*}
where $   \Gamma_{\mathbf{g }}$
  is the  non-tangential region \eqref{eq:region} at $\mathbf{g}\in \mathscr H_1 \times\mathscr H_2$ and $M_\alpha$ is the Hardy-Littlewood maximal function on
  $\mathscr H_\alpha $. Therefore,
\begin{equation}   \label{maximal-ineq} \begin{split}
 \left\|\sup\limits_{(\mathbf{t}  ,\mathbf{h})\in  \Gamma_{\mathbf{g }}} |f(\mathbf{t} +\boldsymbol {\varepsilon },\mathbf{h})|^q\right\|_{L^r(\mathscr H_1
 \times\mathscr H_2
 )} &
 \lesssim \left\|M_2M_1\left({|f(\boldsymbol {\varepsilon }  ,\cdot)|^q}\right)(\mathbf{g})\right\|_{L^r(\mathscr H_1 \times\mathscr H_2 )}  \\
& \lesssim \|\left|f( \boldsymbol {\varepsilon },\cdot)\right|^q\|_{L^r(\mathscr H_1 \times\mathscr H_2 )}\leq  \|f\|_{H^1(\mathscr U )},
 \end{split}\end{equation}where implicit constants are independent of $f $ and $\boldsymbol {\varepsilon } $.
Letting $\varepsilon_1, \varepsilon_2 \rightarrow 0$ in \eqref{maximal-ineq}, we obtain
\begin{equation}   \label{maximal-ineq2} \begin{split} \left\|\sup\limits_{(\mathbf{t}  ,\mathbf{h})\in  \Gamma_{\mathbf{g }}} |f(\mathbf{t}
,\mathbf{h})| \right\|_{L^1(\mathscr H_1\times \mathscr H_2 )}&= \left\|\sup\limits_{(\mathbf{t}  ,\mathbf{h})\in  \Gamma_{\mathbf{g }}}
|f(\mathbf{t} ,\mathbf{h})|^q\right\|_{L^r(\mathscr H_1\times \mathscr H_2 )}\\ &
 \leq\lim_{\varepsilon_1,\varepsilon_2\rightarrow 0}  \left\|\sup\limits_{(\mathbf{t}  ,\mathbf{h})\in  \Gamma_{\mathbf{g }}} |f(\mathbf{t}
 +\boldsymbol {\varepsilon },\mathbf{h})|^q \right\|_{L^r(\mathscr H_1 \times\mathscr H_2  )}\\
& \lesssim \|f\|_{H^1(\mathscr U )}.
 \end{split}\end{equation}  by Fatou's theorem.
 \end{proof}

  \subsection{The  existence of   boundary
    distributions  }The following  existence of boundary
    distributions is a bi-parameter  generalization of   \cite[Theorem 8.8]{FS}.
 \begin{thm}\label{thm:boundary-distribution}
       Suppose that   $f\in H^1(\mathscr U)$. Then there exists $f^b\in
    \mathcal{S}'(\mathscr H_1\times\mathscr H_2 )$ such that $f(\varepsilon_1, \varepsilon_2,\cdot)\rightarrow f^b$ in $ \mathcal{S}'(\mathscr H_1\times\mathscr H_2  )$  as
    $\varepsilon_1, \varepsilon_2\rightarrow 0$.
 \end{thm}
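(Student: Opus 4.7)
My plan is to follow the one-parameter proof of Folland-Stein, producing $f^b$ first as a weak-$*$ subsequential limit of the uniformly $L^1$-bounded family $\{f(\boldsymbol\varepsilon,\cdot)\}_{\boldsymbol\varepsilon\in\mathbb{R}_+^2}$, and then identifying it uniquely via the bi-parameter heat reproducing formula of Proposition \ref{prop:heat-kernel-H1}. By the very definition of $H^1(\mathscr U)$, $\|f(\boldsymbol\varepsilon,\cdot)\|_{L^1(\mathscr H_1\times\mathscr H_2)}\leq \|f\|_{H^1(\mathscr U)}$ for every $\boldsymbol\varepsilon\in\mathbb{R}_+^2$, so the corresponding finite Borel measures have uniformly bounded total variation. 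Since $C_0(\mathscr H_1\times\mathscr H_2)$ is separable ($\mathscr H_1\times\mathscr H_2$ being $\sigma$-compact), sequential Banach-Alaoglu applied to its dual $M(\mathscr H_1\times\mathscr H_2)$ yields, for every sequence $\boldsymbol\varepsilon^{(k)}\to 0$, a subsequence (not relabeled) and a finite measure $\mu$ with $f(\boldsymbol\varepsilon^{(k)},\cdot)\rightharpoonup\mu$ weak-$*$; since $\mathcal{S}\subset C_0$, this convergence holds in $\mathcal{S}'$ as well.

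Next I would pin down $\mu$ using the reproducing formula. For fixed $\mathbf{t}\in\mathbb{R}_+^2$, the shifted function $f(\boldsymbol\varepsilon+\cdot,\cdot)$ lies in $H^1(\mathscr U)\cap C(\overline{\mathscr U})$, so Proposition \ref{prop:heat-kernel-H1} gives
\begin{equation*}
f(t_1+\varepsilon^{(k)}_1,t_2+\varepsilon^{(k)}_2,\mathbf{g}) = \int_{\mathscr H_1\times\mathscr H_2} h_{\mathbf{t}}(\mathbf{h}^{-1}\mathbf{g})\,f(\boldsymbol\varepsilon^{(k)},\mathbf{h})\,d\mathbf{h}.
\end{equation*}
Letting $k\to\infty$: the left side tends to $f(\mathbf{t},\mathbf{g})$ by smoothness of $f$ on $\mathscr U$ (it is the pullback of a holomorphic function by the diffeomorphism $\pi$), and the right side tends to $h_{\mathbf{t}}*\mu(\mathbf{g})$ by weak-$*$ convergence, since $h_{\mathbf{t}}(\cdot^{-1}\mathbf{g})\in\mathcal{S}\subset C_0$. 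Hence $f(\mathbf{t},\mathbf{g})=h_{\mathbf{t}}*\mu(\mathbf{g})$ on $\mathscr U$. If $\mu_1,\mu_2$ are two subsequential weak-$*$ limits, then $h_{\mathbf{t}}*(\mu_1-\mu_2)\equiv 0$ for every $\mathbf{t}$; letting $\mathbf{t}\to 0$ and invoking the standard approximate-identity property $h_{\mathbf{t}}*\nu\to\nu$ in $\mathcal{S}'$ for every tempered distribution $\nu$ forces $\mu_1=\mu_2=:f^b$. A standard sub-subsequence argument then promotes this uniqueness to the full convergence $f(\boldsymbol\varepsilon,\cdot)\to f^b$ in $\mathcal{S}'$ as $\boldsymbol\varepsilon\to 0$.

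The main obstacle is the passage to the limit inside the reproducing formula, together with the companion fact $h_{\mathbf{t}}*\nu\to\nu$ in $\mathcal{S}'$ as $\mathbf{t}\to 0$. Both reduce to verifying that the bi-parameter heat kernel $h_{\mathbf{t}}=h^{(1)}_{t_1}\otimes h^{(2)}_{t_2}$ is Schwartz in the group variables and that $h_{\mathbf{t}}*\varphi\to\varphi$ in $\mathcal{S}(\mathscr H_1\times\mathscr H_2)$ for $\varphi\in\mathcal{S}$; these are standard on each factor $\mathscr H_\alpha$ but require a small amount of care to assemble in the bi-parameter product setting, especially in ensuring uniformity of Schwartz seminorm estimates as $(t_1,t_2)\to(0,0)$ jointly rather than iteratively.
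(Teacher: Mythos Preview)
Your argument is correct, but it is genuinely different from the paper's. The paper follows the Folland--Stein template directly: for a test function $\psi$ it sets $F(\mathbf t)=\langle f(\mathbf t,\cdot),\psi\rangle$, uses the heat equations to trade $\partial_{t_\alpha}$ for $-\triangle_\alpha$ under the integral, and combines this with the pointwise decay of Lemma~\ref{lem:FS} to obtain bounds $|\partial_{t_1}^{k_1}\partial_{t_2}^{k_2}F(\mathbf t)|\lesssim \|\psi\|_{k_1,k_2}\, t_1^{-Q_1/2}t_2^{-Q_2/2}$. Iterated back-integration in $t_1$ and then $t_2$ upgrades this to $|\partial_{t_1}\partial_{t_2}F|\lesssim (1+|\log t_1|)(1+|\log t_2|)$, which is integrable near the origin, so $\lim_{\boldsymbol\varepsilon\to 0}F(\boldsymbol\varepsilon)$ exists with an explicit bound by a fixed Schwartz seminorm $\|\psi\|_{N_1,N_2}$. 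The reproducing identity $f=h_{\mathbf t}*f^b$ is then established separately in Corollary~\ref{prop:heat-convolution}.

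Your route trades this hard analysis for soft compactness: you use only the uniform $L^1$ bound on slices, Banach--Alaoglu in $M(\mathscr H_1\times\mathscr H_2)$, and then pin down the limit via Proposition~\ref{prop:heat-kernel-H1} and the approximate-identity property of $h_{\mathbf t}$. This is cleaner and simultaneously yields $f^b$ as a finite measure together with $f=h_{\mathbf t}*f^b$, essentially merging the paper's Theorem~\ref{thm:boundary-distribution} and Corollary~\ref{prop:heat-convolution} into one step; it also avoids any appeal to $f^*\in L^1$ or Lemma~\ref{lem:FS}. What the paper's approach buys in return is a quantitative statement: the limit functional is bounded by a specific seminorm $\|\psi\|_{N_1,N_2}$ with $N_\alpha=Q_\alpha/2+1$, i.e.\ $f^b$ is exhibited as a distribution of explicit finite order, and no sub-subsequence or uniqueness-via-approximate-identity maneuver is needed. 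One small remark: your opening line notwithstanding, your argument is \emph{not} the Folland--Stein proof; Theorem~8.8 there proceeds via the derivative analysis, which is exactly what the paper generalizes.
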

   \begin{proof} Note that
   $f$ satisfies $(\partial_{t_\alpha}+ \triangle_\alpha)f=0$ on $ \mathscr U$ by Proposition \ref{prop:regular-sublap} and $f^*\in L^1(\mathscr H_1\times\mathscr H_2 )$ by Proposition \ref{prop:H1-H1}.
   For $\psi\in \mathcal{S}  (\mathscr H_1\times\mathscr H_2 )  $, let
   \begin{equation*}
      F(\mathbf{t}):=\int_{\mathscr H_1\times\mathscr H_2} f(\mathbf{t},
   \mathbf{g})\psi(\mathbf{g})dV(\mathbf{g})
   \end{equation*}
  for $\mathbf{t}\in \mathbb{R}^2_+$. The
   integral converges obviously by Lemma \ref{lem:FS}. Then
   \begin{equation*}
    \partial_{t_1}^{k_1} \partial_{t_2}^{k_2} F(\mathbf{t})=\int_{\mathscr H_1\times\mathscr H_2} \partial_{t_1}^{k_1} \partial_{t_2}^{k_2} f(\mathbf{t},
    \mathbf{g})\psi(\mathbf{g})d \mathbf{g} =\int_{\mathscr H_1\times\mathscr H_2}   f(\mathbf{t}, \mathbf{g})(-\triangle_1)^{k_1}(-\triangle_2)^{k_2}
    \psi(\mathbf{g})d \mathbf{g} ,
   \end{equation*}by integration by part.
   Thus,
   \begin{equation}\begin{split}\label{eq:derivative-F}
    |\partial_{t_1}^{k_1} \partial_{t_2}^{k_2} F(\mathbf{t})|
    &\leq\|f(\mathbf{t}, \cdot)\|_{L^\infty(\mathscr H_1\times\mathscr H_2) } \int_{\mathscr H_1\times\mathscr H_2
    }|(-\triangle_1)^{k_1}(-\triangle_2)^{k_2}
    \psi(\mathbf{g})|d \mathbf{g}\\
    & \lesssim  \|\psi\|_{k_1,k_2}\|f\|_{H^1(\mathscr U)}t_1^{-\frac {Q_1}{2  }}t_2^{-\frac {Q_2}{2 }}
  \end{split} \end{equation}by Lemma \ref{lem:FS}.
   In particular, $\partial_{t_1}^{k_1} \partial_{t_2}^{k_2} F(\mathbf{t})\rightarrow 0$ as $t_1\rightarrow +\infty $ or $t_2\rightarrow +\infty$. Hence,
   \begin{equation*}\begin{split}
 \partial_{t_1}^{k_1-1} \partial_{t_2}^{k_2} F(\mathbf{t})&=-\int_{t_1}^{+\infty}      \partial_{s_1}^{k_1} \partial_{t_2}^{k_2} F(s_1,t_2)ds_1 .
 \end{split}  \end{equation*}
   Taking $k_1= N_1:=\frac {Q_1}{2  }+1$, $N_1-1, \ldots,2$, we get
   \begin{equation*}
    |\partial_{t_1}^{2} \partial_{t_2}^{k_2} F(\mathbf{t})|  \lesssim \|\psi\|_{N_1,k_2}\|f\|_{H^1(\mathscr U)}t_1^{-1}t_2^{-\frac {Q_2}{2  }},
   \end{equation*}
   and so
    \begin{equation}\label{eq:derivative-F2}
    |\partial_{t_1}  \partial_{t_2}^{k_2} F(\mathbf{t})| \leq |\partial_{t_1}  \partial_{t_2}^{k_2} F(1, {t}_2)|+\int_{t_1}^1|\partial_{t_1}^2
    \partial_{t_2}^{k_2} F(s_1,t_2)|ds_1  \lesssim \|\psi\|_{N_1,k_2}\|f\|_{H^1(\mathscr U)}(1+|\log t_1|)t_2^{-\frac {Q_2}{2  }}
   \end{equation}
  by using \eqref{eq:derivative-F} for $k_1=1,t_1=1$. Apply the same argument to $t_2$ to get
 \begin{equation*}
    |\partial_{t_1}  \partial_{t_2}  F(\mathbf{t})|  \lesssim \|\psi\|_{N_1,N_2}\|f\|_{H^1(\mathscr U)}(1+|\log t_1|)(1+|\log t_2|),
   \end{equation*}with $N_2:=\frac {Q_2}{2  }+1$.
   We also have
   \begin{equation*}
      |\partial_{t_1}    F( {t}_1,1)|  \lesssim \|\psi\|_{N_1,N_2}\|f\|_{H^1(\mathscr U)}(1+|\log t_1|) ,
   \end{equation*} by using \eqref{eq:derivative-F2} for $k_2=0,t_2=1$.
   Therefore,
   \begin{equation*}
 \lim\limits_{\varepsilon _1 \rightarrow 0}   F(\varepsilon _1, 1 ) = F(1,1)  -\lim\limits_{\varepsilon _1\rightarrow 0 }\int_{\varepsilon_1}^1  \partial_{t_1}
 F(t_1,1) dt_1
   \end{equation*}exists and is bounded by $ \|\psi\|_{N_1,N_2}\|f\|_{H^1(\mathscr U)}$. So does $\lim\limits_{ \varepsilon _2\rightarrow 0}   F( 1,\varepsilon_2) $. At last, we see
   that
   \begin{equation*}
 \lim\limits_{\varepsilon_1, \varepsilon _2\rightarrow 0}  F(\varepsilon_1, \varepsilon _2)= - F(1,1)+\lim\limits_{\varepsilon _1 \rightarrow 0}   F(\varepsilon _1, 1 )   +\lim\limits_{ \varepsilon _2\rightarrow 0}   F( 1,\varepsilon_2)
   +\lim\limits_{\varepsilon_1, \varepsilon _2\rightarrow 0}\int_{\varepsilon_1}^1\int_{\varepsilon_2}^1 \partial_{t_1}  \partial_{t_2}   F(t_1,t_2) dt_1 dt_2
   \end{equation*}exists and is bounded by $ \|\psi\|_{N_1,N_2}\|f\|_{H^1(\mathscr U)}$. So the limit defines a distribution $f^b$ on $\mathscr H_1\times\mathscr H_2  $,  and $\lim\limits_{\varepsilon_1,
   \varepsilon_2\rightarrow
   0}f(\boldsymbol {\varepsilon },\cdot)=f^b$ as distributions.
    \end{proof}
\begin{cor}\label{prop:heat-convolution} If $f\in H^1(\mathscr U)$, then  $f^b\in  H ^1(\mathscr H_1\times\mathscr H_2) $ and $f(\mathbf{t} , \cdot)=h_\mathbf{t}
 * f^b$.
  \end{cor}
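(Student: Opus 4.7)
The plan is to derive the reproducing identity $f(\mathbf{t}, \cdot) = h_{\mathbf{t}} * f^b$ by pulling back the reproducing formula of Proposition \ref{prop:heat-kernel-H1} along a vertical translation and then passing to the limit; once this identity is in hand, membership of $f^b$ in $H^1(\mathscr H_1\times\mathscr H_2)$ falls out immediately from the definition of the bi-parameter Hardy norm combined with Proposition \ref{prop:H1-H1}.

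The first step is to bridge the gap between a general $H^1(\mathscr U)$ function, which need not extend continuously up to the Silov boundary, and Proposition \ref{prop:heat-kernel-H1}, which assumes such continuity. I would translate: for each $\boldsymbol{\varepsilon}=(\varepsilon_1,\varepsilon_2)\in\mathbb{R}_+^2$, define
\[
   f_{\boldsymbol{\varepsilon}}(\mathbf{t},\mathbf{g}):=f(\mathbf{t}+\boldsymbol{\varepsilon},\mathbf{g}).
\]
Since $\partial/\partial\overline{w}_\alpha$ and $\overline{Z}_{\alpha j}$ are both invariant under shifts in $\mathbf{t}$, the function $f_{\boldsymbol{\varepsilon}}$ remains holomorphic on $\mathscr U$ in the sense of Proposition \ref{prop:regular-equiv}; it is continuous on $\overline{\mathscr U}$ because $f$ is smooth on the open set $\mathscr U$ and $\boldsymbol{\varepsilon}\in\mathbb{R}_+^2$; and $\|f_{\boldsymbol{\varepsilon}}\|_{H^1(\mathscr U)}\leq\|f\|_{H^1(\mathscr U)}$ by monotonicity of the defining supremum. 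Proposition \ref{prop:heat-kernel-H1} then gives
\[
   f(\mathbf{t}+\boldsymbol{\varepsilon},\mathbf{g}) = \int_{\mathscr H_1\times\mathscr H_2} h_{\mathbf{t}}(\mathbf{h}^{-1}\mathbf{g})\, f(\boldsymbol{\varepsilon},\mathbf{h})\,d\mathbf{h}.
\]

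Next, I would let $\boldsymbol{\varepsilon}\to 0$. The left-hand side tends to $f(\mathbf{t},\mathbf{g})$ pointwise by smoothness of $f$ on $\mathscr U$. For the right-hand side, I would rewrite the integral as the distributional pairing $\langle f(\boldsymbol{\varepsilon},\cdot),\psi_{\mathbf{t},\mathbf{g}}\rangle$ with the test function $\psi_{\mathbf{t},\mathbf{g}}(\mathbf{h}):=h_{\mathbf{t}}(\mathbf{h}^{-1}\mathbf{g})$. Because $h_{\mathbf{t}}$ is Schwartz on the homogeneous group $\mathscr H_1\times\mathscr H_2$ and the map $\mathbf{h}\mapsto\mathbf{h}^{-1}\mathbf{g}$ preserves Schwartz regularity, $\psi_{\mathbf{t},\mathbf{g}}\in\mathcal{S}(\mathscr H_1\times\mathscr H_2)$. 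By Theorem \ref{thm:boundary-distribution}, $f(\boldsymbol{\varepsilon},\cdot)\to f^b$ in $\mathcal{S}'$, hence the pairing converges to $\langle f^b,\psi_{\mathbf{t},\mathbf{g}}\rangle=h_{\mathbf{t}}*f^b(\mathbf{g})$. Combining the two limits yields the reproducing formula $f(\mathbf{t},\mathbf{g})=h_{\mathbf{t}}*f^b(\mathbf{g})$ for every $(\mathbf{t},\mathbf{g})\in\mathscr U$.

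With this identity established, the function $u(\mathbf{t},\mathbf{g})=h_{\mathbf{t}}*f^b(\mathbf{g})$ appearing in the definition of the bi-parameter Hardy norm coincides with $f$, so its non-tangential maximal function $u^*$ equals $f^*$. Proposition \ref{prop:H1-H1} therefore gives
\[
   \|f^b\|_{H^1(\mathscr H_1\times\mathscr H_2)} = \|f^*\|_{L^1(\mathscr H_1\times\mathscr H_2)} \lesssim \|f\|_{H^1(\mathscr U)},
\]
which completes the argument. The only genuinely delicate step is the passage to the limit inside the convolution integral, and it is precisely because the kernel section $\psi_{\mathbf{t},\mathbf{g}}$ lies in $\mathcal{S}(\mathscr H_1\times\mathscr H_2)$ that the $\mathcal{S}'$-convergence supplied by Theorem \ref{thm:boundary-distribution} is exactly the notion needed to justify the exchange.
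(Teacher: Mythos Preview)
Your proof is correct and follows essentially the same route as the paper: translate vertically to obtain $f_{\boldsymbol{\varepsilon}}\in H^1(\mathscr U)\cap C(\overline{\mathscr U})$, apply Proposition~\ref{prop:heat-kernel-H1}, then pass to the limit using that $h_{\mathbf t}(\,\cdot^{-1}\mathbf g)\in\mathcal S(\mathscr H_1\times\mathscr H_2)$ together with the $\mathcal S'$-convergence from Theorem~\ref{thm:boundary-distribution}. Your write-up is in fact slightly more complete than the paper's, since you spell out the final step $f^b\in H^1(\mathscr H_1\times\mathscr H_2)$ via $u^*=f^*$ and Proposition~\ref{prop:H1-H1}, which the paper leaves implicit.
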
  \begin{proof}
 Consider $f_{ k }(\mathbf{t} , \mathbf{g} ):=f ( \mathbf{t }+\boldsymbol {\varepsilon }_k, \mathbf{g }) $,  where    $\boldsymbol {\varepsilon
 }_k:=(\varepsilon_k,\varepsilon_k)$, $  {\varepsilon_k } :=1/k$.
By definition, we have $f_{ k }\in H^1(\mathscr U )$ and is smooth on $\overline{\mathscr U}$.
Let $F_k  (  \mathbf{h }
    ):=f_k  ( \mathbf{0} , \mathbf{h}
    )\in L^1(\mathscr H_1\times \mathscr H_2)$. Then,   we have
 \begin{equation*}
     {f}_{ k }(\mathbf{t} , \mathbf{g})= h_{\mathbf{t } }* F_k  (   \mathbf{g}
    )
 \end{equation*}
 by Proposition \ref{prop:heat-kernel-H1}. Denote $\breve{h}_{  {\mathbf{t} } ;\mathbf{g}}( {\mathbf{h}}):= h_{ {\mathbf{t} }} ( {\mathbf{h}}^{-1}\mathbf{g} )$, which also   belongs to $
\mathcal{S}(\mathscr H_1\times \mathscr H_2) $.  Then
   \begin{equation*} f (\mathbf{t} , \mathbf{g})=
    \lim_{k\rightarrow\infty} {f}_{ k }(\mathbf{t} , \mathbf{g})= \lim_{k\rightarrow\infty}  h_{\mathbf{t } }* F_k  (  \mathbf{g }
    ) =\lim_{k\rightarrow\infty}  \langle   F_k   , \breve{h}_{ \boldsymbol {\varepsilon } ;\mathbf{g}}  \rangle = \langle    f^b  , \breve{h}_{ \boldsymbol {\varepsilon } ;\mathbf{g}}  \rangle = (h_{\mathbf{t } }* f^b )  (  \mathbf{g }    )
 \end{equation*}
 by  continuity of $f $ at $(\mathbf{t} , \mathbf{g})\in \mathscr U$ and the convergence of distributions $ F_k \rightarrow f^b$ by Theorem \ref{thm:boundary-distribution}.
   \end{proof}
\begin{prop} \label{lem-atom} \cite[Theorem 2.7]{FS} Let $u\in \mathcal{S} '(\mathcal{N})$ on a  homogeneous group $\mathcal{N}$.
 If there exists $\phi\in \mathcal{S}(\mathcal{N}) $ with   $\int _{\mathcal{N}}\phi=1$
  such that $\sup_{t}|\phi_t*u|\in L^1(\mathcal{N})$, then $u\in L^1(\mathcal{N})$.
\end{prop}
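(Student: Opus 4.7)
The plan is to exploit the $L^1$ majorant $M := \sup_t |\phi_t * u|$ in two stages: first, extract a weak-$*$ limit of the smoothed family $\{\phi_t*u\}_{t>0}$ in the space of finite Radon measures; second, identify that limit with $u$ itself, and finally upgrade it to an honest $L^1$ function using the pointwise bound.

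First I would observe that since $\phi\in \mathcal{S}(\mathcal{N})$ and $u\in \mathcal{S}'(\mathcal{N})$, each $\phi_t * u$ is a smooth function with $|\phi_t * u(x)|\leq M(x)$ for all $x$, and since $M\in L^1(\mathcal{N})$ by hypothesis, $\phi_t * u\in L^1(\mathcal{N})$ with $\|\phi_t*u\|_{L^1}\leq \|M\|_{L^1}$ uniformly in $t$. Embedding $L^1(\mathcal{N})\hookrightarrow (C_0(\mathcal{N}))^{*}$ as finite signed Radon measures, Banach-Alaoglu lets me extract a sequence $t_j\to 0$ such that $\phi_{t_j}*u\to \nu$ in the weak-$*$ topology, for some finite signed Radon measure $\nu$ on $\mathcal{N}$.

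Next, because $\phi\in \mathcal{S}$ with $\int\phi = 1$, the family $\{\phi_t\}_{t>0}$ is an approximate identity, so $\phi_t * u \to u$ in $\mathcal{S}'(\mathcal{N})$ as $t\to 0$; in particular this holds along the subsequence $t_j$. Since Schwartz functions lie in $C_0(\mathcal{N})$, weak-$*$ convergence in $(C_0)^{*}$ implies convergence against every $\psi\in \mathcal{S}(\mathcal{N})$, i.e.\ convergence in $\mathcal{S}'(\mathcal{N})$. Uniqueness of distributional limits then forces $u=\nu$, so $u$ is a finite signed Radon measure to begin with. To promote $\nu$ to an $L^1$ density, I would use the pointwise majorant: for any $\psi\in C_c(\mathcal{N})$ with $\psi\geq 0$,
\begin{equation*}
\left|\int_{\mathcal{N}}\psi\, d\nu\right|
= \lim_{j\to\infty}\left|\int_{\mathcal{N}}\psi(x)(\phi_{t_j}*u)(x)\,dx\right|
\leq \int_{\mathcal{N}}\psi(x)\,M(x)\,dx.
\end{equation*}
By the Jordan decomposition $\nu=\nu^{+}-\nu^{-}$ and standard approximation, this estimate passes to indicator functions of Borel sets, yielding $|\nu|\leq M\,dx$ as measures. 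In particular $\nu\ll dx$, and its Radon-Nikodym density $f$ satisfies $|f|\leq M$, so $u=f\in L^1(\mathcal{N})$ with $\|u\|_{L^1}\leq \|M\|_{L^1}$.

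The main technical subtlety lies in the last step: pushing the pointwise bound $|\phi_t*u|\leq M$ through the weak-$*$ limit to conclude absolute continuity rather than merely finiteness of $\nu$. Using nonnegative test functions in $C_c(\mathcal{N})$, together with the Jordan decomposition of $\nu$, is the cleanest route. The remainder is bookkeeping: verifying that the $\phi_t$ are an approximate identity on $\mathcal{S}(\mathcal{N})$ (which is routine for Schwartz $\phi$ with mean one on a homogeneous group) and that the extracted weak-$*$ limit is independent of the subsequence (it must equal $u$, hence is unique).
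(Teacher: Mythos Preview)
The paper does not give its own proof of this proposition; it simply quotes it as \cite[Theorem 2.7]{FS} and immediately applies it. So there is no in-paper argument to compare against.

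Your proposed proof is correct and is in fact essentially the classical argument (the one Folland--Stein use): uniform $L^1$ control of $\phi_t*u$ via the maximal function, weak-$*$ compactness in the space of finite Radon measures, identification of the limit with $u$ via the approximate-identity property $\phi_t*u\to u$ in $\mathcal{S}'$, and finally absolute continuity of the limiting measure from the domination $|\langle \nu,\psi\rangle|\le \int \psi\,M$ for $\psi\ge 0$. One small remark: you do not actually need the sharp pointwise bound $|f|\le M$ at the end; once you know $\nu$ is a finite measure and $\nu\ll dx$, Radon--Nikodym already gives $u=f\in L^1$. The pointwise bound does follow (via Lebesgue differentiation on the homogeneous group), but it is a bonus rather than a necessary step.
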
  $ \mathscr H_1\times \mathscr H_2 $ is a homogeneous group with dilation $\hat \delta_r(\mathbf{g}_1,\mathbf{g}_2):=
(\delta_r^{(1)}(\mathbf{g}_1),\delta_r^{(2)}(\mathbf{g}_2))$. We can apply this proposition to  $ \mathscr H_1\times \mathscr H_2 $ and   the heat kernel to obtain

\begin{cor} \label{cor:L1} For a distribution $ u\in H^1 (\mathscr H_1\times \mathscr H_2)$, we have  $ u\in L^1 (\mathscr H_1\times \mathscr H_2)$.
\end{cor}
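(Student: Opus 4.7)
The plan is to apply Proposition \ref{lem-atom} directly to the product group $\mathscr H_1\times\mathscr H_2$, using the bi-parameter heat kernel at time $(1,1)$ as the test function $\phi$.

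First I would set $\phi(\mathbf{g}) := h_{(1,1)}(\mathbf{g}) = h_1^{(1)}(\mathbf{g}_1)\,h_1^{(2)}(\mathbf{g}_2)$, which is Schwartz on $\mathscr H_1\times\mathscr H_2$ and integrates to $1$. Next I would compute the one-parameter dilate of $\phi$ with respect to $\hat\delta_r$: since $\delta_{1/r}^{(\alpha)}$ acts on $\mathscr H_\alpha$ with Jacobian $r^{-Q_\alpha}$, the scaling of the heat kernel on each factor gives
\begin{equation*}
    \phi_r(\mathbf{g}) = r^{-(Q_1+Q_2)}\phi\bigl(\hat\delta_{1/r}\mathbf{g}\bigr) = h_{r^2}^{(1)}(\mathbf{g}_1)\, h_{r^2}^{(2)}(\mathbf{g}_2) = h_{(r^2,r^2)}(\mathbf{g}).
\end{equation*}
Consequently $\phi_r * u(\mathbf{g}) = (h_{(r^2,r^2)} * u)(\mathbf{g})$, which is simply the value of the bi-parameter heat extension of $u$ along the diagonal ray $\mathbf{t}=(r^2,r^2)$.

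The key observation is then a pointwise domination by the non-tangential maximal function: since $\mathbf{h}=\mathbf{g}$ satisfies $\|\mathbf{h}_\alpha^{-1}\mathbf{g}_\alpha\|_\alpha^2=0<r^2$ for every $r>0$, the point $((r^2,r^2),\mathbf{g})$ lies in the non-tangential region $\Gamma_{\mathbf{g}}$ defined in \eqref{eq:region}. Therefore
\begin{equation*}
    \sup_{r>0}\bigl|\phi_r * u(\mathbf{g})\bigr| \;\leq\; u^*(\mathbf{g}),
\end{equation*}
and by hypothesis $u^*\in L^1(\mathscr H_1\times\mathscr H_2)$. Proposition \ref{lem-atom} applied to the homogeneous group $(\mathscr H_1\times\mathscr H_2,\hat\delta_r)$ with this $\phi$ then yields $u\in L^1(\mathscr H_1\times\mathscr H_2)$.

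There is essentially no obstacle here beyond the bookkeeping for the scaling: one must verify that the one-parameter dilation of the product group is compatible with the product heat kernel (it is, because both factors scale with the same exponent when we use $r^2$ as the time variable), and that the diagonal time $(r^2,r^2)$ falls inside the non-tangential cone (it trivially does at the vertex $\mathbf{h}=\mathbf{g}$). Everything else is a direct invocation of the Folland--Stein criterion.
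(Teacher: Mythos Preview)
Your proposal is correct and is exactly the argument the paper has in mind: the paper simply states that $\mathscr H_1\times\mathscr H_2$ is a homogeneous group under the diagonal dilation $\hat\delta_r$ and that Proposition~\ref{lem-atom} applies with the heat kernel, without spelling out the details. Your verification that $\phi_r=h_{(r^2,r^2)}$ under the Folland--Stein scaling and that the radial maximal function is dominated pointwise by the non-tangential maximal function $u^*$ fills in precisely those details.
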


 \subsection{Proof of Theorem \ref{thm:atom} }
By Corollary \ref{prop:H1-H1} and Corollary \ref{cor:L1}, we see that $f^b\in  H ^1(\mathscr H_1\times\mathscr H_2) \cap  L^1(\mathscr H_1\times\mathscr
H_2) $.
 By applying Theorem \ref{thm:atom-H} to   $f^b
  $, we obtain
  an atomic decomposition
$f^b=
    \sum_{k }  \lambda_k  a_k
$ with $\|f^b\| _{H ^1(\mathscr H_1 \times\mathscr H_2 ) }\approx  \sum_{k } |\lambda_k | $.
Since the summation   converges in $ \mathcal{S}'(\mathscr H_1\times \mathscr H_2)$, we get
\begin{equation}\label{eq:f-epsilon-atom}\begin{split}
   f({\boldsymbol {\varepsilon }},\mathbf{ g})&=h_{{\boldsymbol {\varepsilon }}} *f^b( \mathbf{ g})=\langle f^b ,\breve{h}_{ \boldsymbol {\varepsilon }
   ;\mathbf{g}}\rangle=  \sum_{k }   \left \langle \lambda_k a_k ,\breve{h}_{ \boldsymbol {\varepsilon } ;\mathbf{g}}\right\rangle   =   \sum_{k } \lambda_k  {h}_{
   \boldsymbol {\varepsilon }  } * a_k( \mathbf{ g}).
  \end{split}\end{equation}

  Note that   for $ a_k  \in H^1 (\mathscr H_1\times \mathscr H_2)$, we have
$h_{\boldsymbol {\varepsilon }} * a_k \in H^1(\mathscr H_1\times \mathscr H_2 )$ with
 \begin{equation}\label{eq:atoms-bound}
  \|  h_{\boldsymbol {\varepsilon }} * a_k  \|_{ H^1(\mathscr H_1\times \mathscr H_2 )}\leq  \|    a_k  \|_{ H^1(\mathscr H_1\times \mathscr H_2 )} \leq C_3,
 \end{equation}for some absolute constant $C_3>0$. This is simply because
 \begin{equation*}
    \left [h_{\mathbf{t} }*( h_{\boldsymbol {\varepsilon }} * a_k  )\right] ^*(\mathbf{g}) =\sup\limits_{(\mathbf{t}  ,\mathbf{h})\in  \Gamma_{\mathbf{g }}} \big|h_{\mathbf{t}+ \boldsymbol {\varepsilon } }* a_k  \big|(\mathbf{h}
    )\leq  (h_{\mathbf{t} }*a_k )^*(\mathbf{g}),
 \end{equation*}and $H^1$-norms of $(  2,N)$-atoms are uniformly bounded  by Theorem \ref{thm:atom-H}.
  Thus for fixed ${\boldsymbol {\varepsilon }} >0$, $ h_ {\boldsymbol {\varepsilon }} *  a_k \in L^1(\mathscr H_1\times \mathscr H_2)$   and   $\|h_{\boldsymbol
{\varepsilon }} *  a_k \|_{L^\infty(\mathscr H_1\times \mathscr H_2)}$ is bounded by Lemma \ref{lem:FS}. Thus, $h_{\boldsymbol {\varepsilon }} *  a_k \in
L^2(\mathscr H_1\times \mathscr H_2)$ with $L^2$ norm bounded by a constant independent of $k $. Moreover,
 $\sum_k \lambda_k h_{\boldsymbol {\varepsilon }} *  a_k  $ is  convergent in $ L^2(\mathscr H_1\times \mathscr H_2)$ by the convergence of $ \sum_{k } |\lambda_k
 |^2$, which follows from the convergence of $ \sum_{k } |\lambda_k
 | $.
So we can apply the Cauchy-Szeg\H{o} projection  $ \mathcal P$ in both sides of \eqref{eq:f-epsilon-atom} to get
 \begin{equation} \label{S=P2}
     \mathcal P(f({\boldsymbol {\varepsilon }},\cdot) ) (\mathbf{t},\mathbf{g})  =
       \sum_{k } \lambda_k \mathcal{P}\left(h_{\boldsymbol {\varepsilon }} * a_k \right ) (\mathbf{t},\mathbf{g}) .
  \end{equation}

   For fixed $ {\boldsymbol {\varepsilon }}>0$, $f({\boldsymbol {\varepsilon }} , \cdot) \in L^1(\mathscr H_1\times \mathscr H_2)$ and is bounded by Proposition
 \ref{lem:bound} or Lemma \ref{lem:FS}. Thus $f({\boldsymbol {\varepsilon }} , \cdot) \in
 L^2(\mathscr H )$.  Note that $f({\boldsymbol {\varepsilon }} , \cdot)$ is the boundary value of   $f( {\boldsymbol {\varepsilon }} +\cdot, \cdot  )$, which is smooth on
     $\overline{\mathscr  U}$. Thus
  \begin{equation*}
    \mathcal  P(f({\boldsymbol {\varepsilon }} , \cdot) )(\mathbf{t},\mathbf{g})=f(\mathbf{t}+{\boldsymbol {\varepsilon }}, \mathbf{g})
 \end{equation*}
  by the reproducing formula
of the Cauchy-Szeg\H{o} projection, and so
 \begin{equation}\label{eq:f-limit}
   \lim_{{\boldsymbol {\varepsilon }} \rightarrow 0}  \mathcal  P(f({\boldsymbol {\varepsilon }} , \cdot) )(\mathbf{t},\mathbf{g})= f(\mathbf{t}, \mathbf{g})
 \end{equation}for fixed $\mathbf{t},\mathbf{g}$, by the smoothness of $f$.

Let us show the series in the right hand side of \eqref{S=P2} converges uniformly for  $ {\boldsymbol {\varepsilon }} \in (0,1)\times (0,1) $.
We claim  that for fixed $\mathbf{t},\mathbf{g}$ and any given $\eta>0$, there exists positive integer $M $ such that
 \begin{equation} \label{S=P0}
   \left | \mathcal P(f({\boldsymbol {\varepsilon }}, \cdot) ) (\mathbf{t},\mathbf{g}) -\sum_{ k=1}^M  \lambda_k  \mathcal    P\left(h_{\boldsymbol {\varepsilon }}*
   a_k \right ) (\mathbf{t},\mathbf{g})
   \right|< \eta
 \end{equation} holds uniformly for  $ {\boldsymbol {\varepsilon }}\in (0,1)^2$.
    This is because
 \begin{equation*}\begin{split}
   \sum_{ k>M}|\lambda_k | \left |\mathcal   P\left(h_{\boldsymbol {\varepsilon }} * a_k  \right ) (\mathbf{t},\mathbf{g})\right| &
    \leq   C  \sum_{ k>M }  \left|  \lambda_k \right|\|\mathcal P(h_{\boldsymbol {\varepsilon }} * a_k )\|_{H^1(\mathscr U)} t_1^{-\frac {Q_1}{2 }}t_2^{-\frac
    {Q_2}{2 }}
\\
   &\leq C \|\mathcal P\|_{H ^1(\mathscr H_1\times \mathscr H_2 ) \to H^1(\mathscr U)} t_1^{-\frac {Q_1}{2 }}t_2^{-\frac {Q_2}{2 }}
    \sum_{ k>M } \left|  \lambda_k \right| \| h_{\boldsymbol {\varepsilon }} * a_k \| _{H ^1(\mathscr H_1\times \mathscr H_2 ) }
      \\&
       \leq C C_3\|\mathcal P\|_{H ^1(\mathscr H_1\times \mathscr H_2 ) \to H^1(\mathscr U)}  t_1^{-\frac {Q_1}{2 }}t_2^{-\frac {Q_2}{2 }}\sum_{ k>M }
      \left|  \lambda_k \right|
     \leq \eta
 \end{split} \end{equation*} if $M $ is large, by Proposition \ref{prop:P} and \eqref{eq:atoms-bound}.
  Note that for any $F \in L^2(\mathscr H_1\times
 \mathscr H_2) $, $\mathcal P(F) \in H^2(\mathscr U)$ and so
 \begin{equation*}
    |\mathcal  P\left( F\right ) (\mathbf{t},\mathbf{g})|\leq  C t_1^{-\frac {Q_1}{4 }}t_2^{-\frac {Q_2}{4 }} \|\mathcal P(F)\|_{H^2(\mathscr U)}\leq  C
    t_1^{-\frac {Q_1}{4 }}t_2^{-\frac {Q_2}{4 }} \| F
    \|_{L^2(\mathscr H_1\times \mathscr H_2)},
 \end{equation*}by using Proposition \ref{lem:bound}. Consequently,  for any fixed $(\mathbf{t},\mathbf{g})\in \mathscr U$, $F\mapsto \mathcal  P\left( F\right ) (\mathbf{t},\mathbf{g})$ is a continuous linear functional on
 $L^2(\mathscr H_1\times \mathscr H_2)$.
 Letting $\varepsilon_1, \varepsilon _2 \rightarrow 0$ in \eqref{S=P0}, we get
\begin{equation} \label{S=P}
   \left |  f (\mathbf{t},\mathbf{g})  -\sum_{ k=1}^M \lambda_k    \mathcal   P\left(  a_k  \right ) (\mathbf{t},\mathbf{g}) \right| \leq \eta
 \end{equation}
by \eqref{eq:f-limit} and $h_ {\boldsymbol {\varepsilon }} * a_k  \rightarrow  a_k  $ in $L^2 (\mathscr H_1\times \mathscr H_2)$. At last,  we
get   holomorphic atomic decomposition by letting $M\rightarrow+\infty$. \qed

\section{  Parabolic  maximum principle and  parabolic version of subharmonicity}
 \label{subharmonicity}
\subsection{    Parabolic  maximum principle}
Maximum principle for the  heat equation   was already used by Folland-Stein   \cite[Proposition 8.1]{FS} in the theory of Hardy spaces     on
 homogeneous groups. We give its proof here since we need the proof for functions nonsmooth somewhere.

\begin{prop}  \label{prop:max}  {\rm (Maximum principle)} Let $D$ be a bounded domain in $\mathscr H_\alpha$ and $\Omega=(0,T)\times D $ for $T>0$. Suppose
that
 $v\in C^2( \overline{{\Omega}}) $,  $v|_{[0,T)\times \partial D }\leq 0$, $v|_{ \{0\}\times D }\leq 0$ and $\mathcal{L}_\alpha v\geq 0$ in $\Omega$. Then $v\leq
 0$ in $\Omega$.
\end{prop}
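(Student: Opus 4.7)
The plan is to run the classical parabolic perturbation argument adapted to the sub-Laplacian. I would set $v_\varepsilon(t,\mathbf{g}) := v(t,\mathbf{g}) - \varepsilon t$ for small $\varepsilon > 0$ and observe that
\begin{equation*}
\mathcal{L}_\alpha v_\varepsilon = \frac{1}{4n_\alpha}\sum_{j=1}^{2n_\alpha} X_{\alpha j}^2 v - \partial_{t_\alpha}(v - \varepsilon t) = \mathcal{L}_\alpha v + \varepsilon \geq \varepsilon > 0
\end{equation*}
on $\Omega$. If I can show that $v_\varepsilon \leq 0$ on $\overline{\Omega}$, then letting $\varepsilon \to 0^+$ gives $v \leq 0$ in $\Omega$, which is the claim.

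Next I would run a contradiction argument. Suppose $M := \max_{\overline{\Omega}} v_\varepsilon > 0$; by compactness of $\overline{\Omega}$ and continuity of $v_\varepsilon$, $M$ is attained at some point $(t_0, \mathbf{g}_0)$. By hypothesis (and continuity of $v$ on $\overline{\Omega}$ to extend $v|_{[0,T)\times\partial D}\leq 0$ to the closed set $[0,T]\times\partial D$), $v_\varepsilon \leq v \leq 0$ on the parabolic boundary $(\{0\}\times\overline{D})\cup([0,T]\times\partial D)$, so the maximizer must lie in $(0,T]\times D$. At such a point, standard one-variable analysis gives $\partial_{t_\alpha} v_\varepsilon(t_0,\mathbf{g}_0) \geq 0$ (equal to $0$ if $t_0 < T$, nonnegative as a one-sided derivative if $t_0 = T$). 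For each $j$, I would obtain $X_{\alpha j}^2 v_\varepsilon(t_0,\mathbf{g}_0) \leq 0$ by restricting $v_\varepsilon(t_0,\cdot)$ to the integral curve $\gamma_j(s) := \exp(sX_{\alpha j})(\mathbf{g}_0)$ of the left-invariant vector field $X_{\alpha j}$ (which stays in $D$ for $|s|$ small): the single-variable function $s \mapsto v_\varepsilon(t_0,\gamma_j(s))$ has a local max at $s=0$, and by the chain rule its second derivative at $0$ is $X_{\alpha j}^2 v_\varepsilon(t_0,\mathbf{g}_0)$. Summing over $j$ and combining,
\begin{equation*}
\mathcal{L}_\alpha v_\varepsilon(t_0,\mathbf{g}_0) = \frac{1}{4n_\alpha}\sum_{j=1}^{2n_\alpha} X_{\alpha j}^2 v_\varepsilon(t_0,\mathbf{g}_0) - \partial_{t_\alpha} v_\varepsilon(t_0,\mathbf{g}_0) \leq 0,
\end{equation*}
contradicting $\mathcal{L}_\alpha v_\varepsilon > 0$. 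Hence $M \leq 0$, i.e.\ $v \leq \varepsilon t \leq \varepsilon T$, and sending $\varepsilon \to 0$ finishes the argument.

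The main subtlety compared with the Euclidean parabolic maximum principle is the spatial second-derivative test: the $X_{\alpha j}$ are non-commuting left-invariant first-order operators rather than coordinate vector fields, so one cannot simply invoke negative semi-definiteness of a Hessian matrix. The integral-curve reduction above is what makes the estimate $X_{\alpha j}^2 v_\varepsilon(t_0,\mathbf{g}_0)\leq 0$ rigorous, and it is crucial that this argument is robust enough to be adapted later to functions such as $|f|^q$ that may fail to be $C^2$ at zeros of $f$ (anticipating its use in proving Proposition \ref{prop:heat-kernel-Hp}).
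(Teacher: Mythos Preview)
Your proof is correct and follows essentially the same classical perturbation-and-contradiction route as the paper. The only cosmetic differences are that the paper subtracts $\kappa_1 t_\alpha+\kappa_2$ (two parameters, to force strict negativity on the parabolic boundary) and then locates the \emph{first time} the perturbed function reaches zero, whereas you subtract $\varepsilon t$ and locate the \emph{global maximum} over $\overline{\Omega}$; both arrive at the same second-derivative contradiction via the one-parameter subgroup $\gamma_j(s)=\mathbf{g}_0\exp(sX_{\alpha j})$, exactly as in the paper's \eqref{eq:maximum-2}.
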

\begin{proof} It is proved as the classical case \cite[Lemma 2.1]{Lieb}.  If   replace $v$ by $v-\kappa_1t_\alpha-\kappa_2$ for some $ \kappa_1,\kappa_2>0$, we may
assume $v|_{[0,T)\times\partial\Omega }< 0$, $v|_{ \{0\}\times\Omega
}<0$ and $\mathcal{L}v> 0$.
Suppose that $v>0$ somewhere in $ {\Omega}$. Let
\begin{equation*}
   t_\alpha^*:=\inf\{t_\alpha\mid v(t_\alpha,\mathbf{g}_\alpha)>0 \text{  for some  } \mathbf{g}_\alpha\in \overline{\Omega}\}.
\end{equation*}
  By continuity
and negativity of $v$ on the boundary $[0,T)\times\partial\Omega\cup  \{0\}\times\Omega $, we see that $v(t^*_\alpha, \mathbf{g}^*_\alpha)=0$ for some $(t_\alpha^*,\mathbf{ g}_\alpha^*) \in \Omega$. We must have $v(t_\alpha,\mathbf{g}_\alpha )<0$ for $0<t_\alpha<t_\alpha^*$ and
$\mathbf{g}_\alpha\in\Omega$, and so
\begin{equation}\label{eq:maximum-1}
   \partial_{t_\alpha}v(t_\alpha^*, \mathbf{g}_\alpha^*)\geq 0.
\end{equation}

On the other hand, $v(t_\alpha^*,\cdot ) $ attains its maximum at $\mathbf{g}_\alpha^*$, which implies that $X_{\alpha j}v(t_\alpha^*, \mathbf{g}_\alpha^*)=0$ and
\begin{equation}\label{eq:maximum-2}
   X_{\alpha j}^2v(t_\alpha^*,\mathbf{ g}_\alpha^*)=\left.\frac {d^2}{ds^2} v\left(t_\alpha^*,\mathbf{g}_\alpha^* \gamma_s\right)\right|_{s=0}\leq 0,
\end{equation}$j=1,\ldots, 2n_\alpha$, where $\gamma_s=( \dots,0,s,0,\ldots )$ (only the $(j+1)$-th entry is nontrivial) is the Lie subgroup of one parameter associated to the vector field $X_{\alpha j}$. Consequently, we
get $\mathcal{L}_\alpha v(t_\alpha^*, \mathbf{g}_\alpha^*)\leq 0$ by \eqref{eq:maximum-1}-\eqref{eq:maximum-2}, which contradicts  to $\mathcal{L}_\alpha v> 0$
in $\Omega$. Thus $v-\kappa_1t_\alpha-\kappa_2\leq 0$. Now letting $\kappa_1,\kappa_2\rightarrow 0+$, we get the result.
 \end{proof}

\begin{proof}[Proof of Proposition \ref{prop:heat-kernel-H1}] For  $f\in H^p( \mathscr U ) $, consider $f_{ k }(\mathbf{t} , \mathbf{g} ):=f ( \mathbf{t }+\boldsymbol {\varepsilon }_k, \mathbf{g }) $   as in the proof of Corollary \ref{prop:heat-convolution}, where    $\boldsymbol {\varepsilon
 }_k =(\varepsilon_k,\varepsilon_k)$, $  {\varepsilon_k } :=1/k$. It is
smooth on $\overline{\mathscr U}$ and satisfies   heat equations $ \mathcal{L}_\alpha  {{f}}_{ k }=0$, $\alpha=1,2$, by Proposition \ref{prop:regular-sublap}.
$f ( \boldsymbol {\varepsilon }_k,\cdot)\in L^p(\mathscr H_1\times \mathscr H_2)$ by definition. On the other hand,
$f ( \boldsymbol {\varepsilon }_k,\cdot)\in L^\infty(\mathscr H_1\times \mathscr H_2)$ by Proposition \ref{lem:bound}. Thus,
  $f ( \boldsymbol {\varepsilon }_k,\cdot)\in L^1(\mathscr H_1\times \mathscr H_2)$ since $p\geq 1$. Let
 \begin{equation*}
    \widetilde {f}_{ k }(\mathbf{t} , \mathbf{g})=[ h_{\mathbf{t } }* f_k  ( \mathbf{0} , \cdot
    )](\mathbf{g}),
 \end{equation*}
 which is also smooth in $\overline{\mathscr U}$ with  $\widetilde{f}_k  ( \mathbf{0} , \cdot
    )=f_k  ( \mathbf{0} , \cdot
    )$ by $h\in  \mathcal{S} (\mathscr H_1\times\mathscr H_2 )$,   and satisfies
   $ \mathcal{L}_\alpha \widetilde{{f}}_{ k }=0$ on $\mathscr U$.

 To show $  {f}_{ k }=\widetilde {f}_{ k }$  on $ \mathscr U $, we need to apply    maximum principle twice successively to   $\mathbf{g}_1$ and $\mathbf{g}_2$.  At first, we show that\begin{equation}\label{eq:id-2=0}
 {f}_{ k }(  t_1, 0, \mathbf{g} )= \widetilde{f}_{ k }( t_1, 0, \mathbf{g} )
   \end{equation}for any $t_1 >0$, $\mathbf{g} \in \mathscr H_1\times\mathscr H_2$.
Denote
   \begin{equation*}\begin{split}
       f_  { k(0,  \mathbf{g}_2)}(t_1,\mathbf{g}_1):&=f_k ( t_1 ,  0, \mathbf{g}_1,\mathbf{g}_2) ,\\   \widetilde f_ { k(0,  \mathbf{g}_2)}  (t_1,\mathbf{g}_1):&=\widetilde{f}_k ( t_1 , 0,
      \mathbf{g}_1,\mathbf{g}_2),
   \end{split}  \end{equation*}as functions on  $ {\mathscr U}_1$, for fixed $\mathbf{g}_2$.
      To apply  maximum principle to real components, write $ f_ { k(0,  \mathbf{g}_2)}= f_ { k(0,  \mathbf{g}_2)}^{ 1 } +\mathbf{i } f_ { k(0,  \mathbf{g}_2)}^{ 2 }   $ and  $ \widetilde f_{k (0,  \mathbf{g}_2)}= \widetilde f_  { k(0,  \mathbf{g}_2)}^{ 1 } +\mathbf{i
      } \widetilde f_{k (0,  \mathbf{g}_2)}^{ 2 }   $. Then,
 \begin{equation*}\left.\left[ f_{ k (0,  \mathbf{g}_2)}^{ \beta }-  \widetilde f_{ k (0,  \mathbf{g}_2)}^{ \beta  }\right]\right|_{\{0\}
   \times\mathscr H_1 }=0 \quad\operatorname{and} \quad
   \mathcal{L}_1\left[ f_ {k (0,  \mathbf{g}_2)}^{ \beta }-  \widetilde f_ { k(0,  \mathbf{g}_2)}^{ \beta }\right]=0,
 \end{equation*} on $\mathscr U_1$, $\beta=1,2$.
 We claim that for given $T>0$ and $\eta >0$, there exists $r_0>0$ such that
 \begin{equation}\label{eq:claim}
   \left | f_{ k (0,  \mathbf{g}_2)}^{ \beta }-  \widetilde f_{k (0,  \mathbf{g}_2)}^{ \beta }\right|\leq \eta\qquad \text{ on  }\quad [0,T)\times \partial B_1(\mathbf{0}_1,r) ,
 \end{equation}for $r\geq r_0$.
   Then we can apply maximum principle  in Proposition \ref{prop:max} for $\mathcal{L}_1 $ to
$
      f_{k (0,  \mathbf{g}_2)}^{\beta }-  \widetilde f_{ k (0,  \mathbf{g}_2)} ^{ \beta }- \eta
$
    to get $  f_ {k (0,  \mathbf{g}_2)}^{ \beta } - \widetilde f_{ k (0,  \mathbf{g}_2)}^{ \beta }  \leq \eta $ on $[0,T)\times   B_1(\mathbf{0}_1,r)$. Consequently, by  letting $r\rightarrow\infty$, $T\rightarrow\infty$ and $\eta\rightarrow 0$,
    we get
     \begin{equation*}
       f_{ k (0,  \mathbf{g}_2)}^{ \beta } \leq  \widetilde f_ { k(0,  \mathbf{g}_2)}^{ \beta }
   \end{equation*} on $\mathscr U_1$.  The same argument gives us the reverse inequality. Thus $ f_ {k (0,  \mathbf{g}_2)}^{ \beta }=  \widetilde f_{ k (0,  \mathbf{g}_2)}^{ \beta }
  $ on $\mathscr U_1$, i.e. \eqref{eq:id-2=0} holds.

   Now fix $t_1>0, \mathbf{g}_1\in \mathscr H_1$, applying maximum principle   for $\mathcal{L}_2 $ to functions on  $ {\mathscr U}_2$
   \begin{equation*}\begin{split}
        f_{ k (t_1 ,    \mathbf{g}_1)}(t_2,\mathbf{g}_2):&= {f}_k ( t_1 , t_2 ,
   \mathbf{g}_1,\mathbf{g}_2) ,\\  \widetilde f_ {k (t_1 ,    \mathbf{g}_1)}(t_2,\mathbf{g}_2):&= \widetilde{{f}}_k ( t_1 , t_2 , \mathbf{g}_1,\mathbf{g}_2),
   \end{split}  \end{equation*}
     as above, we find that  $f_{ k (t_1 ,    \mathbf{g}_1)}= \widetilde{f}_{ k (t_1 ,    \mathbf{g}_1)}$ on  $ {\mathscr U}_2$. Thus,
    $f_{k}= \widetilde{f}_{k}$ on $ \mathscr U $, i.e.
\begin{equation}\label{eq:k-reproducing}
    {f} \left( \mathbf{t} +\boldsymbol {\varepsilon }_k ,\mathbf{g}\right)=\int_{\mathscr H }h_{\mathbf{t }} (\mathbf{h}{}^{-1}  \mathbf{g})f \left(
    \boldsymbol {\varepsilon }_k,
    \mathbf{ h}\right)d\mathbf{h}.
 \end{equation}

 Since   $ L^p(\mathscr H_1\times \mathscr H_2)$  for $p> 1$ is reflexive, there exists a
subsequence  of $\{f  (
    \boldsymbol {\varepsilon }_k,
    \cdot )\}$, which is weakly convergent to some $\widetilde{f} \in L^p(\mathscr H_1\times \mathscr H_2)$
by Banach-Alaoglu theorem. We must have $\widetilde{f}(\mathbf{h})= f (\mathbf{0}, \mathbf{h})  $ by the continuity of $f$ on $\overline{\mathscr U} $.
If $p= 1$,
 we  apply  Banach-Alaoglu theorem to the dual space of $C( \mathscr H_1\times \mathscr H_2 )$, which contains  $L^1(\mathscr H_1\times \mathscr
 H_2)$,
to obtain  a subsequence of $\{f  (
    \boldsymbol {\varepsilon }_k,
    \cdot )\}$   weakly converging to a bounded  measure on $ \mathscr H_1\times \mathscr H_2  $, which is $ f (\mathbf{0}, \mathbf{h})d \mathbf{h} $ by the continuity of $f$ on $\overline{\mathscr U} $. Taking limit in
\eqref{eq:k-reproducing} as $k\rightarrow +\infty$, we get the result.

To prove the  boundary condition in the claim (\ref{eq:claim}), note that as in the proof of Proposition \ref {lem:FS},   for $\|\mathbf{g}_1\|_1=r\geq r_0/2$, $t_1\in [0,T]$ and $t_2=0$, we have
 \begin{equation*}\begin{split}
    | {f}_{k}(\mathbf{t}, \mathbf{g})| &= | {f} (\mathbf{t}+\boldsymbol {\varepsilon }_k, \mathbf{g})| \leq \left(\frac 1{|B_1(\mathbf{g}_1,\sqrt {  {\varepsilon }_k })||B_2(\mathbf{g}_2,\sqrt {\varepsilon }_k )|}\int_{B_1(\mathbf{g}_1,\sqrt {  {\varepsilon }_k })\times B_2(\mathbf{g}_2,\sqrt {\varepsilon }_k )}|f^*(\mathbf{h})|^p d \mathbf{h}\right)^{\frac 1p} \\ &\leq
    C_{ k}\left(\int_{B_1(\mathbf{0}_1,r_0/4 )^c\times \mathscr H_2} |f^* (\mathbf{h}) |^p d \mathbf{h}\right)^{\frac 1p} \leq \frac\eta 4 ,
\end{split}\end{equation*}
for sufficiently large $r_0>0$, by $f^*\in L^p(\mathscr H_1\times \mathscr
 H_2)$, where $C_{ k}$ is a constant only depending on $\varepsilon_k$, $Q_1$, $Q_2$. Consequently, we have
\begin{equation}\label{eq:estimate1}
  |\widetilde {f}_{ k }(\mathbf{t} , \mathbf{g})|= \left |\int_{\mathscr H_1\times \mathscr H_2 }h_\mathbf{t}(\mathbf{h}{}^{-1}  \mathbf{g})f \left ( \boldsymbol {\varepsilon }_k,
  \mathbf{h}\right)d\mathbf{h}\right|\leq \frac\eta2
\end{equation}for $\|\mathbf{g}_1\|_1\geq r\geq  r_0$ and $t_1,>  {\varepsilon_k }, t_2=0$ with sufficiently large $r_0$, since the heat kernel decays rapidly.
  \end{proof}

\subsection{    Parabolic version of subharmonicity}
We need the following parabolic version of subharmonicity of $|u|^p$ (cf. \cite[Section 3.2.1 in Chapter 7]{St70} for the Euclidean case).
 \begin{prop}  \label{prop:sub}  Suppose $f$ is holomorphic on $\mathscr U_\alpha  $. Then for any  $ p >0$, we have
 \begin{equation}\label{eq:p-subparabolic0}
   \mathcal{ L}_\alpha|f|^p( t_\alpha,\mathbf{g}_\alpha  )\geq 0,
 \end{equation}
 for $( t_\alpha,\mathbf{g}_\alpha  )\in\mathscr U_\alpha $ with $f( t_\alpha,\mathbf{g}_\alpha  )\neq 0$.
\end{prop}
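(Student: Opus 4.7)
The plan is to compute $\mathcal{L}_\alpha|f|^p$ pointwise at a point where $f\neq 0$, using that $f$ is smooth and $|f|^p=(f\bar f)^{p/2}$ is therefore $C^2$ near such a point. The two ingredients are (a) the heat equation $\partial_{t_\alpha}f=-\triangle_\alpha f=\frac{1}{4n_\alpha}\sum_j X_{\alpha j}^2 f$ furnished by Proposition \ref{prop:regular-sublap}, and (b) the tangential Cauchy--Riemann conditions $\overline Z_{\alpha j}f=0$ from Proposition \ref{prop:regular-equiv}, which translate to $X_{\alpha(n_\alpha+j)}f=\mathbf{i}\,X_{\alpha j}f$ for $j=1,\dots,n_\alpha$.

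First, by a direct Leibniz computation applied to $|f|^p=(f\bar f)^{p/2}$, I get
\begin{equation*}
\partial_{t_\alpha}|f|^p=p|f|^{p-2}\operatorname{Re}\bigl((\partial_{t_\alpha}f)\bar f\bigr),
\end{equation*}
and for each real left-invariant vector field $Y=X_{\alpha j}$,
\begin{equation*}
Y^2|f|^p=p(p-2)|f|^{p-4}\bigl[\operatorname{Re}((Yf)\bar f)\bigr]^2+p|f|^{p-2}\operatorname{Re}\bigl((Y^2f)\bar f\bigr)+p|f|^{p-2}|Yf|^2.
\end{equation*}
Summing in $j$, dividing by $4n_\alpha$, and using the heat equation to identify $\frac{1}{4n_\alpha}\sum_j X_{\alpha j}^2 f=\partial_{t_\alpha}f$, the second-order terms cancel exactly with $\partial_{t_\alpha}|f|^p$. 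This reduces the problem to showing
\begin{equation*}
\mathcal{L}_\alpha|f|^p=\frac{p(p-2)}{4n_\alpha}|f|^{p-4}\sum_{j=1}^{2n_\alpha}\bigl[\operatorname{Re}((X_{\alpha j}f)\bar f)\bigr]^2+\frac{p}{4n_\alpha}|f|^{p-2}\sum_{j=1}^{2n_\alpha}|X_{\alpha j}f|^2\ \ge\ 0.
\end{equation*}

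The main obstacle is that for $0<p<2$ the first term is negative, so nonnegativity cannot come from the heat equation alone; it must be rescued by holomorphicity. Here I invoke (b): setting $v_j:=X_{\alpha j}f$ for $j=1,\dots,n_\alpha$, the identity $X_{\alpha(n_\alpha+j)}f=\mathbf{i}v_j$ gives $\operatorname{Re}(\mathbf{i}v_j\bar f)=-\operatorname{Im}(v_j\bar f)$, so
\begin{equation*}
\sum_{k=1}^{2n_\alpha}\bigl[\operatorname{Re}((X_{\alpha k}f)\bar f)\bigr]^2=\sum_{j=1}^{n_\alpha}|v_j\bar f|^2=|f|^2\sum_{j=1}^{n_\alpha}|v_j|^2=\tfrac12|f|^2\sum_{k=1}^{2n_\alpha}|X_{\alpha k}f|^2.
\end{equation*}
This is the key algebraic cancellation. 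Substituting back collapses the bracket to $\tfrac{p(p-2)}{2}+p=\tfrac{p^2}{2}$, yielding
\begin{equation*}
\mathcal{L}_\alpha|f|^p=\frac{p^2}{8n_\alpha}|f|^{p-2}\sum_{j=1}^{2n_\alpha}|X_{\alpha j}f|^2\ \ge\ 0,
\end{equation*}
which is the desired parabolic subharmonicity for every $p>0$. The inequality is strict unless all tangential derivatives of $f$ vanish at the point, and in particular the proof also identifies the precise ``defect'' term, which will be useful when we later need to apply the maximum principle to $|f|^p$.
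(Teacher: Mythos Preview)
Your argument is correct, and it reaches the same intermediate identity as the paper, namely
\[
4n_\alpha\,\mathcal L_\alpha|f|^p
= p(p-2)\,|f|^{p-4}\sum_{j=1}^{2n_\alpha}\bigl[\operatorname{Re}((X_{\alpha j}f)\bar f)\bigr]^2
+ p\,|f|^{p-2}\sum_{j=1}^{2n_\alpha}|X_{\alpha j}f|^2 .
\]
From this point the two proofs diverge. The paper does not use the tangential CR equations directly; instead it translates to the point $(t_\alpha,\mathbf 0_\alpha)$, pushes forward by $\pi_\alpha$ so that $X_{\alpha j}$ becomes $\partial/\partial\widetilde x_{\alpha j}$ there, and then appeals to the classical fact that $|F|^p$ is plurisubharmonic for $F$ holomorphic on $\mathcal U_\alpha$, which makes the same quadratic form nonnegative. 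Your route is more elementary and intrinsic: from $\overline Z_{\alpha j}f=0$ you extract $X_{\alpha(n_\alpha+j)}f=\mathbf i\,X_{\alpha j}f$, which collapses the quadratic form to the explicit identity
\[
\mathcal L_\alpha|f|^p=\frac{p^2}{8n_\alpha}\,|f|^{p-2}\sum_{j=1}^{2n_\alpha}|X_{\alpha j}f|^2\ \ge 0 .
\]
This avoids the translation-to-origin step and the passage through $\pi_\alpha$, and it gives a sharper conclusion (an equality rather than an inequality, with the equality case $X_{\alpha j}f=0$ for all $j$ visible). The paper's approach, on the other hand, makes transparent the link with the classical plurisubharmonicity of $|F|^p$ in several complex variables, which is the conceptual source of the result. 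Either argument is fine here.
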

\begin{proof}Since $\tau_{\mathbf{h}_\alpha}^*  f $ is also a holomorphic  function for fixed $\mathbf{h}_\alpha  \in\mathscr H_\alpha$ and $\mathcal{ L}_\alpha$ is also invariant
under
translations, we only need to show (\ref{eq:p-subparabolic0}) at point $(t_\alpha,\mathbf{0}_\alpha )$.

Note that
    \begin{equation*}
     X_{\alpha j}|f|^p =\frac p2\left (f\cdot \overline{ f}\right)^{\frac p2-1}( X_{\alpha j} f \cdot \overline{f} + f\cdot\overline{ X_{\alpha j} f} ),
 \end{equation*} and $ X_{\alpha j} f \cdot \overline{f} + f \cdot \overline{X_{\alpha j} f}
 =2{\rm Re} \left(X_{\alpha j} f \cdot  \overline{f}\right)$.
  Then, we have
   \begin{equation}\label{eq:p-subharmonic00}\begin{split}
   \sum_{j=1}^{2n_\alpha}  X_{\alpha j} ^2|f|^p=&\frac p2\left(\frac p2-1\right) |f|^{  p -4}4 \sum_{j=1}^{2n_\alpha} \left({\rm Re} (X_{\alpha j} f
   \cdot\overline{f}) \right)^2\\&+\frac p2|f|^{  p-2  }   \sum_{j=1}^{2n_\alpha} \left( X_{\alpha j} ^2 f  \cdot \overline{f} + f   \cdot \overline{X_{\alpha j}
   ^2
   f }\right)+   p |f|^{  p-2  }\sum_{j=1}^{2n_\alpha}|X_{\alpha j} f|^2 .
   \end{split} \end{equation}\eqref{eq:p-subharmonic00}   minus
   \begin{equation}\label{eq:p-subharmonic0}\begin{split}  \partial_{t_\alpha} |f|^p&=\frac p2 |f|^{  p-2  } \left(
\partial_{t_\alpha} f \cdot\overline{f} +  f \cdot\overline{\partial_{t_\alpha} f}\right ),
  \end{split} \end{equation} multiplied by $4n_\alpha$ gives us
  \begin{equation}\label{eq:p-subharmonic}\begin{split}
   4n_\alpha \mathcal{ L}_\alpha |f|^p=&  p \left(  p-2 \right) |f|^{  p-4  }  \sum_{j=1}^{2n_\alpha}  \big({\rm Re} (X_{\alpha j} f\cdot \overline{f})
   \big)^2  + p  |f|^{  p-2  } \sum_{j=1}^{2n_\alpha} |X_{\alpha j} f|^2
   \end{split} \end{equation}by  $4n_\alpha\mathcal{L}_\alpha f=\sum_{j=1}^{2n_\alpha}   X_{\alpha j} ^2 f-4n_\alpha\partial_{t_\alpha} f=0$ by Theorem
   \ref{prop:regular-sublap}.

Recall that for a holomorphic function $u$ on a domain $\Omega\subset \mathbb{C}$, $\ln |u |$ and so $|u|^p$ for $ p >0 $ are subharmonic on   $\Omega $.
To apply this property, we consider
  $F:=(\pi_\alpha^{-1}) ^*f$, which is  holomorphic  on $  {\mathcal U}_\alpha  $ by Proposition \ref{prop:regular-equiv}. Then,   $ |F|^p $ is plurisubharmonic.
 Thus
  \begin{equation}\label{eq:subharmonic-p}
     \sum_{j=1}^{2n_\alpha} \frac {\partial^2|F|^p  }{\partial {\widetilde  x_{\alpha j}}^2}(\widetilde  w_\alpha, \widetilde   {\mathbf{z}}_\alpha)\geq 0,
  \end{equation}when $F(\widetilde  w_\alpha, \widetilde   {\mathbf{z}}_\alpha)\neq 0$.
Similarly as in \eqref{eq:p-subharmonic00},
this subharmonicity   implies
  \begin{equation}\label{eq:p-subharmonic2}\begin{split}
   0\leq  \sum_{j=1}^{2n_\alpha}  \frac {\partial^2|F|^p  }{\partial {\widetilde  x_{\alpha j}}^2}=&  p \left(  p-2 \right) |F|^{  p-4  }
   \sum_{j=1}^{2n_\alpha}  \left({\rm Re}\left (\frac {\partial F   }{\partial {\widetilde  x_{\alpha j}} } \cdot \overline{F}\right) \right)^2  +  p  |F|^{  p-2 }
   \sum_{j=1}^{2n_\alpha} \left| \frac {\partial F   }{\partial {\widetilde  x_{\alpha j}} }\right |^2,
 \end{split} \end{equation}
 by using
 \begin{equation*}
      \frac {\partial^2 F   }{\partial {\widetilde  x_{\alpha j}}^2}+\frac {\partial^2 F  \hskip 8mm }{\partial {\widetilde  x_{\alpha(n_\alpha + j)}}^2}=  0,
 \end{equation*}$j=1,\ldots, n_\alpha$, since  $F$ is holomorphic.
Apply (\ref{eq:p-subharmonic2}) to   (\ref{eq:p-subharmonic})  to get $\mathcal{ L}_\alpha |f|^p({t}_\alpha ,\mathbf{0}_\alpha)\geq 0$   by
\begin{equation*}X_{\alpha j} f({t}_\alpha ,\mathbf{0}_\alpha)=\frac {\partial F   }{\partial {\widetilde  x_{\alpha j}} }({t}_\alpha ,\mathbf{0}_\alpha) ,
\end{equation*}
  since
  $
        \pi_{\alpha*} X_{\alpha  j}|_{( {t}_\alpha ,\mathbf{0}_\alpha)}= \frac {\partial    }{\partial {\widetilde  x_{\alpha j}} }|_{( {t}_\alpha ,\mathbf{0}_\alpha )}
  $
by \eqref{eq:pi-XY}.
The proof of Proposition \ref{prop:sub}   is completed.
  \end{proof}
 \begin{cor}  \label{cor:sub}  Suppose $f$ is holomorphic on $\mathscr U $. Then for any  $  p>0$, we have
 \begin{equation}\label{eq:p-subparabolic}
   \mathcal{ L}_\alpha|f|^p( \mathbf{t},\mathbf{g}  )\geq 0,
 \end{equation}
 for $( \mathbf{t},\mathbf{g } )\in\mathscr U  $ where $f( \mathbf{t},\mathbf{g } )\neq 0$.
\end{cor}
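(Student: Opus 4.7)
The plan is to reduce the bi-parameter statement to the single-parameter Proposition~\ref{prop:sub} by a slicing argument, observing that holomorphicity on $\mathscr U$ restricts to holomorphicity on each factor $\mathscr U_\alpha$ when the other factor's coordinates are frozen.

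Without loss of generality I would treat $\alpha=1$; the case $\alpha=2$ is identical by symmetry. Fix an arbitrary point $(t_2,\mathbf{g}_2)\in\mathscr U_2$ and define the slice
\begin{equation*}
   f^{(t_2,\mathbf{g}_2)}(t_1,\mathbf{g}_1):=f(t_1,t_2,\mathbf{g}_1,\mathbf{g}_2),\qquad (t_1,\mathbf{g}_1)\in\mathscr U_1.
\end{equation*}
The key observation is that the differential operators characterizing holomorphicity with respect to the first factor, namely $\partial/\partial\overline{w}_1$ and $\overline{Z}_{1j}$ for $j=1,\ldots,n_1$, involve only differentiation in the variables $(t_1,s_1,\mathbf{z}_1)$ (with coefficients depending only on $\mathbf{z}_1$), as is clear from their explicit expressions in \eqref{eq:dbar-w} and the formula $\overline{Z}_{1j}=\partial/\partial\overline{z}_{1j}-\mathbf{i}z_{1j}\partial/\partial s_1$. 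Consequently, since $f$ satisfies \eqref{eq:regular} on $\mathscr U$ by Proposition~\ref{prop:regular-equiv}, the sliced function $f^{(t_2,\mathbf{g}_2)}$ satisfies the analogous equations on $\mathscr U_1$ and is therefore holomorphic on $\mathscr U_1$.

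With holomorphicity of the slice in hand, I would invoke Proposition~\ref{prop:sub} directly to conclude that
\begin{equation*}
   \mathcal{L}_1\bigl|f^{(t_2,\mathbf{g}_2)}\bigr|^p(t_1,\mathbf{g}_1)\geq 0
\end{equation*}
at every $(t_1,\mathbf{g}_1)\in\mathscr U_1$ where $f^{(t_2,\mathbf{g}_2)}(t_1,\mathbf{g}_1)\neq 0$. Since the operator $\mathcal{L}_1=\frac{1}{4n_1}\sum_j X_{1j}^2-\partial/\partial t_1$ acts only in the first-factor variables, its action on the slice coincides with its action on $|f|^p$ evaluated at $(\mathbf{t},\mathbf{g})$; that is, $\mathcal{L}_1|f^{(t_2,\mathbf{g}_2)}|^p(t_1,\mathbf{g}_1)=\mathcal{L}_1|f|^p(\mathbf{t},\mathbf{g})$. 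This yields the desired inequality \eqref{eq:p-subparabolic} for $\alpha=1$ at every point where $f(\mathbf{t},\mathbf{g})\neq 0$, and the argument for $\alpha=2$ is identical after swapping the roles of the two factors.

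There is no real obstacle here; the only thing to double-check is that the ``coefficient'' $z_{1j}$ appearing in $\overline{Z}_{1j}$ depends only on first-factor coordinates (which it does), so that freezing $(t_2,\mathbf{g}_2)$ genuinely produces a function holomorphic on $\mathscr U_1$ in the sense of Proposition~\ref{prop:regular-equiv}. Everything else is a direct application of the already-established single-parameter subharmonicity.
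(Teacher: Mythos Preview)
Your slicing argument is correct and is precisely the intended reduction: the paper states the corollary immediately after Proposition~\ref{prop:sub} without proof, treating it as an obvious consequence by freezing the variables of the other factor and applying the single-parameter result. Your write-up makes this explicit and checks the only point that needs checking, namely that the operators $\partial/\partial\overline{w}_1$, $\overline{Z}_{1j}$, and $\mathcal{L}_1$ involve only first-factor variables.
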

 \subsection{Proof of    Proposition  \ref{prop:heat-kernel-Hp}}
  Maximum principle can not be applied to $|f|^q$  directly because it is not  smooth on $|f|=0$. It is not easy to construct an auxiliary   function as in
  \cite[Section 3.2.1 in Chapter 7]{St70} to overcome this difficulty. But the argument of the proof of
  maximum principle can   be easily adapted to  this case as follows. Let $ f_k(\mathbf{{t}}  ,\mathbf{g}): =f ( \mathbf{{t}} +{\boldsymbol {\varepsilon }}_k ,\mathbf{g})$ as before, and let
  \begin{equation}\label{eq:v-t2=0}\begin{split}
    v_k (\mathbf{t} ,\mathbf{g}):&=|f_k(\mathbf{t} ,\mathbf{g})|^q-\kappa t_1-\kappa t_2,
 \\
    \widetilde{v}_k ( \mathbf{t}, \mathbf{g}):&=[ h_\mathbf{t}* v_k (\mathbf{0} , \cdot)]  (\mathbf{g}),
  \end{split}\end{equation} for $\kappa>0$.
 Here $v_k (\mathbf{0} , \cdot)=|f_k(\mathbf{0}, \cdot)|^q\in L^{\tilde q}(\mathscr H_1\times \mathscr H_2 )$ with ${\tilde q}=1/q>1$. Thus, $\widetilde{v}_k ( \mathbf{t}, \mathbf{g})$ is smooth on $\overline{\mathscr U}$,
  \begin{equation*}v_k (\mathbf{0} , \cdot)=\widetilde{v}_k (\mathbf{0} , \cdot)  \qquad \text{  and }\qquad
     \mathcal{L}_\alpha\widetilde{v}_k( \mathbf{t}, \mathbf{g}) =0 \quad \text{  for }\quad  ( \mathbf{t}, \mathbf{g})\in \mathscr U.
  \end{equation*}

  By Fubini's theorem, $v_k (\mathbf{0 },  \mathbf{{g }}_1,\mathbf{g}_2)=|f_k(\mathbf{0},  \mathbf{{g }}_1,\mathbf{g}_2)|^q$ belongs to $L^{\tilde q}(\mathscr H_1 )$ for
  almost all $\mathbf{g}_2\in\mathscr H_2$.
  Now we fix such a $\mathbf{g}_2\in \mathscr H_2$, and denote functions on $\mathscr U_1$:
   \begin{equation*}\begin{split}
       v_{ k (0,  \mathbf{g}_2)}(t_1,\mathbf{g}_1):&=v_k ( t_1 ,  0, \mathbf{g}_1,\mathbf{g}_2) ,\\  \widetilde v_{ k (0,  \mathbf{g}_2)}  (t_1,\mathbf{g}_1):&=\widetilde{v}_k ( t_1 , 0,
      \mathbf{g}_1,\mathbf{g}_2).
   \end{split}  \end{equation*}

  As in (\ref{eq:claim})  in the proof of Proposition \ref{prop:heat-kernel-H1},   for given $T>0$ and $\eta >0$, there
  exists $r_0>0$ such that $ |  v_{ k (0,  \mathbf{g}_2)} - \widetilde v_{ k (0,  \mathbf{g}_2)} |\leq \eta $ on  the boundary  $ [0,T) \times\partial B_1(\mathbf{0}_1,r) $ for $r\geq r_0$. Then,
 \begin{equation}\label{eq:L-+}
    \mathcal{L}_1\left[ v_{ k (0,  \mathbf{g}_2)} - \widetilde v_{k (0,  \mathbf{g}_2)} -2 \eta\right](t_1,\mathbf{g}_1)>0
 \end{equation} when $ {v}_k   (\mathbf{t},\mathbf{g} )\neq 0$,
 by $\mathcal{L}_1 (-\kappa t_1) =\kappa>0$. Moreover,  $ v_ {k ( 0,  \mathbf{g}_2)} - \widetilde v_ { k(0,  \mathbf{g}_2)} - 2\eta$ is negative on the boundary $ [0,T) \times\partial
  B_1(\mathbf{0}_1,r) \cup \{0\}\times B_1(\mathbf{0}_1,r)$.

Suppose that $\left[ v_{ k (0,  \mathbf{g}_2)} - \widetilde v_{ k (0,  \mathbf{g}_2)} -2 \eta\right]({t}_1,  \mathbf{{g }}_1  )\geq 0$ at some point in  $({t}_1,  \mathbf{{g }}_1   ) \in (0,T) \times
B_1(\mathbf{0}_1,r)$. Then, argued  as in the proof of Proposition
\ref{prop:max}, we can find $(t_1^*,\mathbf{g}_1^* ) \in  (0,T) \times B_1(\mathbf{0}_1,r)$
such that
 \begin{equation}\label{eq:0-point}\begin{split} \left[ v_{ k (0,  \mathbf{g}_2)} - \widetilde v_ {k (0,  \mathbf{g}_2)} - 2\eta\right](t_1^*, \mathbf{g}_1^*  )=0, \qquad\left[ v_ { k(0,  \mathbf{g}_2)} - \widetilde v_{ k (0,  \mathbf{g}_2)} -2 \eta\right](t_1, \mathbf{g}_1   )<0,  \end{split}  \end{equation}
  for
$0<t_1<t_1^*$, $\mathbf{g}_1\in B_1(\mathbf{0}_1,r)$.
Note that   we must have $ f_k(t_1^*,0,  \mathbf{g}_1^* , \mathbf{g}_2 )\neq0$. Otherwise, we have $   v_{k (0,  \mathbf{g}_2)}(t_1^*, \mathbf{g}_1^*  )< 0$ by definition \eqref{eq:v-t2=0} and
\begin{equation}\label{eq:0-point-2}
   \left[ v_{k (0,  \mathbf{g}_2)} - \widetilde v_{k (0,  \mathbf{g}_2)} - 2\eta\right](t_1^*, \mathbf{g}_1^*
 )<0.
\end{equation}which is contradict to \eqref{eq:0-point}.
\eqref{eq:0-point-2} holds because $ \widetilde v_{k (0,  \mathbf{g}_2)}(t_1^*, \mathbf{g}_1^*  )\geq 0$
  by the third formula in \eqref{eq:v-t2=0} by $v_k (\mathbf{0} , {\mathbf{g}}  ) =  |f_k  (\mathbf{0} ,  {\mathbf{g}} )|^q\geq0$ and the nonnegativity
of the heat kernel \cite[Proposition 1.68]{FS}. Therefore,  $  v_{ k (0,  \mathbf{g}_2)}  $ and so  $  v_{ k (0,  \mathbf{g}_2)} - \widetilde v_ { k(0,  \mathbf{g}_2)} - 2\eta $ is smooth at $(t_1^*, \mathbf{g}_1^*  )$. As in \eqref{eq:maximum-1}-\eqref{eq:maximum-2},  we get
\begin{equation*}
   \mathcal{L}_1\left[ v_{k (0,  \mathbf{g}_2)} - \widetilde v_{ k (0,  \mathbf{g}_2)} - 2\eta\right](t_1^*, \mathbf{g}_1^* )\leq0,
\end{equation*}
  which
contradicts to (\ref{eq:L-+}). Thus $\left[ v_ {k (0,  \mathbf{g}_2)} - \widetilde v_ {k (0,  \mathbf{g}_2)} -2 \eta\right](t_1,  \mathbf{g}_1  )<0$  for $(  {t}_1,  \mathbf{{g }}_1) \in (0,T) \times
B_1(\mathbf{0}_1,r)$ for any fixed $\eta, T >0$. Letting  $r\rightarrow\infty$, $T\rightarrow\infty$ and $\eta\rightarrow 0$, we get
$
    {v}_k (  {t}_1,0, \mathbf{{g }}  )\leq \widetilde{v}_k (  {t}_1,0, \mathbf{{g }} )
$ for $ (  {t}_1,  \mathbf{{g }}_1) \in\mathscr U_1$ and almost all $\mathbf{g}_2$, and so for all $\mathbf{g}_2\in  \mathscr H_2$ by continuity.

Applying the same argument to
\begin{equation*}\begin{split}
       {v}_{k ({t}_1,  \mathbf{{g }}_1 )} ( t_2, \mathbf{g}_2 ):&={v}_k (  {t}_1, t_2, \mathbf{{g }}_1,\mathbf{g}_2 ),\\
       \widetilde{{v}}_{k ({t}_1,  \mathbf{{g }}_1 )} ( t_2, \mathbf{g}_2 ):&=\widetilde{v}_k (  {t}_1,t_2, \mathbf{{g }}_1,\mathbf{g}_2 ),
   \end{split}  \end{equation*}
  as functions on $\mathscr U_2$ for fixed
$ t_1  ,\mathbf{g}_1$, we get $ {v}_ {k({t}_1,  \mathbf{{g }}_1 )}  \leq
        \widetilde{{v}}_ {k({t}_1,  \mathbf{{g }}_1 )}$. Consequently,
\begin{equation}\label{eq:v<hat-v}
     |f_k( \mathbf{t}, \mathbf{g})|^q \leq \int_{\mathscr H_1\times \mathscr H_2 }h_\mathbf{t}\left(\mathbf{h}^{-1} \mathbf{g}\right)|f_k (\mathbf{0 } ,\mathbf{h})|^q d\mathbf{h}
 \end{equation} for any $( \mathbf{t}, \mathbf{g})\in \mathscr U  $, by letting $\kappa\rightarrow 0$.

Since $ |f(\boldsymbol {\varepsilon }_k,\cdot)|^q\in L^{\tilde q}(\mathscr H_1\times \mathscr H_2 )$ with ${\tilde q}=1/q>1$ and $   L^{\tilde q}(\mathscr H_1\times \mathscr H_2)$ is reflexive, there exists a
subsequence weakly convergent to some $\widetilde{f} \in L^{\tilde q}(\mathscr H_1\times \mathscr H_2)$
by Banach-Alaoglu theorem. We must have $\widetilde{f}=|f(\mathbf{0 },\cdot)|^q $ by the continuity of $f$ on $\overline{\mathscr U} $. Taking limit in
\eqref{eq:v<hat-v}, we get the inequality \eqref{eq:heat-rep-p}.
  \qed
\vskip 3mm
{\bf Funding}.  The first author is  partially  supported by National Nature Science
Foundation
  of China  (Nos. 12171221, 12071197) and Natural Science Foundation of Shandong Province
(Nos. ZR2021MA031, 2020KJI002); the second author is  partially  supported by National Nature Science
Foundation of China (No.
11971425).

\end{document}